\newcommand{\CC}{\mathbb{C}}
\newcommand{\ZZ}{\mathbb{Z}}
\newcommand{\eqA}{\mathscr{A}}
\newcommand{\eqK}{\mathscr{K}}
\newcommand{\OO}{\mathcal{O}}
\newcommand{\CCzero}[1]{(\mathbb{C}^{#1},0)}
\newcommand{\CCS}[1]{(\mathbb{C}^{#1},S)}
\newcommand{\GS}[2]{(\mathbb{C}^{#1},S)\rightarrow(\mathbb{C}^{#2},0)}
\newcommand{\Gzero}[2]{(\mathbb{C}^{#1},0)\rightarrow(\mathbb{C}^{#2},0)}
\newcommand{\rank}{\textnormal{rank}}
\newcommand{\Alt}{\textnormal{Alt}}
\def\im{\operatorname{im }}
\def\dim{\operatorname{dim}}
\def\rank{\operatorname{rank}}
\def\im{\operatorname{Im}}
\newcommand{\mdash}{\nobreakdash-\hspace{0pt}}
 \newcommand{\medpar}[1]{\scalerel*{(}{\strut}#1\scalerel{)}{\strut}}
\theoremstyle{plain}
\newtheorem{theorem}{Theorem}[section]
\newtheorem{lemma}[theorem]{Lemma}
\newtheorem{corollary}[theorem]{Corollary}
\newtheorem{proposition}[theorem]{Proposition}
\theoremstyle{definition}
\newtheorem{definition}[theorem]{Definition}
\newtheorem{conjecture}[theorem]{Conjecture}
\newtheorem{example}[theorem]{Example}
\theoremstyle{remark}
\newtheorem*{remark}{Remark}
\newtheorem*{note}{Note}
\begin{document}

\author{R. Giménez Conejero and
J.J.~Nu\~no-Ballesteros}

\title[Singularities of mappings on ICIS]
{Singularities of mappings on ICIS  and applications to Whitney equisingularity}

\address{
Alfr\'ed R\'enyi Institute of Mathematics, Re\'altanoda utca 13-15,
H-1053 Budapest, 
Hungary
}
\email{Roberto.Gimenez@uv.es}
\address{Departament de Matem\`atiques,
Universitat de Val\`encia, Campus de Burjassot, 46100 Burjassot
SPAIN. \newline Departamento de Matemática, Universidade Federal da Paraíba
		CEP 58051-900, João Pessoa - PB, BRAZIL}
\email{Juan.Nuno@uv.es}

\thanks{The first named author has been partially supported by MCIU Grant FPU16/03844. Grant PGC2018-094889-B-100 funded by MCIN/AEI/ 10.13039/501100011033 and by ``ERDF A way of making Europe''.}

\subjclass[2000]{Primary 58K15; Secondary 32S30, 58K40} \keywords{Image Milnor number, Double point Milnor number, Whitney equisingularity}
\begin{abstract} 
We study germs of analytic maps $f:(X,S)\to(\CC^p,0)$, when $X$ is an \textsc{icis} of dimension $n<p$. We define an image Milnor number, generalizing Mond's definition, $\mu_I(X,f)$ and give results known for the smooth case such as the conservation of this quantity by deformations. We also use this to characterise the Whitney equisingularity of families of corank one map germs $f_t\colon(\CC^n,S)\to(\CC^{n+1},0)$ with isolated instabilities in terms of the constancy of the $\mu_I^*$-sequences of $f_t$ and the projections $\pi\colon D^2(f_t)\to\CC^n$, where $D^2(f_t)$ is the \textsc{icis} given by double point space of $f_t$ in $\CC^n\times\CC^n$. The $\mu_I^*$-sequence of a map germ consist of the image Milnor number of the map germ and all its successive transverse slices.
\end{abstract}

\maketitle
\section{Introduction}

The singularities of complex analytic mappings $f\colon X\to Y$ are well understood in general when $X$ and $Y$ are complex manifolds.
They are described by the $\eqA$-equivalence classes of holomorphic map germs $f\colon(\CC^n,S)\to(\CC^{p},0)$. On one hand, the classical Thom-Mather theory provides infinitesimal methods to characterize notions such as stability, finite determinacy, versality, etc. in terms of some algebraic invariants. On the other hand, a more recent approach pioneered by Mond, Damon, Gaffney, among others, is based on techniques of  deformation theory, in a similar way as it is done in the case of singularities of complex analytic spaces. A modern reference for both theories is the recent book \cite{Mond-Nuno2020}.

Basically, if $f\colon(\CC^n,S)\to(\CC^{p},0)$ is $\eqA$-finite, then it has isolated instability, by the Mather-Gaffney criterion. If, in addition, $(n,p)$ are nice dimensions or $f$ has corank one, then we can take a stable perturbation $f_s$ which plays the role of the Milnor fiber in the case of a hypersurface with isolated singularity. The analytic and topological invariants of $f_s$ are in fact invariants of the map germ $f$ which include, for instance, the discriminant Milnor number $\mu_\Delta(f)$ when $n\ge p$ or the image Milnor number $\mu_I(f)$ when $p=n+1$. 

A natural question is what happens when we consider mappings $f\colon X\to Y$ and allow $X$ and $Y$ to have also singularities themselves. Since we work locally, we can assume that $(Y,y)$ is embedded in $(\CC^p,0)$, so we consider the case of map germs $f\colon(X,S)\to(\CC^{p},0)$, where $X$ is a complex analytic space. In \cite{Mond1994} Mond and Montaldi developed the Thom-Mather theory 
of pairs $(X,f)$, where $X$ is a complete intersection with isolated singularities (\textsc{icis}) and $f\colon(X,S)\to(\CC^{p},0)$ is a complex analytic map germ.
They also studied deformations of such pairs, mainly in the case that $n=\dim X\ge p$. In particular, they showed that if $(n,p)$ are nice dimensions, then the discriminant Milnor number $\mu_\Delta(X,f)$ is greater than or equal to the $\eqA_e$-codimension, with equality in the weighted homogeneous case. This extended a well known theorem by Damon-Mond in the case that $X$ is smooth (see \cite{Damon1991a}).

In this paper, we are interested in deformations of pairs $(X,f)$, with $n=\dim X< p$, mainly in the case $p=n+1$. When $(n,p)$ are nice dimensions or when $f$ has corank one, a stable perturbation of $(X,f)$ is a pair $(X_s,f_s)$, where $X_s$ is a smoothing of $X$ and $f_s\colon X_s\to\CC^p$ is stable in the usual sense. If $p=n+1$, the image $f_s(X_s)$ has the homotopy type of a wedge of spheres and the number of such spheres is the image Milnor number $\mu_I(X,f)$. A celebrated conjecture by Mond says that $\mu_I(X,f)$ is greater than or equal to the $\eqA_e$-codimension, with equality in the weighted homogeneous case. The conjecture was originally stated by Mond in \cite{Mond1991} when $X$ is smooth and is known to be true when $X$ is smooth and $n=1,2$ (see \cite{Mond1991,Mond1995}) or when $X$ is a plane curve (see \cite{Ament2017}). But in all other cases the conjecture remains open.

When $p>n$, the homology of the image of a stable perturbation $f_s$ can be described in terms of the alternating homology of the multiple point spaces, by means of the image computing spectral sequence \textsc{icss}. This technique was introduced by Goryunov and Mond in \cite{Goryunov1993} in the case that $X$ is smooth and has been developed later by other authors (see \cite{CisnerosMolina2019,Goryunov1995,Houston2007}). Here we show how to adapt the \textsc{icss} to the case that $X$ is an \textsc{icis}. Regarding this, we also study the case of germs of any corank in \cref{sec:3}.

Our original motivation to study deformations of pairs $(X,f)$ is because we were interested in the Whitney equisingularity (\textsc{we}) of families of map germs $f_t\colon(\CC^n,0)\to(\CC^p,0)$. In \cite{Gaffney1993}, Gaffney proved a very general theorem: the family is \textsc{we} if, and only if, it is excellent (in Gaffney's sense) and all the polar multiplicities in the source and target of $f_t$ are constant on $t$. There are two problematic points in this theorem. The first one is that for each $d$-dimensional stratum of the stratification by stable types in the source, or target, we need $d+1$ invariants, so the total number of invariants we need to control the \textsc{we} is huge. Hence, the main question is to find a minimal set of invariants whose constancy is equivalent to the \textsc{we}.

The second problem is that, in general, it is not easy to check whether the family $f_t$ is excellent or not. Roughly speaking, $f_t$ is excellent if
there is no coalescence of instabilities nor $0$-stable type singularities in the family. The crucial point here is to find a numerical invariant (associated to each member $f_t$ of the family), whose constancy implies that the family is excellent.

In a recent paper, \cite{GimenezConejero2021}, we solved the second question for families of multi-germs $f_t\colon(\CC^n,S)\to(\CC^{n+1},0)$ of corank one. We prove that the family is excellent if the image Milnor number $\mu_I(f_t)$ is constant on $t$, which solved a conjecture posed by Houston in these dimensions (see \cite[Conjecture 6.2]{Houston2010}).
With respect to the first question, some partial results can be found in the papers by 
Jorge Pérez and Saia (see \cite{JorgeSaia2006}) or Houston (see \cite{Houston2011}).

In the last part of this paper we provide a minimal set of invariants which control the \textsc{we} of families $f_t\colon(\CC^n,S)\to(\CC^{n+1},0)$ of corank one. We follow the approach of Teissier in \cite{Teissier1982} for hypersurfaces with isolated singularity or Gaffney in \cite{Gaffney1993} for {\sc icis}, based on the $\mu^*$-sequence. In our case, the \textsc{we} in the target is controlled by the $\mu_I^*$-sequence of $f$, obtained by taking successive transverse slices of the map germ $f$. However, in order to control the \textsc{we} in the source, we make use of the $\mu_I^*$-sequence of $\pi\colon D^2(f)\to(\CC^n,S)$, where $D^2(f)$ is the double point space of $f$ in $\CC^n\times\CC^n$ and $\pi$ is the projection onto the first component. We note that, since $f$ has corank one, $D^2(f)$ is an {\sc icis} of dimension $n-1$ and $\pi$ has isolated instabilities. For this reason, we need to develop the deformation theory of pairs $(X,f)$, where $X$ is an {\sc icis}.

Our main result about \textsc{we} is presented in Section 6 (see \cref{equisingularidad}). We show that a family $f_t\colon(\CC^n,S)\to(\CC^{n+1},0)$ of corank one is \textsc{we} if, and only if, the sequences $\mu_I^*(f_t)$ and $\tilde{\mu}_I^*\medpar{D^2(f_t),\pi}$ are constant on $t$. For a pair $(X,f)$, the sequence 
$\mu_I^*(X,f)$ is the sequence of image Milnor numbers of the successive transverse slices of $(X,f)$, starting with $\mu_I(X,f)$ (see \cref{sequences}) and we use the notation $\tilde{\mu}_I^*(X,f)$ when we omit the first term $\mu_I(X,f)$ in the sequence. As a byproduct, we also deduce that $f_t$ is \textsc{we} in the target if, and only if, the sequence $\mu_I^*(f_t)$ is constant on $t$.

To prove this result, we also prove in \cref{sec:Le-Greuel} a version of a Lê-Greuel formula for map germs on \textsc{icis}, which generalizes a result of \cite{Nuno-Ballesteros2019}. This gives the link between Gaffney's polar invariants, the image Milnor numbers and $\mu_I^*$-sequence.
Furthermore, we define a new invariant in the source, the double point Milnor number $\mu_D$, that coincides with the image Milnor number of the projection $\pi:D^2(f)\to\CC^n$ when $f$ has corank one (see \cref{Df}).
\newline

\emph{Acknowledgements:}
The authors thank the anonymous referee for the careful reading and valuable suggestions.

%
%
%

\section{Map germs with an \textsc{icis} in the source}

In \cite{Mond1994} Mond and Montaldi developed the Thom-Mather theory 
of singularities of mappings defined on an isolated complete intersection singularity (\textsc{icis}). They also extended Damon's results in \cite{Damon1991a}, which related the $\eqA_e$-versal unfolding of a map germ $f$ with the $\eqK_{D(G)}$-versal unfoldings of an associated map germ which induces $f$ from a stable map $G$. In particular, when the target has greater dimension than the source or both dimensions coincide, they proved that the discriminant Milnor number $\mu_\Delta(X,f)$ is greater than or equal to the $\eqA_e$-codimension, with equality in the weighted homogeneous case. This is a generalisation of a theorem of Damon and Mond in the case of mappings between smooth manifolds (cf. \cite{Damon1991}). Here, we study what happens when the dimension of the source is one less than the dimension of the target and we consider the image Milnor number $\mu_I(f)$ instead of $\mu_\Delta(f)$.
\newline

First of all, we fix a bit of notation to get rid of some details. Along the text $(X,S)$ will be a multi-germ of \textsc{icis} and $f\colon(X,S)\rightarrow(\CC^p,0)$ will be a holomorphic map germ, written also as $(X,f)$. This kind of germs are called \textit{germs on \textsc{icis}} as well, and we may omit the base set of the germ if it does not provide relevant information or it is clear from the context.

\begin{definition}[see {\cite[p. 4]{Mond1994}}]\label{def:critset}
We will say that $x\in X$ is a \textit{critical point} of $(X,f)$ if either $X$ is smooth at $x$ and $f$ is not submersive at $x$ or if $x$ is a singular point of $X$. Besides, we will denote the set of critical points by $\Sigma(X,f)$, in a similar fashion as in the case of a smooth source. Furthermore, we will say that $(X,f)$ has \textit{finite singularity type} if the restriction of $f$ to $\Sigma(X,f)$ is finite-to-one.
\end{definition}



To be precise, we are not interested in map germs of the form $f:(X,S)\rightarrow (\CC^p,0)$, but in equivalence classes of these maps by a certain relation. This is very common in the study of map germs, and in this text we focus on its analytic structure or, in other words, we study map germs modulo change of coordinates in source and target.

\begin{definition}\label{aeq}
Two map germs $f,g:(X,S)\rightarrow (\CC^p,0)$ are $\eqA$\textit{-equivalent} if there are germs of biholomorphisms $\phi$ of $(X,S)$ and $\psi$ of $(\CC^p,0)$ such that the following diagram is commutative:
$$\begin{tikzcd}
 (X,S) \arrow[r, "f" ]\arrow[d, "\sim"  {anchor= north, rotate=90, inner sep=.6mm}," \phi"']& (\CC^p,0)\arrow[d, " \psi","\sim"'  {anchor= south, rotate=90, inner sep=.3mm}] \\
 (X,S) \arrow[r, "g" ]& (\CC^p,0)
\end{tikzcd}.$$

\end{definition}

There is developed a huge mathematical machinery concerning $\eqA$\mdash  equivalence, also known as Thom-Mather theory (see, for example, \cite{Mond-Nuno2020} or \cite{Wall1981}). For example, to study the singularities that appear \textsl{near to a given germ} (in the sense of perturbations of map germs) we mainly use unfoldings. We recall the following definitions that generalize the smooth case.

\begin{definition}[cf. {\cite[Definition 1]{Mond1994}}] Let $f\colon\left(X,S\right)\rightarrow\left(\CC^p,0\right)$.
\begin{enumerate}[label={(\roman*)},font=\itshape]
	\item An \textit{unfolding of the pair }$\left(X,f\right)$ \textit{over a smooth space germ} $(W,0)$ is a map germ $F\colon(\mathcal{X},S')\rightarrow \left(\CC^p\times W,0\right)$ together with a flat projection $\pi\colon(\mathcal{X},S')\rightarrow (W,0)$ and an isomorphism $j\colon(X,S)\to\big(\pi^{-1}(0),S'\big)$  such that the following diagram commutes
$$ \begin{tikzcd}[column sep=tiny]
&(X,S) \arrow[dr, "f\times\left\{0\right\}" ] \arrow[dl,  "j" ' ]&  \\
\medpar{\pi^{-1}(0),j(S)} \arrow[d,hook] & &\big(\CC^p\times\left\{0\right\}, 0\big) \arrow[d,hook]\\
\medpar{\mathcal{X},j(S)}\arrow[rr, "F"] \arrow[dr,"\pi" '] && (\CC^p\times W,0) \arrow[dl,"\pi_2"]\\
&(W,0)&
\end{tikzcd} ,$$
where $\pi_2:\CC^p\times W\rightarrow W$ is the Cartesian projection. In this case, $W$ is the \textit{parameter space of the unfolding}, and in general we use $(\CC^d,0)$ instead of $(W,0)$. In short, we will use also $(\mathcal{X},\pi,F,j)$ to denote the unfolding. 

\item Given an unfolding $(\mathcal{X},\pi,F,j)$ of $(X,f)$, the map $f_t:X_t\rightarrow \CC^p$ induced from $F$ on $X_t\coloneqq \pi^{-1}(t)$ is called the \textit{perturbation of }$(X,f)$\textit{ induced by the unfolding}, and is abbreviated to the pair $(X_t,f_t)$.

\item In this context, an \textit{unfolding of $f$} is an unfolding of $(X,f)$ with $\mathcal{X}=X\times\CC^d$ and with $\pi:\mathcal{X}\rightarrow \CC^d$ the Cartesian projection. This coincides with the usual definition for smooth spaces.

\item Two unfoldings $(\mathcal{X},\pi,F,j)$ and $(\mathcal{X}',\pi',F',j')$ over $W$ are \textit{isomorphic} if there are isomorphisms $\Phi:\mathcal{X}\rightarrow\mathcal{X}'$ and $ \Psi: \CC^p\times \CC^d \rightarrow \CC^p \times \CC^d$ such that $\Psi$ is an unfolding of the identity over $\CC^d$ and the following diagram commutes:
$$\begin{tikzcd}[column sep=0.4cm]
 & \medpar{\mathcal{X},j(S)} \arrow[dd," \Phi"',"\sim"  {anchor= north, rotate=90, inner sep=.6mm}] \arrow[rr, "F"]\arrow[dr, "\pi"]& & (\CC^p\times \CC^d,0)\arrow[dd," \Psi","\sim"'  {anchor= south, rotate=90, inner sep=.3mm}] \arrow[dl, "\pi_2"']\\
(X,S) \arrow[ur, "j"] \arrow[dr, "j'"'] & & (\CC^d,0) & \\
 & \medpar{\mathcal{X}',j'(S)} \arrow[rr, "F'"] \arrow[ur, "\pi'"]& & (\CC^p\times \CC^d,0)\arrow[ul, "\pi_2"']\\
\end{tikzcd}.$$

\item If $(\mathcal{X},\pi,F,j)$ is an unfolding of $(X,f)$ over $(\CC^d,0)$, a germ $\rho: (\CC^{r},0) \rightarrow (\CC^d,0)$ induces and unfolding $(\mathcal{X}_\rho,\pi_\rho,F_\rho,j_\rho)$ of $(X,f)$ by a \textit{base change} or, in other words, by the fibre product of $F$ and $\text{id}_{\CC^p}\times \rho$:
$$\begin{tikzcd}
\mathcal{X}_\rho\coloneqq \mathcal{X}\times_{\CC^p\times \CC^d}\left(\CC^p\times \CC^s\right)\arrow[r,"F_\rho"]\arrow[d]&\CC^p\times \CC^s\arrow[d,"\text{id}_{\CC^p}\times \rho"]\\
\mathcal{X}\arrow[r,"F"]&\CC^p\times \CC^d
\end{tikzcd},$$
where we omit the points of the germs for simplicity. 

\item The unfolding $(\mathcal{X},\pi,F,j)$ is \textit{versal} if every other unfolding, for example $(\mathcal{X}',\pi',F',j')$, is isomorphic to an unfolding induced from the former by a base change, $(\mathcal{X}_\rho, \pi_\rho, F_\rho, j_\rho)$. A versal unfolding is called \textit{miniversal} if it has a parameter space with minimal dimension.
\end{enumerate}
\end{definition}

As we were saying, an unfolding shows information about the deformations of the germ it unfolds. Roughly speaking, a versal unfolding shows all the possible information an unfolding can show, and if it is miniversal then the information of the unfolding is found without redundancies. On the other hand, there is a type of unfolding that does not contain relevant information: a trivial unfolding.

\begin{definition}\label{stable}
A \textit{trivial unfolding} of a map germ $f$ is an unfolding that is isomorphic to the constant unfolding $(X\times\CC^d,\pi_2,f\times\textnormal{id}_{\CC^d},i)$, 
$$ \begin{tikzcd}[column sep=tiny]
&(X,S) \arrow[dr, "f\times\left\{0\right\}" ] \arrow[dl, hook, "i" ' ]&  \\
\big(X\times\CC^d,S\times\{0\}\big)\arrow[rr, "f\times\textnormal{id}_{\CC^d}"] \arrow[dr,"\pi_2" '] && (\CC^p\times \CC^d,0) \arrow[dl,"\pi_2"]\\
&(\CC^d,0)&
\end{tikzcd} ,$$
where $\pi_2$ is the projection on the second factor and $i$ is the inclusion $(X,S)\hookrightarrow \big(X\times\CC^d,S\times\{0\}\big)$.

On this regard, a map germ is \textit{stable} if every unfolding is trivial. If the map germ is not stable, we say that it has an \textit{instability} or that it is \textit{unstable}.
\end{definition}

Following the idea that an unfolding shows information about the perturbations of a germ, it is evident from the definition of stability of a map germ that a map germ is stable if, and only if, it is its own miniversal unfolding. Another way of seeing this is that every deformation is $\eqA$\mdash equivalent to the original map germ if, and only if, it is stable. As a consequence of this, we see that if a map germ is stable then $(X,S)$ is smooth and $f$ is stable in the usual sense (see \cite[Definition 3.4]{Mond-Nuno2020}). This is related to \cref{sumaAes} below, where the Tjurina number of $(X,0)$ appears.

The concept of \textit{stabilisation} is of high interest in the study of singularities of map germs (for example, see \cite{Damon1991,Mond1991}). Roughly speaking, it is a one parameter unfolding such that every instability lies in the fiber of zero:

\begin{definition}[cf. {\cite[Definition 2]{Mond1994}}]
A stabilisation of a map germ $f:(X,S)\to(\CC^p,0)$ is an unfolding $(\mathcal{X},\pi,F,j)$ such that the parameter space has dimension one and $f_s: X_s\to \CC^p$ has only stable singularities for $s\neq0$, where $f_s$ is the map induced by $F$. 
\end{definition}

How far the map germ $(X,f)$ is of being stable is measured by means of its $\eqA$-codimension:

\begin{definition}
Let $(X,S)\subset(\CC^N,S)$ be a germ of an \textsc{icis} and consider a map germ $f:(X,S)\rightarrow(\CC^p,0)$. We define the following objects:
\begin{enumerate}[label={(\roman*)},font=\itshape]
\item $\theta_{\CC^p,0}$ is the module of vector fields on $(\CC^p,0)$,

\item the module of tangent vector fields defined on $(X,S)$ is
$$\theta_{X,S}\coloneqq\frac{\text{Der}\big(\text{-log}(X,S)\big)}{I(X)\text{Der}\big(\text{-log}(X,S)\big)},$$
where $\text{Der}\big(\text{-log}(X,S)\big)$ are the vector fields on $(\CC^N,S)$ tangent to $(X,S)$,

\item $\theta(f)$ is the module of vector fields along $f$,

\item $\omega f:\theta_{\CC^p,0}\rightarrow \theta(f)$ is the composition with $f$, and

\item $tf:\theta_{X,S}\rightarrow \theta(f)$ is the composition with the differential of a smooth extension of $f$.
\end{enumerate}

Then, the $\mathcal{O}_{\CC^p,0}$-module
$$ N\eqA_e f \coloneqq \frac{\theta(f)}{tf(\theta_{X,S})+\omega f(\theta_{\CC^p,0})} $$
is the $\eqA_e$-\textit{normal space} and its dimension as vector space is the $\eqA_e$\textit{-codimension of} $f$. As usual, we will say that $f$ is $\eqA$-finite if this dimension is finite.

In contrast, the $\eqA_e$\textit{-codimension of the pair }$(X,f)$ is the dimension of the parameter space of a miniversal unfolding of the pair $(X,f)$, if it exists, and it is infinite otherwise. If the $\eqA$-codimension of $(X,f)$ is finite, we say that $(X,f)$ is $\eqA$\mdash finite. 
%
\end{definition}

It is reasonable to ask for the relation between the $\eqA_e$-codimension of $(X,f)$, the $\eqA_e$-codimension of $f$ and the Tjurina number of $X$. This is addressed in \cite[Theorem 1.4]{Mond1994} for the case of mono-germs, i.e., when $S$ is a point:

\begin{theorem}\label{sumaAes}
Let $(X,0)$ be an \textsc{icis} and $f:(X,0)\rightarrow (\CC^p,0)$ of finite singularity type, then $f$ is $\eqA$\mdash finite if, and only if, $(X,f)$ is $\eqA$\mdash finite. Furthermore, in this case,
$$\eqA_e\textnormal{-codim}(X,f)=\eqA_e\textnormal{-codim}(f)+\tau(X,0).$$
\end{theorem}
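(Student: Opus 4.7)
The plan is to exhibit a miniversal unfolding of the pair $(X,f)$ by combining a miniversal deformation of the \textsc{icis} $X$ with a miniversal $\eqA$-unfolding of $f$ on the total space of that deformation. The dimension count then reads off the formula, and both implications of the biconditional follow from finiteness of $\tau(X,0)$.

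First I would prove the biconditional. Since $(X,0)$ is an \textsc{icis}, $\tau(X,0)<\infty$ and there exists a miniversal (flat) deformation $\pi\colon(\mathcal{Y},0)\to(\CC^{\tau(X,0)},0)$ with $\pi^{-1}(0)\cong X$. Suppose $f$ is $\eqA$-finite and fix a miniversal $\eqA$-unfolding $F\colon X\times(\CC^{d},0)\to(\CC^p\times\CC^{d},0)$ of $f$, where $d=\eqA_e\text{-}\codim(f)$. Because $f$ has finite singularity type, a standard application of flatness together with the fact that $X\hookrightarrow\mathcal{Y}$ is a regular embedding allows one to extend $f$ to a map germ $\widetilde f\colon(\mathcal{Y},0)\to(\CC^p,0)$; this is the lifting step used in \cite{Mond1994}. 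Applying the same unfolding construction to $\widetilde f$ as a map on the \textsc{icis} $\mathcal{Y}$, one obtains an unfolding of the pair $(X,f)$ with parameter space $(\CC^{\tau(X,0)+d},0)$. Conversely, if $(X,f)$ is $\eqA$-finite, one restricts a miniversal unfolding of the pair to the locus where the source is locally isomorphic to $X$ (this locus is cut out by the miniversal deformation of $X$), and the induced unfolding of $f$ is versal, hence $\eqA_e\text{-}\codim(f)<\infty$.

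To prove versality of the constructed unfolding and obtain the formula, I would use the infinitesimal criterion based on the $\eqA_e$-normal space. The key point is to establish a short exact sequence of the form
\[
0\longrightarrow N\eqA_e f\longrightarrow N\eqA_e(X,f)\longrightarrow T^1(X,0)\longrightarrow 0,
\]
where $T^1(X,0)$ is the space of infinitesimal deformations of the \textsc{icis} $(X,0)$, of dimension $\tau(X,0)$. The left map sends an unfolding of $f$ over $(\CC,0)$ (fixing $X$) to the corresponding first-order unfolding of the pair; the right map sends an unfolding of the pair to the first-order deformation of the source induced by it. Exactness at the middle term is precisely the statement that an infinitesimal unfolding of the pair with trivial induced deformation of $X$ is an unfolding of $f$ itself; surjectivity at the right follows from lifting every first-order deformation of $X$ to one of the pair by extending $f$ as above. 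Taking dimensions gives $\eqA_e\text{-}\codim(X,f)=\eqA_e\text{-}\codim(f)+\tau(X,0)$.

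The main obstacle is twofold. First, one must construct the lift $\widetilde f$ of $f$ to the total space of the miniversal deformation of $X$; this rests on the finite singularity type hypothesis and on the local structure of \textsc{icis}'s as complete intersections in $\CC^N$, and it is where the hypothesis really bites. Second, one must check that the combined unfolding satisfies the versality criterion, i.e.\ that the appropriate Kodaira--Spencer-type map is surjective onto $N\eqA_e(X,f)$. Both are addressed in the proof of Theorem~1.4 of \cite{Mond1994}, and the argument above is essentially a reorganisation of their tangent-space calculation in the language of the exact sequence displayed above.
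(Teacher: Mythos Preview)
The paper does not give its own proof of this theorem; it merely states the result and attributes it to \cite[Theorem 1.4]{Mond1994}. There is therefore nothing in the paper to compare your proposal against. What you have written is an attempt to reconstruct the Mond--Montaldi argument, and you acknowledge as much in your final paragraph.

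As a sketch of that argument your outline is reasonable: combining a miniversal deformation of the {\sc icis} $X$ with a miniversal $\eqA$-unfolding of $f$, and reading off the dimension count from a short exact sequence linking $N\eqA_e f$, the infinitesimal deformations of the pair, and $T^1(X,0)$, is indeed the shape of the proof. One small inaccuracy worth flagging: the extension of $f$ to the total space $\mathcal{Y}$ of the miniversal deformation of $X$ does not require the finite singularity type hypothesis. The total space of a miniversal deformation of an {\sc icis} is smooth, so extending $f$ is immediate (extend first to the ambient $\CC^N$). The finite singularity type hypothesis enters elsewhere, chiefly to guarantee that the relevant normal modules are coherent and that the tangent-space computations behave well, not in the lifting step you single out.
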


\begin{note}
With this result we see clearly that if a map germ has smooth source and it is stable in the usual sense then it is stable (and vice versa).
\end{note}

This theorem allows us to prove a very useful result, the \textit{Mather-Gaffney criterion} for germs with an \textsc{icis} in the source.

\begin{proposition}\label{MGcriterion}
A map germ $f:(X,S)\rightarrow(\CC^p,0)$ is $\eqA$\mdash finite if, and only if, it has isolated instability.
\end{proposition}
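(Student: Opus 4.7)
The plan is to reduce to the classical Mather--Gaffney criterion for map germs from smooth sources, using \cref{sumaAes} as a bridge. First I would establish the multi-germ analogue
\[
\eqA_e\text{-codim}(X,f) = \eqA_e\text{-codim}(f) + \sum_{s\in S}\tau(X,s),
\]
which follows from localising at each branch and summing, since both the $\eqA_e$-normal space and the unfolding theory split additively over the finitely many points of $S$. Consequently, $(X,f)$ is $\eqA$-finite if and only if $f$ itself is $\eqA$-finite and every branch $(X,s)$ has finite Tjurina number. The latter is automatic because $(X,S)$ is an \textsc{icis}, so the content of the criterion is to relate $\eqA$-finiteness of $f$ to isolated instability.

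For the forward direction, assume $(X,f)$ is $\eqA$-finite. Finite Tjurina at each $s\in S$ implies that on a small representative the space $X$ is smooth at every point $x\notin S$. Fix such a representative of $f$ and let $y\neq 0$ in the target, with $T=f^{-1}(y)$. The multi-germ of $f$ at $T$ is a map germ on a smooth source, and the stalk of $N\eqA_ef$ at $y$ decomposes as the direct sum of the local $\eqA_e$-normal spaces at the points of $T$. Now $N\eqA_ef$ is finitely generated as an $\OO_{\CC^p,0}$-module via $\omega f$ (a consequence of finite singularity type, which $\eqA$-finiteness implies). Finite generation together with finite $\CC$-dimension of $N\eqA_ef$ forces the module to be concentrated at $0$ on the target, so the stalk at $y$ vanishes, the local multi-germ has vanishing $\eqA_e$-codimension, and hence it is stable. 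Thus $f$ has isolated instability.

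For the converse, suppose $f$ has isolated instability and choose a representative in which every multi-germ over $y\neq 0$ is stable. Each $\tau(X,s)$ is finite since $(X,s)$ is an \textsc{icis}. To see $f$ is $\eqA$-finite, note that $N\eqA_ef$ is finitely generated over $\OO_{\CC^p,0}$ and, by stability outside $S$, supported inside $\{0\}$; finite generation plus support at a point gives $\dim_{\CC}N\eqA_ef<\infty$. Then the displayed formula above shows $\eqA_e\text{-codim}(X,f)<\infty$, i.e.\ $(X,f)$ is $\eqA$-finite.

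The main obstacle is verifying finite generation of $N\eqA_ef$ over $\OO_{\CC^p,0}$ in the \textsc{icis} setting, which requires checking that $\theta_{X,S}$ and the homomorphism $tf$ behave coherently at smooth points away from $S$ so that the classical preparation-theorem argument carries over; once this is in place, the rest is a routine adaptation of the smooth-source proof together with the additive reduction supplied by \cref{sumaAes}.
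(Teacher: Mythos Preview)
Your proposal is correct and follows essentially the same route as the paper: both reduce the question to the classical smooth-source Mather--Gaffney criterion by observing that the singular locus of the \textsc{icis} $X$ is isolated and then invoking \cref{sumaAes} to identify $\eqA$-finiteness of $(X,f)$ with $\eqA$-finiteness of $f$. The only difference is one of detail: where the paper simply cites the classical criterion, you unpack the underlying support-and-coherence argument for $N\eqA_e f$ explicitly, which is fine but not necessary.
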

\begin{proof}
The instabilities could come from points where $X$ is not smooth, which are isolated. On the other hand, on the smooth points we have the usual Mather-Gaffney criterion (see, for example, \cite[Theorem 4.5]{Mond-Nuno2020}), therefore, these points are isolated as well. Using \cref{sumaAes} finishes the proof.\qed\newline
\end{proof}

As usual, we would like to relate those algebraic $\eqA$\mdash invariants with some invariants with a topological flavour. We study what happens when the dimension of the target is greater than the dimension of the source, especially when the difference is $1$. The image Milnor number $\mu_I(f)$ of $f\colon\CCS{n}\rightarrow\CCzero{n+1}$ was introduced by Mond: the image of a stable perturbation has the homotopy type of a wedge of $n$-spheres and the number of such spheres is $\mu_I(f)$ (see \cite[Theorem 1.4]{Mond1991}). Of course, one expects that, in the case where the source is an \textsc{icis}, the situation is similar. There is a hypothesis that simplifies many arguments because it gives extra structure to the objects we study, the corank one hypothesis:

\begin{definition}
We say that $f:(X,S)\rightarrow(\CC^p,0)$ has \textit{corank }$r$ if it has a smooth extension of corank $r$.
\end{definition}

Indeed, Goryunov proved that the image of a stable perturbation of $f:(X,0)\rightarrow (\CC^p,0)$ has non-trivial homology only in certain degrees if $n<p$ and $f$ has corank one, see \cite[Theorem 3.3.1]{Goryunov1995}. Furthermore, in \cite[p. 13]{Mond1994}, Mond and Montaldi proved that, for a map germ $f:(X,0)\to(\CC^p,0)$,  the discriminant locus of a stable perturbation has the homotopy type of a wedge of spheres if $\dim X=n\geq p$, but the same proof works when $p=n+1$ . For the sake of completeness and because we will take this technique a bit further, we outline a proof when $p=n+1$ based on the result of Siersma showed below (see also the previous work of L\^{e} D{\~{u}}ng Tr\'{a}ng in \cite{Trang1987,Trang1992}).

Consider first $g:\CCzero{N+1}\rightarrow\CCzero{}$ and allow isolated and non-isolated singularities. Recall the Milnor fibration: for $\epsilon,\eta$ small enough
$$ g:g^{-1}(D_\eta)\cap B_\epsilon\rightarrow D_\eta$$
 is a locally trivial fibre bundle over $D_\eta\setminus\left\{0\right\}$. Also, consider an unfolding of $g$, $G:(\CC^{N+1}\times\CC^{r},0)\rightarrow(\CC\times\CC^{r},0)$ with $G(x,u)=\left(g_u(x),u\right)$.

\begin{definition}[see {\cite[p. 2]{Siersma1991}}]\label{deftopsiersma}
We say that the unfolding $G$ is \emph{topologically trivial over the Milnor sphere $S_\epsilon$} if, for $\eta$ and $\rho$ small enough,
\begin{equation}\label{top-triv-sphere}
\begin{tikzcd}
(S_\epsilon \times \mathring{B}_\rho) \cap G^{-1}(\mathring{D}_\eta) \arrow[r, "G"]& \mathring{D}_\eta\times \mathring{B}_\rho 
\end{tikzcd}
\end{equation}
is a stratified submersion with strata $\left\{0\right\}\times \mathring{B}_\rho$ and $(\mathring{D}_\eta \setminus \left\{0\right\})\times \mathring{B}_\rho$ on $\mathring{D}_\eta\times \mathring{B}_\rho $ and the induced stratification on $(S_\epsilon \times \mathring{B}_\rho) \cap G^{-1}(\mathring{D}_\eta)$.
\end{definition}

\begin{theorem}[cf. {\cite[Theorem 2.3]{Siersma1991}}]\label{siersma}
With the notation of \cref{deftopsiersma}, let $G$ be a deformation of $g$ which is topologically trivial over a Milnor sphere. Let $u\in \mathring{B}_\rho$ and suppose that all the fibres of $g_u$ are smooth or have isolated singularities except for one special fibre $X_{u}\coloneqq g_u^{-1}(0)\cap B_\epsilon$. Then $X_u$ is homotopy equivalent to a wedge of spheres of dimension $n$ and its number is the sum of the Milnor numbers over all the fibres different from $X_u$.
\end{theorem}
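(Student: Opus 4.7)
The strategy combines Thom's first isotopy lemma, applied to the topological triviality over $S_\epsilon$, with an Euler characteristic bookkeeping on the fibration $g_u\colon Y_u\to D_\eta$, and finally a connectivity argument upgrading the numerical identity to the full homotopy type. Throughout, write $Y_u\coloneqq g_u^{-1}(D_\eta)\cap B_\epsilon$, let $v_0=0,v_1,\dots,v_k$ be the critical values of $g_u$ inside $D_\eta$, and denote by $\mu_j$ the total Milnor number of the fibre $Y_{v_j}\coloneqq g_u^{-1}(v_j)\cap B_\epsilon$ for $j\geq 1$.

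First, applying Thom's first isotopy lemma to the stratified submersion \eqref{top-triv-sphere} yields, for each $u\in\mathring{B}_\rho$, a stratified diffeomorphism $S_\epsilon\cap g_u^{-1}(D_\eta)\cong S_\epsilon\cap g^{-1}(D_\eta)$ commuting with the projections to $D_\eta$; in particular, the link $\partial X_u\coloneqq X_u\cap S_\epsilon$ is diffeomorphic to the link of $g^{-1}(0)$. Extending this boundary trivialization into a collar neighbourhood of $\partial Y_u$ and using that $Y_u$ is a Stein domain, I would upgrade it to a homotopy equivalence $Y_u\simeq g^{-1}(D_\eta)\cap B_\epsilon$. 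Because the latter tube deformation retracts onto the contractible germ $g^{-1}(0)\cap B_\epsilon$ by the conic structure theorem for analytic sets, one obtains $\chi(Y_u)=1$.

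Next, I would perform an Euler characteristic bookkeeping through $g_u\colon Y_u\to D_\eta$. Choose disjoint small disks $D'_j\subset D_\eta$ around each $v_j$: then $g_u^{-1}(D'_j)\cap B_\epsilon$ deformation retracts onto $Y_{v_j}$; the trunk $g_u^{-1}\big(D_\eta\setminus\bigsqcup_j D'_j\big)\cap B_\epsilon$ is a locally trivial bundle with fibre the generic fibre $F$ over a disk with $k+1$ open disks removed; and each overlap $g_u^{-1}(\partial D'_j)\cap B_\epsilon$ is an $F$-bundle over a circle, hence has vanishing Euler characteristic. For $j\geq 1$, the classical Milnor fibre computation at the isolated singular points of $Y_{v_j}$ yields $\chi(Y_{v_j})=\chi(F)-(-1)^n\mu_j$. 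Inclusion--exclusion then gives
\[
1=\chi(Y_u)=\chi(X_u)-(-1)^n N,\qquad N\coloneqq\sum_{j=1}^k\mu_j,
\]
so $\chi(X_u)=1+(-1)^n N$.

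Finally, to upgrade this Euler characteristic identity to the claimed homotopy type I would show that $X_u$ is $(n-1)$-connected. As an $n$-dimensional Stein analytic space, $X_u$ has the homotopy type of a CW-complex of real dimension at most $n$ by Hamm's theorem, and the $(n-1)$-connectivity is obtained from the connectivity of the link $\partial X_u$ (transferred from $g$ via the first step) together with the cone-like structure of $X_u$ near its singular locus. Hurewicz combined with the Euler characteristic identity then gives $H_n(X_u)\cong\pi_n(X_u)\cong\ZZ^N$, and a map $\bigvee^N S^n\to X_u$ realising a basis of $\pi_n$ is a homology isomorphism between simply connected CW-complexes, hence a homotopy equivalence by Whitehead. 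The main obstacle is the contractibility claim in the first step: the direct gradient flow of $|g_u|^2$ does not retract $Y_u$ onto $X_u$ because of the new critical values $v_j\neq 0$ in the interior of $D_\eta$, so one genuinely needs the stratified triviality over $S_\epsilon$ to compare $Y_u$ with the Milnor tube of the original $g$.
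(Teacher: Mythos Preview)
The paper does not give its own proof of this theorem: it is quoted verbatim from Siersma's paper \cite{Siersma1991} and used as a black box, so there is nothing in the paper to compare your argument against. I will therefore comment on your attempt on its own merits.

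Your Euler characteristic bookkeeping in the second step is fine, and the contractibility of $Y_u$ can indeed be obtained, though your justification is too loose: having the same boundary and being Stein does not force a homotopy equivalence. What the hypothesis actually gives is stronger---the stratified submersion \eqref{top-triv-sphere} lets you lift $\partial/\partial u$ to a vector field on $G^{-1}(\mathring D_\eta\times\mathring B_\rho)\cap(B_\epsilon\times\mathring B_\rho)$ that is tangent to $S_\epsilon\times\mathring B_\rho$, and integrating it yields a homeomorphism $Y_u\cong Y_0$; then $Y_0$ retracts onto the cone $g^{-1}(0)\cap B_\epsilon$.

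The genuine gap is your third step. You try to deduce $(n-1)$-connectivity of $X_u$ from connectivity of the link $\partial X_u\cong g^{-1}(0)\cap S_\epsilon$, but $g$ is explicitly allowed to have non-isolated singularities, so its link need not be highly connected at all (think of $g=xy$ on $\CC^3$). Siersma's route to connectivity is different and does not pass through the link: using the boundary triviality one shows that $g_u^{-1}(D'_0)\cap B_\epsilon$ retracts onto $X_u$ for a small disc $D'_0\ni 0$, and then standard Picard--Lefschetz over the remaining critical values $v_1,\dots,v_k$ (which do have isolated singularities) shows that the contractible $Y_u$ is obtained from $X_u$ by attaching $N=\sum_j\mu_j$ cells of real dimension $n+1$. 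The long exact sequence of the pair $(Y_u,X_u)$ then gives $\tilde H_i(X_u)=0$ for $i\neq n$ and $H_n(X_u)\cong\ZZ^N$ directly, and since attaching $(n{+}1)$-cells does not change $\pi_1$ one gets $\pi_1(X_u)=\pi_1(Y_u)=0$, after which Hurewicz and Whitehead finish the argument as you indicate. So your final paragraph is salvageable, but the input it needs is the cell-attachment description of $(Y_u,X_u)$, not the topology of the link.
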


Finally, we give the main $\eqA$\mdash invariant regarding this kind of germs, the image Milnor number, as Mond did in \cite{Mond1991}. \cref{muICIS} below justifies the definition, which is nothing more than our version of \cite[Theorem 2.4]{Mond1994} (see also \cite[pp. 219--220]{Damon1991}). 

\begin{proposition}\label{muICIS}
Let $f\colon(X,S)\rightarrow (\CC^{n+1},0)$ be $\eqA$-finite, where $X$ is an \textsc{icis} with $\dim(X)=n$. Suppose also that $f$ has corank one or $(n,n+1)$ are nice dimensions in the sense of Mather. In this case, if  $(X_s,f_s)$ is a perturbation given by a stabilisation of $(X,f)$ with $s\neq0$, the image of $f_s$ intersected with a Milnor ball has the homotopy type of a wedge of spheres of dimension $n$.
\end{proposition}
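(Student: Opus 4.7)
The strategy follows Mond's original approach for $X=\CC^n$ \cite{Mond1991}, using \cref{siersma} as the key topological input. Let $(\mathcal{X},\pi,F,j)$ represent the given stabilisation; since $\dim\mathcal{X}=n+1$ and $F$ is finite (by $\mathcal{A}$-finiteness together with \cref{MGcriterion}), the image $\mathrm{Im}(F)$ is a hypersurface in $(\CC^{n+1}\times\CC,0)$. Choose a reduced defining equation $g\colon(\CC^{n+1}\times\CC,0)\to(\CC,0)$, write $g(y,u)=g_u(y)$ so that $g_u^{-1}(0)=\mathrm{Im}(f_u)$ for every $u$, and view $G(y,u)=(g_u(y),u)$ as a deformation of the function germ $g_0$.

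The plan is to apply \cref{siersma} to $g_0$ and $G$ at the non-zero value $s$ of the unfolding parameter, for which two conditions must be verified. (a) For $0<|y|\ll 1$, the fibre $g_s^{-1}(y)\cap B_\epsilon$ has at most isolated singularities. The reducedness of $g$ forces the non-isolated part of $\mathrm{Sing}(g_s)$ to lie on $g_s^{-1}(0)=\mathrm{Im}(f_s)$, since that is where the non-smooth locus of the hypersurface sits, and stability of $f_s$ ensures the additional off-zero critical points of $g_s$ are isolated; hence $g_s^{-1}(0)\cap B_\epsilon=\mathrm{Im}(f_s)\cap B_\epsilon$ is the unique special fibre. (b) The deformation $G$ is topologically trivial over a Milnor sphere $S_\epsilon$ of $g_0$ in the sense of \cref{deftopsiersma}. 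Here one uses that $f$ is $\mathcal{A}$-finite, so by \cref{MGcriterion} $F$ has isolated instability at the origin; outside that point $F$ is locally stable, and a Whitney stratification of $\mathrm{Im}(F)$ compatible with the parameter axis $\{0\}\times\CC$, combined with Thom's second isotopy lemma, shows that \eqref{top-triv-sphere} is a stratified submersion for appropriately chosen radii.

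Granted (a) and (b), \cref{siersma} yields that $\mathrm{Im}(f_s)\cap B_\epsilon=g_s^{-1}(0)\cap B_\epsilon$ is homotopy equivalent to a wedge of $n$-spheres, which is the desired conclusion. The corank-one or nice-dimensions hypothesis enters only through the existence of the stabilisation, which is part of the hypothesis. I expect the main technical obstacle to be step (b): one must control the Whitney stratification of $\mathrm{Im}(F)$ so that it meets $S_\epsilon\times\mathring{B}_\rho$ transversally in the parameter $u$, which in turn requires exploiting the isolated instability of $F$ carefully. Because the ICIS singularity of $X$ lives in the source while Siersma's machinery operates in the target $\CC^{n+1}\times\CC$, the argument is essentially the one used in Mond's smooth-source treatment.
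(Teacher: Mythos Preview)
Your proposal is correct and follows essentially the same route as the paper's own outline: both reduce the claim to \cref{siersma} applied to a reduced defining equation $G$ of $\mathrm{Im}(F)$, using the isolated instability of $(X,f)$ (via \cref{MGcriterion}) to obtain topological triviality of $G$ over a Milnor sphere. You in fact give more detail than the paper does on the hypothesis that the off-zero fibres of $g_s$ have at most isolated singularities; one minor caveat is that the finiteness of $F$ comes from $f$ having finite singularity type (implied by $\mathcal{A}$-finiteness) rather than directly from \cref{MGcriterion}.
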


We used the same techniques to prove a similar result for the smooth case, in \cite[p.4]{GimenezConejero2021}. There are routine technical details of the proof that can be found there. In any case, to prove this, note that the case of $\eqA_e$-codimension equal to $0$ is trivial. Otherwise, $X_s$ is smooth and $f_s$ is stable outside the origin by \cite[Theorem 1.4]{Mond1994}, so we can take a Milnor sphere $S_\epsilon$ such that the image of the stabilisation of the pair $(X,f)$ is topologically trivial, seen as the zero-set of its defining equation $G$. One can conclude applying \cref{siersma} to prove that the stable perturbation has the homotopy type of a wedge of spheres.

\begin{definition}
For $f:(X,S)\rightarrow  (\CC^{n+1},0)$ as in \cref{muICIS}, if $(X_s,f_s)$ is a perturbation given from a stabilisation of $f$, we will say that the \textit{image Milnor number} of $(X,f)$ is the number of spheres, in the homotopy type, of the image of $(X_s,f_s)$ on a Milnor ball, for $s\neq 0$ (see \cref{fig:muICIS1}). This number will be denoted by  $\mu_I(X,f)$.
\end{definition}

\begin{remark}
Actually, an equivalent definition can be given replacing $(X_s,f_s)$ from the stabilisation for any stable $(X_u,f_u)$ from a versal unfolding (or, in general, a stable unfolding) because any stabilisation can be found inside a versal unfolding by means of a base change. This may simplify some arguments or intuitions and we will use both indistinctly.

For the same reason, the definition does not depend on the stabilisation (cf. \cite[p. 12]{Mond1994}). Finally, a stabilisation always exists when $f$ has corank one or $(n,n+1)$ are nice dimensions. In fact, \textit{the bifurcation set} $\mathcal B$ of a versal unfolding $(\mathcal{X},\pi,F,j)$ over $(\CC^d,0)$ is the set germ in $(\CC^d,0)$ of parameters $u$ such that $(X_u,f_u)$ has some instability. It is enough to show that $\mathcal B$ is analytic and proper in $(\CC^d,0)$.

On one hand, we consider the set germ $\mathcal C$ in $(\CC^p\times\CC^d,0)$ of pairs $(y,u)$ such that $(X_u,f_u)$ is unstable at $y$. We fix a small enough representative $F:\mathcal X\to Y\times U$, where $Y$ and $U$ are open neighbourhoods of the origin in $\CC^p$ and $\CC^d$, respectively.
Then $\mathcal C$ is the support of the relative normal module on $Y\times U$, defined as
$$ N\eqA_e (F|U) \coloneqq \frac{\theta(F|U)}{t_{rel}F(\theta_{\mathcal X|U})+\omega_{rel} F(\theta_{Y\times U|U})}, $$
where $\theta(F|U)$, $\theta_{\mathcal X|U}$ and $\theta_{Y\times U|U})$ are the submodules of $\theta(F)$, $\theta_{X}$ and $\theta_{Y\times U}$ of relative vector fields, respectively, and $t_{rel}(F)$ and $\omega_{rel}(F)$ are the respective restrictions of $tF$ and $\omega F$. The fact that $(X,f)$ has finite singularity type implies that $N\eqA_e (F|U)$ is coherent (see the proof of \cite[Lemma 5.3]{Mond-Nuno2020}) and, hence, $\mathcal C$ is analytic in $Y\times U$. Moreover, the projection $\pi_2:\mathcal C\to U$ given by $\pi_2(y,u)=u$ is a finite mapping, because $(X,f)$ has isolated instability. Therefore, $\mathcal B=\pi(\mathcal{C})$  is also analytic in $U$, by Remmert's finite mapping theorem.

On the other hand, we prove that $\mathcal B$ cannot be equal to $U$. Since $(\mathcal{X},\pi,F,j)$ is a versal unfolding of $(X,f)$, $(\mathcal X,\pi)$ is a versal unfolding of $X$. Hence, there exists $u_0\in U$ such that $X_{u_0}$ is smooth. Now, we can apply the classical Thom-Mather theory to the mapping $f_{u_0}\colon X_{u_0}\to\CC^p$. If either $(n,p)$ are nice dimensions or $f_{u_0}$ has only corank one singularities, then for almost any $u$ in a neighbourhood of $u_0$, the mapping $f_u\colon X_u\to\CC^p$ has only stable singularities (see, for example, \cite[Propositions 5.5 and 5.6]{Mond-Nuno2020}).


\end{remark}

\begin{figure}[H]
	\centering
		\includegraphics[width=0.8\textwidth]{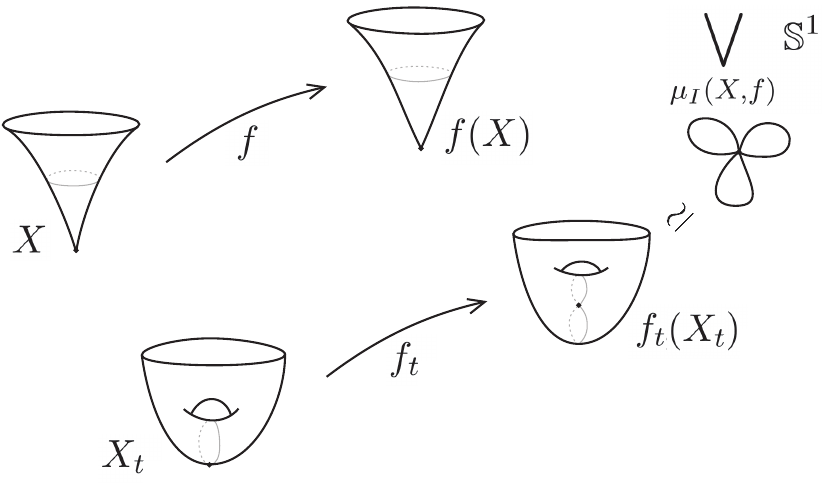}
		\vspace{2mm}
	\caption{Illustration of how $\mu_I(X,f)$ works, i.e., of the homology of the image of a stable perturbation $(X_t,f_t)$.}
	\label{fig:muICIS1}
\end{figure}
%

A desirable property of this topological $\eqA$-invariant is that it is conservative, as it was for the usual image Milnor number (see \cite[Theorem 2.6]{GimenezConejero2021}). The reasoning that proves the conservation of the usual image Milnor number can be applied verbatim for the general version, and is based as well on \cref{siersma}. Here, we give a sketch of the proof.

\begin{theorem}\label{conservation}
Let $f:(X,S)\rightarrow  (\CC^{n+1},0)$ be as in \cref{muICIS}, and $(X_{u_0},f_{u_0})$ a perturbation in a one-dimensional unfolding of $(X,f)$. Take a representative of the unfolding such that its codomain is a Milnor ball $B_\epsilon$.  Then
$$ \mu_I(X,f)=\beta_n\big(f_{u_0}(X_{u_0})\big)+\sum_{y\in B_\epsilon}\mu_I(X_{u_0},f_{u_0}; y),$$
where $\beta_n$ is the $n$th Betti number, if $u_0$ is small enough.
\label{conservationICIS}
\end{theorem}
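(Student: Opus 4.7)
The strategy follows the smooth-target case of \cite[Theorem 2.6]{GimenezConejero2021}: express both sides of the conservation formula as sums of ordinary Milnor numbers of a single defining equation of the image, by applying Siersma's \cref{siersma} twice, and then match the contributions.

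First, I would enlarge the given one-parameter unfolding to fully stabilise $(X_{u_0}, f_{u_0})$. Since $f_{u_0}$ has finitely many instabilities $y_1, \ldots, y_r \in B_\epsilon$ by \cref{MGcriterion}, and each is of corank one or in nice dimensions, there is a further one-parameter unfolding $s \mapsto (X_{u_0,s}, f_{u_0,s})$ whose generic member is stable; combined with the original, this yields a stabilisation of $(X,f)$. Let $g_t$ denote a reduced defining equation of $f_t(X_t) \subset B_\epsilon$ in this two-parameter family, so that $G(x,t) = g_t(x)$ is an unfolding of $g_0$; shrink $B_\epsilon$ so that $G$ is topologically trivial over the Milnor sphere $S_\epsilon$ in the sense of \cref{deftopsiersma}.

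Applied to the deformation $s \mapsto g_{u_0,s}$, Siersma's theorem together with \cref{muICIS} yields
\[
\mu_I(X,f) = \beta_n\bigl(f_{u_0,s}(X_{u_0,s})\bigr) = \sum_{z \,:\, g_{u_0,s}(z) \neq 0} \mu(g_{u_0,s}; z)
\]
for small $s \neq 0$, while applied at $s=0$ to the fibres of $g_{u_0}$ it gives
\[
\beta_n\bigl(f_{u_0}(X_{u_0})\bigr) = \sum_{z \,:\, g_{u_0}(z) \neq 0} \mu(g_{u_0}; z).
\]
The critical points of $g_{u_0,s}$ lying off its zero fibre split into (a) those that perturb from isolated critical points of $g_{u_0}$ off its zero fibre, whose Milnor numbers sum to $\beta_n(f_{u_0}(X_{u_0}))$ by conservation of the ordinary Milnor number, and (b) those newly created from the non-isolated critical locus of $g_{u_0}$, which accumulates precisely at the instability points $y_1, \ldots, y_r$. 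A third application of \cref{siersma}, localised around each $y_i$ to the multi-germ $(X_{u_0}, f_{u_0}; y_i)$ and its stabilisation induced by $s$, identifies the total Milnor number in the $i$-th cluster with $\mu_I(X_{u_0}, f_{u_0}; y_i)$. Summing the two contributions gives the desired identity.

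The principal technical obstacle is to arrange that a single radius $\epsilon$ serves simultaneously for the global family $g_t$ near $t=0$, for the secondary stabilisation $g_{u_0,s}$, and for the local Milnor data around each $y_i$, so that topological triviality over the Milnor sphere holds in all three settings and the resulting Siersma decompositions are mutually consistent. This rests on the finiteness of the instability locus of $f_{u_0}$ and a stratified transversality argument of the kind sketched in the remark following \cref{muICIS}.
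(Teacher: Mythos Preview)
Your proposal is correct and follows essentially the same route as the paper's (sketched) proof: embed the one-parameter unfolding in a larger family where a stable perturbation of $(X_{u_0},f_{u_0})$ is available, pass to the defining equation $G$ of the image, arrange topological triviality over the Milnor sphere, and then invoke Siersma's theorem to express each Betti number as a sum of ordinary Milnor numbers that can be matched. The only cosmetic difference is that the paper takes a versal unfolding of $(X,f)$ containing the given one, whereas you add an ad hoc second parameter $s$ stabilising $(X_{u_0},f_{u_0})$; both produce the same two-parameter picture, and the technical obstacle you flag (a single $\epsilon$ working globally and around each $y_i$) is precisely what the paper defers to \cite[Theorem 2.6]{GimenezConejero2021}.
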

	
\textit{Sketch of the proof.}
If $(X_{u_0},f_{u_0})$ is stable the result is trivial. 

Assume that $(X_{u_0},f_{u_0})$ is not stable. Then, take a versal unfolding of $(X,f)$ such that it unfolds the original one-dimensional unfolding and $(X_{u_{0},v},f_{u_0,v})$ is stable for $v\neq 0$ small enough. Consider the defining equations for $f_{u,v}(X_{u,v})$, $G$. Now, as $f$ is stable outside the origin and $X$ is smooth outside the points of $S$ we can take a Milnor radius $\epsilon$ such that the family of equations $G$ is topologically trivial over $S_\epsilon$. Now we are in the conditions of applying \cref{siersma} and follow the reasoning of \cite[Theorem 2.6]{GimenezConejero2021}, but working on $X_{u_{0},v}$ and the corresponding instabilities of $(X_{u_0},f_{u_0})$.

\qed\newline

In particular this implies the upper semi-continuity of the image Milnor number.

\begin{corollary}\label{uscontinuity}
Using the notation and hypotheses of \cref{conservation}, $\mu_I(X,f)$ is upper semi-continuous, i.e., $$\mu_I(X,f)\geq \mu_I(X_{u_0},f_{u_0};y).$$
\end{corollary}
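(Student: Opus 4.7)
The plan is to deduce the corollary as an immediate consequence of the conservation theorem (\cref{conservationICIS}) just stated, by observing that every term appearing in the sum on the right-hand side is a non-negative integer.

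More concretely, first I would take a one-parameter unfolding of $(X,f)$ in which the member at parameter $u_0$ is precisely the perturbation $(X_{u_0},f_{u_0})$, fix a representative over a Milnor ball $B_\epsilon$ for $(X,f)$, and shrink $u_0$ so that \cref{conservationICIS} applies. This gives the identity
\[
\mu_I(X,f)=\beta_n\bigl(f_{u_0}(X_{u_0})\bigr)+\sum_{y\in B_\epsilon}\mu_I(X_{u_0},f_{u_0};y).
\]
Then I would simply note that $\beta_n(f_{u_0}(X_{u_0}))\geq 0$ since it is a Betti number, and that each local image Milnor number $\mu_I(X_{u_0},f_{u_0};y)$ is a non-negative integer (it counts spheres in the wedge decomposition provided by \cref{muICIS} applied locally at $y$). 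Fixing an arbitrary $y\in B_\epsilon$ and dropping all the other non-negative summands yields
\[
\mu_I(X,f)\;\geq\;\mu_I(X_{u_0},f_{u_0};y),
\]
which is the claimed inequality.

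There is no real obstacle here beyond making sure the hypotheses of \cref{conservationICIS} are met, namely that the chosen representative of the unfolding has codomain a Milnor ball for $(X,f)$ and that $u_0$ is sufficiently small so that all relevant instabilities of $(X_{u_0},f_{u_0})$ lie inside $B_\epsilon$; both are already built into the statement of the conservation theorem. Thus the corollary reduces to the elementary step of discarding non-negative terms from the conservation identity.
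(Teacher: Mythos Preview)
Your proposal is correct and matches the paper's approach: the paper states the corollary immediately after \cref{conservationICIS} with no proof, noting only that it follows ``in particular'' from the conservation formula. Dropping the non-negative summands from that identity is exactly the intended argument.
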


\section{Multiple points and the \textsc{icss}}\label{sec:3}

One may ask what happens when we have a map germ $f:(X,S)\rightarrow  (\CC^{p},0)$ with $X$ \textsc{icis} but $\dim X =n<p$, in general. Houston studied this for the case of smooth source in \cite{Houston2010} using the multiple point spaces of the map germ and an Image-Computing Spectral Sequence (\textsc{icss}). We take a similar path, therefore, we need the machinery of the multiple point spaces (see \cite{Nuno2017, Mond1987, Marar1989}, among others, for more details). To simplify notation, $(X_t,f_t)$ will be a stable perturbation of $(X,f)$.

\begin{definition}\label{multiple}
The \emph{$k$th-multiple point space}, $D^k(f)$, of a mapping or a map germ $f$ is defined as follows:
\begin{enumerate}[\itshape(i)]
	
\item Let $f\colon X\rightarrow Y$ be a locally stable mapping between complex manifolds. Then, $D^k(f)$ is equal to the closure of the set of points $\left(x_1,\dots,x_k\right)$ in $X^k$ such that $f\left(x_i\right)=f\left(x_j\right)$ but $x_i\neq x_j$, for all $i\neq j$.

	\item Given $f:(X,S)\rightarrow(\CC^p,0)$ with finite singularity type, if $(\mathcal{X},\pi,F,j)$ is a stable unfolding then $$D^k(f)=J_k^{-1}\medpar{D^k(F)},$$ where $J_k=\overbrace{j\times\dots\times j}^{k}$.\label{itemdk} 
	
	\item For a map germ $f\colon(X,S)\rightarrow(\CC^p,0)$, we will denote as $d(f)$ the maximal multiplicity of the stable perturbation of $f$ (i.e., $d(f)\coloneqq\textnormal{max}\left\{k : D^k(f_t)\neq\varnothing\right\}$) and $s(f)$ the number of points of the set $S$.\label{itemdsf}
\end{enumerate}
\end{definition}

These definitions behave properly under isomorphisms and base change of unfoldings, i.e., \cref{itemdk,itemdsf} do not depend on the stable unfolding (this is proved in \cite[Lemma 2.3]{Nuno2017}, which is stated for the smooth case but the proof works for our case).

The multiple point spaces have some useful properties, for example we study their complex structure if $f$ is of corank one in \cref{DICIS}. We know some facts about them in any corank, for example, we can use \cite[Theorem 4.3 and Corollary 4.4]{Houston1997} with an $\eqA$-finite map germ $f:(X,S)\to (\CC^p,0)$, $\dim X=n$, and deduce that the dimension of $D^k(f)$ is $p-k(p-n)$ (if this number is not negative nor $D^k(f)$ is empty), because the same proofs work. Furthermore, in the pair of dimensions $(n,p)$, we deduce from \cite[Theorem 4.3]{Houston1997} that $d(f)$ is at most the integer part of $\frac{p}{p-n}$. Finally, taking into account these previous remarks, this maximum is attained when $s(f)\geq d(f)$ because the proof of \cite[Lemma 3.3]{GimenezConejero2021} only relies on the dimension of the multiple point spaces.


We want to study the multiple point spaces because they provide a lot of information about the map they come from by means of an \textsc{icss}, as one can see in \cite{Marar1989} or \cite{Goryunov1995}. The following lemma is an example of this, and it generalizes \cite[Theorem 2.14]{Marar1989} for the context of multi-germs and an \textsc{icis} in the source and \cite[Theorem 2.4 and Corollary 2.6]{Houston2010} for the context of \textsc{icis} in the source.

\begin{lemma}\label{DICIS}
For $f:(X,S)\rightarrow (\CC^p,0)$ of corank $1$ and finite singularity type, $\dim X =n < p$:
\begin{enumerate}[\itshape(i)]
	\item $(X,f)$ is stable if, and only if, $D^k(f)$ is smooth of dimension $p-k(p-n)$, or empty, for $k\geq1$.
	\item If $\eqA_e$-codim$(X,f)$ is finite, for each $k$ with $p-k(p-n)\geq 0$, $D^k(f)$ is empty or an \textsc{icis} of dimension $p-k(p-n)$. Furthermore, for those $k$ such that $p-k(p-n)< 0$, $D^k(f)$ is a a subset of $S^k$, possibly empty.
\end{enumerate}
\end{lemma}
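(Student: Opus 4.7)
My strategy is to reduce both parts to the classical smooth corank-one case of Marar--Mond \cite[Theorem 2.14]{Marar1989}, applied to a stable unfolding of $(X,f)$. Since $(X,f)$ has finite singularity type in both cases of the statement, the bifurcation-set argument from the preceding remark supplies a stable unfolding $(\mathcal{X},\pi,F,j)$ of $(X,f)$ over some $(\CC^d,0)$; because $F$ itself is stable, $\mathcal{X}$ must be smooth (by the remark following \cref{sumaAes} applied to $F$) and $F$ is a stable holomorphic map between smooth manifolds that inherits corank one from $f$ (the parameter directions being submersive). By \cite[Theorem 2.14]{Marar1989}, together with its routine multi-germ adaptation, each $D^k(F)$ is smooth of dimension $(p+d)-k(p-n)$ when this is non-negative, and contained in the diagonal over the base points otherwise.

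For part (i), the forward implication is immediate: stability of $(X,f)$ forces $X$ to be smooth and $f$ to be stable in the classical sense (remark after \cref{sumaAes}), so Marar--Mond applies to $f$ directly. For the converse, note that $D^1(f)=X$, so smoothness of $D^1(f)$ makes $X$ smooth as a germ; the converse direction of Marar--Mond then gives $f$ stable in the usual sense, hence $(X,f)$ stable.

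For part (ii), we use the identification $D^k(f)=J_k^{-1}(D^k(F))$ from \cref{multiple}. Since all components of a $k$-tuple in $D^k(F)$ share the same $\pi$-value (as $F$ records the parameter), this pullback is exactly the fiber over $0$ of the induced projection $\pi|_{D^k(F)}\colon D^k(F)\to(\CC^d,0)$. For a generic parameter $t$, part (i) applied to the stable $(X_t,f_t)$ shows that $D^k(f_t)$ is smooth of the expected dimension $p-k(p-n)$; hence the map $\pi|_{D^k(F)}$, from the smooth (in particular Cohen--Macaulay) space $D^k(F)$ to a smooth base with generically equidimensional fibers, is flat. Thus $D^k(f)$ is a complete intersection of dimension $p-k(p-n)$ as required. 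For the isolated singularities, $\mathcal{A}$-finiteness gives isolated instability, so any point $\xi\in D^k(f)$ whose common image differs from $0$ sits at a stable multi-germ of $f$ and is smooth in $D^k(f)$ by (i); since $\mathcal{A}$-finiteness implies $\mathcal{K}$-finiteness at $S$, $f^{-1}(0)=S$ as a germ, which confines $\mathrm{Sing}(D^k(f))$ to the finite set $S^k$. The claim in the range $p-k(p-n)<0$ is inherited directly from the corresponding statement for $D^k(F)$ through the pullback.

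The main obstacle I expect is the flatness step, i.e.~ensuring that the central fiber $D^k(f)$ does not jump in dimension relative to the generic fibers of $\pi|_{D^k(F)}$; this is what lets us promote the set-theoretic pullback $J_k^{-1}(D^k(F))$ into an honest complete intersection. The subsidiary technical checks—that the total space of a stable unfolding is smooth and that $F$ retains corank one—are standard but must be noted explicitly; once these are in hand, the remaining dimension count and the identification of the singular locus with $S^k$ are routine.
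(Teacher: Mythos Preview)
Your argument for part (i) matches the paper's exactly.

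For part (ii), there is a genuine gap precisely where you flag it. ``Generically equidimensional fibers'' is \emph{not} a hypothesis of miracle flatness: from a Cohen--Macaulay source to a smooth base you need \emph{every} fiber to have dimension $\dim D^k(F)-d$, and the fiber whose dimension is in doubt is exactly the central one $D^k(f)$. So the flatness claim, as written, is circular.

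The repair is already in your hands: move the isolated-instability argument \emph{before} the dimension claim. By Mather--Gaffney, $f$ is stable at every point of $X\setminus S$, so by part (i) the germ of $D^k(f)$ at any point outside $S^k$ is smooth of dimension $p-k(p-n)$. Since $S^k$ is finite, this already forces $\dim D^k(f)=p-k(p-n)$ (when this is $\ge 0$), which is the equidimensionality you need; equivalently, $D^k(f)$ and $D^k(F)$ have the same codimension, hence the pullback of the complete intersection $D^k(F)$ by the $d$ parameter equations is again a complete intersection. This is exactly the paper's argument, phrased in terms of codimension rather than flatness.

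The same reordering fixes the range $p-k(p-n)<0$: here $D^k(F)$ may well be positive-dimensional (its expected dimension is $p+d-k(p-n)$, which can be $\ge 0$), so nothing is ``inherited directly'' from $D^k(F)$. Instead, the isolated-instability argument shows $D^k(f)$ is empty away from $S^k$, which is the assertion.
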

\begin{proof}
For the first statement, if $(X,f)$ is stable, $X$ is smooth and it follows from \cite[Theorem 2.4]{Houston2010} (cf. \cite[Theorem 2.14]{Marar1989} for the mono-germ case). For the converse, if every $D^k(f)$ is smooth, then, in particular, so is $D^1(f)=X$ and, again, the result follows from \cite[Theorem 2.4]{Houston2010}.

For the second statement, note that for the case of $X$ being smooth the statement is contained in \cite[Corollary 2.6]{Houston2010}. Fortunately, if we take a versal unfolding $(\mathcal{X},\pi,F,j)$ of $(X,f)$ we can apply that result. Also, for $k$ with $p-k(p-n)\geq 0$, observe that the codimension of $D^k(F)$ coincides with the one of $D^k(f)$ because outside the isolated singularities they are smooth and of the same codimension. Furthermore, the only singularities that can appear in $D^k(f)$ are at the problematic points (the ones that come from $S$) by the Mather-Gaffney criterion \cref{MGcriterion}. It only remains to check that $D^k(f)$ is a complete intersection, and this is the case as it can be constructed as a pull back of a complete intersection, $D^k(F)$, and both have the same codimension.

The other case is trivial. 
\qed\newline
\end{proof}

Note that this is the best we can aim for: we need to study $D^1(f)=X$. The following example illustrates this.

\begin{example}\label{inmersion}
Let $(X,0)\subset(\CC^N,0)$ be a germ of an \textsc{icis} of dimension $n$. Then, the inclusion $i:(X,0)\rightarrow(\CC^N,0)$ is stable in the sense that the $\eqA_e$\mdash codimension of $i$ is zero, since $\omega i$ is surjective. However, the $\eqA_e$\mdash codimension of $(X,i)$ is equal to the Tjurina number of $(X,0)$. Furthermore, after taking a stable perturbation of $(X,i)$, say $(X_t,i_t)$, every $D^k(i_t)$ is empty for $k\geq 2$  because $X_t$ is smooth for $t\neq0$ and $i_t$ is an inclusion to $\CC^N$.
\end{example}

The multiple point spaces in corank one are specially friendly, as we have seen. Indeed, if we consider $(X,S)\subseteq (\CC^N,S)$ and a map germ $f:(X,S)\rightarrow (\CC^p,0)$ of corank $1$ and finite singularity type, $\dim X =n < p$, we can simplify even more the structure of the different $D^k(f)$. This is because $D^k(f)$ is a subset of $X^k$, therefore a subset of $\CC^{Nk}$ and, if $f$ is of corank one, we can assume that $f$ has the form
\begin{align*}
	f:(X,S)&\to(\CC^p,0)\\
	(x_1,\dots,x_N)&\mapsto\medpar{x_1,\dots,x_{N-1},h_1(x),h_2(x)}.
\end{align*}
Therefore,  the first $N-1$ coordinates of all the points in a $k$-tuple in $D^k(f)$ must be the same. Omitting repeated entries, we can see $D^k(f)$ as a subset of $X\times\CC^{k-1}$. Finally, observe that this identification preserves the \textsc{icis} structure.

On the other hand, there is a natural action of $\Sigma_k$ in $D^k(f)$ by permutation of entries of the $k$-tuples of points. Also, recall that all the elements in $\Sigma_k$ can be decomposed into disjoint cycles in a unique way, called the \textit{cycle decomposition}, and this inspires a refinement of the $k$th-multiple point space based on the relations an element $\sigma\in\Sigma_k$ gives. To be precise, if we take a partition of $k$, $\gamma(k)=(r_1,\dots, r_m)$, and $\alpha_i=\texttt{\#}\left\{j:r_j=i\right\}$, where $\texttt{\#}A$ denotes the number of points in $A$, one can find an element $\sigma\in\Sigma_k$ such that it can be decomposed into $\alpha_i$ cycles of length $i$, all the cycles being pair-wise disjoint. In this case, the partition $\gamma(k)$ is called the \textit{cycle type} of $\sigma$ (see \cite[pp. 2--3]{Sagan2001}).

\begin{definition}
We define $D^k\medpar{f,\gamma(k)}$ as (the isomorphism type of) the subspace of $D^k(f)$ given by the fixed points of $\sigma$ with the usual action, for $\sigma$ of cycle type $\gamma(k)$. We may also use $D^k(f)^\sigma$ instead of $D^k\medpar{f,\gamma(k)}$ to specify the element.
\end{definition}

\begin{remark}
By symmetry, $D^k(f)^\sigma$ is isomorphic to $D^k(f)^{\sigma'}$ if $\sigma$ and $\sigma'$ have the same cycle type (recall that the cycle types determine the conjugacy classes). Hence the definition of $D^k\medpar{f,\gamma(k)}$ works modulo isomorphism. We will omit this detail in general (see also \cref{DP}).
\end{remark}

If we take into account the group action and the subspaces $D^k(f)^\sigma$, we have a refinement of \cref{DICIS}. It is a generalization of  \cite[Corollary 2.15]{Marar1989} for the context of multi-germs and an \textsc{icis} in the source and \cite[Corollary 2.8]{Houston2010} for the context of \textsc{icis} in the source.

\begin{lemma}\label{lem: dkgamma}
With the hypotheses of \cref{DICIS} and $\gamma(k)$ a partition of $k$, we have the following.
\begin{enumerate}[\itshape(i)]
	\item If $f$ is stable, $D^k\medpar{f,\gamma(k)}$ is smooth of dimension $p-k(p-n)-k+\sum_i\alpha_i$, or empty.\label{iPD}
	\item If $\eqA_e-codim(X,f)$ is finite, then:\label{iiPD}
	\begin{enumerate}[(a)]
		\item for each $k$ with $p-k(p-n)-k+\sum_i\alpha_i\geq0$, $D^k\medpar{f,\gamma(k)}$ is empty or an \textsc{icis} of dimension $p-k(p-n)-k+\sum_i\alpha_i$,
		\item for each $k$ with $p-k(p-n)-k+\sum_i\alpha_i<0$, $D^k\medpar{f,\gamma(k)}$ is subset of $S^k$, possibly empty.
	\end{enumerate}
\end{enumerate}
\label{DP}
\end{lemma}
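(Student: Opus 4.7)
The plan is to follow the same strategy as in the proof of \cref{DICIS}, but invoking the refined smooth-source results of Houston that keep track of the cycle partition, namely \cite[Corollary 2.8]{Houston2010} in the \textsc{icis}-source case (equivalently \cite[Corollary 2.15]{Marar1989} in the mono-germ smooth case).

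For part (i), if $(X,f)$ is stable then $X$ must be smooth, as observed just after \cref{sumaAes}, so $f$ is a stable map germ from a smooth source. In this situation \cite[Corollary 2.8]{Houston2010} applies directly and gives that $D^k(f,\gamma(k))$ is either smooth of the stated dimension or empty.

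For part (ii), suppose first that $p-k(p-n)-k+\sum_i\alpha_i\ge 0$. Take a miniversal unfolding $(\mathcal{X},\pi,F,j)$ of $(X,f)$; by construction $F$ is stable, so part (i) applied to $F$ (with source dimension $n+d$ and target dimension $p+d$, where $d$ is the dimension of the parameter space) yields that $D^k(F,\gamma(k))$ is smooth in $\mathcal{X}^k$ of the expected dimension, hence a complete intersection cut out by a regular sequence. Since $J_k=j\times\cdots\times j$ is $\Sigma_k$-equivariant, the identity $D^k(f)=J_k^{-1}\bigl(D^k(F)\bigr)$ from \cref{multiple} upgrades to $D^k(f,\gamma(k))=J_k^{-1}\bigl(D^k(F,\gamma(k))\bigr)$, expressing the target space as the pullback of a complete intersection by the closed embedding $J_k$. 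Away from $S^k$, the Mather-Gaffney criterion \cref{MGcriterion} ensures that $X$ is smooth and $f$ is stable, so part (i) yields smoothness and the expected dimension of $D^k(f,\gamma(k))$ there; this shows simultaneously that the codimension is preserved under the pullback and that any non-smooth points of $D^k(f,\gamma(k))$ must lie in the finite set $S^k$. It follows that $D^k(f,\gamma(k))$ is a complete intersection of the expected dimension with only isolated singularities, that is, an \textsc{icis}. If instead $p-k(p-n)-k+\sum_i\alpha_i<0$, the same argument applied on $X^k\setminus S^k$ forces $D^k(f,\gamma(k))\cap(X^k\setminus S^k)$ to be empty, since a non-empty smooth complex analytic space cannot have negative dimension, and hence $D^k(f,\gamma(k))\subseteq S^k$.

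The main technical point I expect to require care is the assertion that a regular sequence defining $D^k(F,\gamma(k))$ in $\mathcal{O}_{\mathcal{X}^k}$ pulls back to a regular sequence in $\mathcal{O}_{X^k}$, so that the pullback remains a complete intersection of the \emph{expected} codimension. This is handled as in the corresponding step of \cref{DICIS}: one compares codimensions on the open dense locus $X^k\setminus S^k$, where part (i) already supplies the correct dimension of $D^k(f,\gamma(k))$, so no codimension can be lost in the pullback.
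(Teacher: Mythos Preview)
Your proposal is correct and follows essentially the same approach as the paper: reduce (i) to the smooth-source case via the observation that stability forces $X$ smooth and invoke \cite[Corollary 2.8]{Houston2010}, then for (ii) run the pullback argument of \cref{DICIS} through a stable (versal) unfolding, using equivariance of $J_k$ and Mather--Gaffney away from $S^k$. The paper additionally remarks that (i) can be read off from \cref{DICIS} directly by counting the $k-\sum_i\alpha_i$ extra fixed-point equations imposed by a permutation of cycle type $\gamma(k)$, but this is just an alternative bookkeeping of the same idea.
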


A proof of this lemma can be seen in \cite[Corollary 2.8]{Houston2010}, because the same proof applies once we know \cref{DICIS}. In any case, observe that the \cref{iPD} is a consequence of \cref{DICIS} and the fact that we are adding $k-\sum_i\alpha_i$ equations to the ones of $D^k(f)$ to form $D^k\medpar{f,\gamma(k)}$, as each $r_i$ of $\gamma(k)$ gives $r_i-1$ more equations. \cref{iiPD} follows the same idea of \cref{iPD}.

From the homology of the Milnor fiber of the multiple point spaces, a special part will serve our purposes, the so-called \textit{alternating part}.
\begin{definition}
Given a \textit{sign} homomorphism, $sign:G\rightarrow\left\{\pm1\right\}$ where $\left\{\pm1\right\}\cong\ZZ/2\ZZ$, and a linear action of a finite group $G$ on some $\CC$\mdash vectorial space $H$, we say that the $G$\textit{-alternating part} of $H$ is the set
$$ \left\{h\in H : gh=sign(g)h, \text{ for all }g\in G\right\},$$
and we denote it by $H^{\Alt_G}$. If the group is $\Sigma_k$, then the $sign$ homomorphism is the usual $sign$ of a permutation and we simply write $H^{\Alt_k}$ or $H^\Alt$ if the group is clear from the context.
\end{definition}

\begin{remark}
In terms of representation theory, $H^{\Alt_G}$ is the \textit{isotype} of the sign representation of the representation $H$. Furthermore, the $sign$ homomorphism can be defined as the usual $sign$ for permutations for every finite group, seen as a subgroup of a $\Sigma_N$ by Cayley's theorem (see \cite[Proposition 1.6.8]{Robinson1996}).
\end{remark}

With the study of the multiple point spaces, we are able to obtain a lot of information of images of stable perturbations in any pair of dimensions, as long as $n<p$. This is done by means of an \textsc{icss}, as the one we show below. For example, in \cite{Houston2010}, Houston uses a similar theorem as we are going to use this theorem now, furthermore it will appear in later techniques as well. The theorems that give an \textsc{icss} have been evolving through the years, being more general and with new approaches. A first version of these theorems was \cite[Proposition 2.3]{Goryunov1993} for rational homology and $\eqA$-finite map germs. See also the generalizations \cite[Corollary 1.2.2]{Goryunov1995} for finite maps and integer homology; \cite[Theorem 5.4]{Houston2007} with a bigger class of maps, any coefficients and homology of the pair; \cite{CisnerosMolina2019} with a new approach and \cite[Section 10]{Mond-Nuno2020} for a self-contained review of the \textsc{icss}.

\begin{theorem}\label{SecuenciaH}
Let $F:X\rightarrow Y$ be a finite and proper subanalytic map and let $Z$ be a, possibly empty, subanalytic subset of $X$ such that $\left.F\right|_Z$ is also proper. Then, there exists a spectral sequence
$$ E^1_{r,q}= H^{\Alt_{r+1}}_q\medpar{D^{r+1}\left(F\right),D^{r+1}\left(\left.F\right|_Z\right);\ZZ}\Longrightarrow H_*\medpar{F\left(X\right),F\left(Z\right);\ZZ},$$
and the differential is induced from the natural map $\epsilon_{r+1,r}:D^{r+1}(F)\rightarrow D^{r}(F)$ given by the projection on the first $r$ coordinates.
\end{theorem}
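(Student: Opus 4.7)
The strategy is to build the spectral sequence from an augmented semi-simplicial resolution of $F(X)$ by fibered powers of $X$ over $Y$, following the template of \cite{Goryunov1993,Goryunov1995,Houston2007,CisnerosMolina2019} (see also the self-contained exposition in \cite[Section 10]{Mond-Nuno2020}). Concretely, I would consider the augmented semi-simplicial space whose $k$th level is the $(k{+}1)$-fold fibered power $X\times_Y\cdots\times_Y X$, with face maps given by forgetting a coordinate, equipped with the natural $\Sigma_{k+1}$-action by coordinate permutation. Because $F$ is finite and proper, the fibre of the geometric realization over any $y\in F(X)$ is a standard simplex of dimension $|F^{-1}(y)|{-}1$, hence contractible, so the augmentation is a homology equivalence onto $F(X)$. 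The skeletal filtration then produces a spectral sequence converging to $H_*\big(F(X);\ZZ\big)$, whose $E_1$ page is the homology of the levels of the simplicial space and whose $d_1$ is the alternating sum of face maps.

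To identify the $E_1$ page with the claimed alternating homology of multiple point spaces, I would stratify each fibered power into the off-diagonal locus, whose closure is $D^{r+1}(F)$, and the union of the big diagonals $\Delta_{ij}=\{x_i=x_j\}$. On $\Delta_{ij}$ the transposition $(i\,j)$ acts trivially, so the sign isotype of its homology vanishes; a Mayer--Vietoris argument iterated over the poset of set partitions then identifies $H_q^{\Alt_{r+1}}\big(X\times_Y\cdots\times_Y X;\ZZ\big)$ with $H_q^{\Alt_{r+1}}\big(D^{r+1}(F);\ZZ\big)$. Running the whole construction simultaneously for $X$ and $Z$ and passing to the long exact sequence of a pair yields the relative version, with $E_1^{r,q}=H_q^{\Alt_{r+1}}\big(D^{r+1}(F),D^{r+1}(F|_Z);\ZZ\big)$. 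Finally, on the sign isotype all but one summand of the alternating sum of face maps are absorbed by the $\Sigma_{r+1}$-action, leaving the projection $\epsilon_{r+1,r}$ that forgets the last coordinate as the induced $d_1$-differential.

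The main obstacle I expect is the identification in the second paragraph. Two technical points require care. First, in the subanalytic (rather than semialgebraic) category one must guarantee that the geometric realization of the simplicial space admits a suitable triangulated structure so that the skeletal filtration has a homological meaning; this is delivered by the subanalytic triangulation theorem, but it must be invoked so that the $\Sigma_{k+1}$-actions act simplicially. Second, the vanishing of alternating homology on intersections of several big diagonals needs an induction over the lattice of set partitions, since the stabiliser of a deep stratum is a Young subgroup of increasing size and one must check that the sign representation has no invariants on each stratum and that the excisions patch together compatibly with all the symmetric group actions. Both issues are handled in the cited references, so the proof reduces essentially to verifying that those arguments go through verbatim under the sole hypothesis of $F$ being finite, proper and subanalytic.
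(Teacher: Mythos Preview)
The paper does not prove this theorem: it is stated as a quotation from the literature, with the surrounding text pointing to \cite{Goryunov1993,Goryunov1995,Houston2007,CisnerosMolina2019} and \cite[Section~10]{Mond-Nuno2020} for the various incarnations and proofs of the image-computing spectral sequence, and the subsequent remark only addresses the distinction between alternating homology and the alternating part of homology. So there is no ``paper's own proof'' to compare against.

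Your sketch is a faithful outline of the standard construction in those references: the augmented semi-simplicial object of fibered powers, contractibility of the fibres of the realization (a simplex on $|F^{-1}(y)|$ vertices), the skeletal filtration giving the spectral sequence, and then the identification of the $E_1$-page with the alternating part via vanishing of the sign isotype on diagonals. The relative version and the description of $d_1$ are also as in \cite{Houston2007}. One point to keep an eye on is that Theorem~\ref{SecuenciaH} is stated for an arbitrary finite proper subanalytic $F$, so $D^{r+1}(F)$ here is not the pulled-back object of Definition~\ref{multiple}\ref{itemdk} but the closure of strict $(r{+}1)$-tuples in the fibered product; your diagonal argument implicitly uses this, and it is exactly what makes the identification of the sign isotype go through. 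With that caveat, your proposal matches the literature the paper is citing.
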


\begin{remark}
We skip the details about the difference between the homology of the \textit{alternating chains}, usually called the \textit{alternating homology}, and the \textit{alternating part of the homology} we use here. There is no relevant change using one or the other here except, perhaps, in \cref{corrangomayor}, where the details of the difference are explained. The reason we use them indistinctly is that they coincide with rational coefficients, for more information see \cite[Theorem 2.1.2]{Goryunov1995} for our setting, but also \cite[Proposition 10.1]{Mond-Nuno2020}. A complete account of this can also be found in the thesis of the first author (in particular, \cite[Section 2.2]{Robertothesis}).
\end{remark}

Our first application of \cref{SecuenciaH} is the following result, which follows the idea of \cite[Theorem 3.1]{Houston2010} and generalizes it  when the source is an \textsc{icis} and we consider integer homology. Moreover, it is a generalization of \cite[Theorem 3.3.1]{Goryunov1995}, for multi-germs. 

\begin{theorem}\label{homotopydis}
Consider a map germ $f:(X,S)\rightarrow  (\CC^{p},0)$ of finite $\eqA_e$-codimension and of corank $1$, with $X$ \textsc{icis} of dimension $\dim X=n<p$. Then, the reduced integer homology of the image of a stable perturbation of $(X,f)$ is zero except possibly in dimensions 
\begin{enumerate}[\itshape(i)]
	\item $p-k(p-n)+k-1$ for all $2\leq k \leq d(f)$,\\
	\item $d(f)-1$  if $s(f)>d(f)$, and\\
  \item $n$ if $X$ is non-smooth (or $p = n + 1$).
\end{enumerate}
\end{theorem}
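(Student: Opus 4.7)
The plan is to apply the image-computing spectral sequence of \cref{SecuenciaH} to a stable perturbation $f_t\colon X_t\to\CC^p$ of $(X,f)$, taking $Z=\varnothing$. This yields
\[
E_1^{r,q}=H^{\Alt_{r+1}}_q\medpar{D^{r+1}(f_t);\ZZ}\Longrightarrow H_{r+q}\medpar{f_t(X_t);\ZZ},
\]
so it suffices to locate the pairs $(r,q)$ for which $E_1^{r,q}$ can be nonzero and to verify that the corresponding total degrees $r+q$ fall in the list (i)--(iii), modulo the $E_1^{0,0}$ term which only feeds $H_0$ and disappears upon passing to reduced homology.

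First I would handle the row $r=0$, where $\Sigma_1$ acts trivially and $E_1^{0,q}=H_q(X_t;\ZZ)$. Since $f_t$ arises from a stabilisation of $(X,f)$, $X_t$ is a smoothing of the \textsc{icis} $(X,S)$; hence $X_t$ is contractible when $X$ is smooth, and otherwise has the homotopy type of a wedge of $n$-spheres. Its reduced homology is nonzero at most in degree $n$, producing case (iii).

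For $r\ge 1$ I would set $k=r+1$ and appeal to \cref{DICIS,lem: dkgamma} to identify $D^k(f_t)$. If $2\le k\le d(f)$ and $p-k(p-n)\ge 0$, then $D^k(f_t)$ is smooth of dimension $p-k(p-n)$, and the corank one hypothesis lets us realise it as a subvariety of $X_t\times\CC^{k-1}$ via divided differences, with a well-understood $\Sigma_k$-action \cite{Marar1989}. The integer alternating homology $H^{\Alt_k}_q(D^k(f_t);\ZZ)$ is then concentrated in the single degree $q=\dim D^k(f_t)=p-k(p-n)$; this is the integer version of \cite[Theorem 3.3.1]{Goryunov1995} worked out in \cite[Theorem 3.1]{Houston2010} for the smooth-source case, and it transfers verbatim here because $X_t$ is smooth. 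The surviving entry $E_1^{k-1,\,p-k(p-n)}$ then lives in total degree $p-k(p-n)+k-1$, giving case (i). When $k\le d(f)$ but $p-k(p-n)<0$, \cref{lem: dkgamma} identifies $D^k(f_t)$ as a finite subset of $S^k$; the saturation condition $s(f)>d(f)$ then produces a possibly nonzero $E_1^{d(f)-1,0}$ at the top multiplicity $k=d(f)$, giving case (ii). For $k>d(f)$ the row vanishes because $D^k(f_t)=\varnothing$.

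The main obstacle is the integer-coefficient concentration of $H^{\Alt_k}_*(D^k(f_t);\ZZ)$ in top degree for corank one stable maps. With rational coefficients this follows from the symmetric-group decomposition of the divided-difference complex describing $D^k$, but the integer statement requires an additional torsion-freeness argument on the alternating chain complex (cf.\ the remark after \cref{SecuenciaH} distinguishing alternating chains from the alternating part of the homology). Once this is granted, the transition from the smooth-source treatment of \cite[Theorem 3.1]{Houston2010} to the \textsc{icis}-source setting is routine: the multiple point spaces are identified by \cref{DICIS,lem: dkgamma}, the analysis of the rows $r\ge 1$ is unchanged since $X_t$ is smooth, and the only genuinely new phenomenon is that the row $r=0$ now carries the nontrivial Milnor-fibre homology of $(X,S)$, which is exactly case (iii).
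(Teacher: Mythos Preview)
Your approach differs from the paper's: you apply the \emph{absolute} \textsc{icss} to the stable perturbation $f_t$ with $Z=\varnothing$, whereas the paper applies the \emph{relative} \textsc{icss} to the pair $(F,f_t)$, with $F$ a versal unfolding. Both routes are viable in principle, and the relative version buys exactly what your version is missing: control of the bottom row.

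The genuine gap is your claim that $H^{\Alt_k}_q(D^k(f_t);\ZZ)$ is concentrated in the \emph{single} degree $q=p-k(p-n)$. Since $D^k(f_t)$ is the Milnor fibre of the \textsc{icis} $D^k(f)$, its homology sits in degrees $0$ and $p-k(p-n)$, and the degree-$0$ alternating part is typically nonzero: a generic $k$-fold point of $f_t$ contributes a free $\Sigma_k$-orbit of local sheets to $D^k(f_t)$, hence a copy of the regular representation in $H_0$ and a nontrivial sign isotype. (The references you cite concern the homology of the image, not the vanishing of $H^{\Alt_k}_0$.) Thus your $E_1$ page has a full bottom row $E_1^{r,0}$ for $0\le r\le d(f)-1$, and until you show that the $d_1$-complex along this row is exact at the interior positions you cannot rule out contributions to $H_r(f_t(X_t))$ in every such degree. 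The paper handles this by passing to the pair: there the image $F(\mathcal X)$ is contractible, and a second application of \cref{SecuenciaH} to $F$ alone forces the bottom row to be exact, leaving only the entry responsible for case~(ii).

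Your derivation of case~(ii) is also off. The hypothesis ``$k\le d(f)$ with $p-k(p-n)<0$'' is vacuous, since $d(f)\le \lfloor p/(p-n)\rfloor$; and even if it were not, \cref{lem: dkgamma}(ii)(b) describes $D^k(f)$ for the unstable germ, not $D^k(f_t)$, which by part~(i) is empty in that range. In the paper's relative setup the degree $d(f)-1$ contribution when $s(f)>d(f)$ arises instead from the surviving bottom-row entry coming from the extra components of $D^k(F)$ for $k>d(f)$; in your absolute setup it would have to come from the kernel of $d_1$ at $E_1^{d(f)-1,0}$, which again requires the bottom-row exactness argument you have not supplied.
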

\begin{proof}
Apply \cref{SecuenciaH} to a versal unfolding of $(X,f)$, say $(\mathcal{X},\pi,F,j)$, and its restriction to $Z=\pi^{-1}(t)=X_t$ that gives a stable perturbation, $(X_t,f_t)$. Hence, we have the spectral sequence
$$ E^1_{r,q}\left(F,f\right)\coloneqq H^{\Alt_{r+1}}_q\medpar{D^{r+1}\left(F\right),D^{r+1}\left(f_t\right);\ZZ}\Longrightarrow H_*\medpar{F\left(\mathcal{X}\right),f_t\left(X_t\right);\ZZ}. $$

This spectral sequence, of homology type, collapses at the second page instead of the first one (cf. \cref{1pag}), because the bottom row of the $E_1$ page may have non-trivial differentials, as in the case of the quadruple point singularity of a map $\CC^2\rightarrow\CC^3$. From this, we can recover the limit of the spectral sequence and deduce the result. First of all, take into account that $\textnormal{Im}(F)$ is contractible. Furthermore, by \cref{DICIS}, the reduced homology of $D^k(f_t)$ could be non-trivial only in middle dimension, therefore, the groups
$$H_i\medpar{F\left(\mathcal{X}\right),f_t\left(X_t\right);\ZZ}$$
are possibly non-trivial when
\begin{align*}
	i&=r+ \dim D^{r+1}+1\\
	&= r+ p-(r+1)(p-n)+1\\
	&= p-(p-n)(r+1)+(r+1),
\end{align*}
for $2\leq r+1\leq d(f)$. 

The only other possible non-trivial entry, after collapsing the sequence, is $E^2_{d(f),0}$. This comes from the fact that the bottom row of the first page is an exact sequence except at $E^1_{d(f),0}$. This, in turn, comes from applying \cref{SecuenciaH} for $F$ and $Z=\varnothing$, deducing that the non-trivial part of the bottom row has to be exact because $F(\mathcal{X})$ is contractible (one can also apply the proof and statement of \cite[Lemma 3.3]{Houston2010} verbatim for this case).

Finally, when $r=0$ we have some homology apart from the 0-dimensional, because $D^1(f_t)=X_t$ and it is the stable perturbation of the \textsc{icis} $X$. The homology in this case appears when $i=0+\dim(X)+1$ and it is equal to $\mu(X)$.
Using the exact sequence of the homology of the pair, the result follows.
\qed\newline
\end{proof}

The argument of \cref{homotopydis} would work for any corank if we were able to prove that the alternating homology of the pairs $\medpar{D^k(F),D^k(f)}$ in any corank appear in the same dimensions as in corank one. Unfortunately, the techniques used for the smooth case do not give what we hope for.

\begin{table}[htb]
\begin{minipage}{0.41\textwidth}
\centering
\scalebox{0.81}{
\begin{tabular}{c | c@{\hspace{1.7\tabcolsep}}  c@{\hspace{1.7\tabcolsep}}  c@{\hspace{1.7\tabcolsep}}  c@{\hspace{1.7\tabcolsep}}  c@{\hspace{1.7\tabcolsep}}  c@{\hspace{1.7\tabcolsep}}  c@{\hspace{1.7\tabcolsep}}  c@{\hspace{1.7\tabcolsep}}  c@{\hspace{1.7\tabcolsep}}  c}
		$7	 $&$       $&$        $&$ 			$&$ 			$&$			 $&  			&$				$& &$				$&\\[1.5pt]
		$6	 $&$    \bullet   $&$        $&$ 			$&$ 			$&$			 $&  			&$				$&&$				$&\\[1.5pt]
		$5 $&$       $&$     \bullet   $&$ 			$&$ 			$&$			 $&  			&$				$&&$				$&\\[1.5pt]
		$4	 $&$       $&$        $&$ 		\bullet	$&$ 			$&$			 $&  			&$				$&&$				$&\\[1.5pt]
		$3	 $&$       $&$        $&$ 			$&$ 	\bullet		$&$			 $&  			&$				$&&$				$&\\[1.5pt]
		$2	 $&$       $&$        $&$ 			$&$ 			$&$		\bullet	 $&  			&$				$&&$				$&\\[1.5pt]
		$1	 $&$     $&$        $&$ 			$&$ 			$&$			 $&$ 	\bullet	$	&$				$&&$				$&\\[1.5pt] 
		$0	 $&$     $&$        $&$ 			$&$ 			$&$			 $& 			&$		\bullet		$&$\bullet$&$			\bullet	$&$\cdots$\\ \hline
			  \diagbox[dir=SW,innerwidth=0.4cm]{$r$}{$q$}      &$0$  &$1$&$2$&$3$&$4$&$5$&$6$&$7$&$8$&$\cdots$
		\end{tabular}}
		\end{minipage}%
		\begin{minipage}{0.59\textwidth}
		\centering
\vspace{-3.2mm}
		\scalebox{0.81}{\begin{tabular}{c | c@{\hspace{1\tabcolsep}}  c@{\hspace{1\tabcolsep}}  c@{\hspace{1\tabcolsep}}  c@{\hspace{1\tabcolsep}}  c@{\hspace{1\tabcolsep}}  c@{\hspace{1\tabcolsep}}  c@{\hspace{1\tabcolsep}}  c@{\hspace{1\tabcolsep}}  c@{\hspace{1.1\tabcolsep}} c}
		$H^{\Alt}_7	 $& & & & & &  			&$				$&$				$&$				$&$				$\\[1.5pt]
		$H^{\Alt}_6	 $&$       $&$        $&$ 			$&$ 			$&$			 $&  			&$				$&$				$&$				$&$				$\\[1.5pt]
		$H^{\Alt}_5	 $&$   X_t   $&$        $&$ 			$&$ 			$&$			 $&  			&$				$&$				$&$				$&$				$\\[1.5pt]
		$H^{\Alt}_4 $&$       $&$    f_t   $&$ 			$&$ 			$&$			 $&  			&$				$&$				$&$				$&$				$\\[1.5pt]
		$H^{\Alt}_3	 $&$       $&$        $&$ 		f_t	$&$ 			$&$			 $&  			&$				$&$				$&$				$&$				$\\[1.5pt]
		$H^{\Alt}_2	 $&$       $&$        $&$ 			$&$ 	f_t		$&$			 $&  			&$				$&$				$&$				$&$				$\\[1.5pt]
		$H^{\Alt}_1	 $&$       $&$        $&$ 			$&$ 			$&$		f_t $&$ 	\left(F,f_t\right) 	$&$				$&$				$&$				$&$				$\\[1pt]
		$H^{\Alt}_0	 $&$     $&$        $&$ 			$&$ 			$&$			 $&  			&$		F		$&$		F		$&$		F		$&$		\cdots		$ \\[0.15mm] \hline  \rule{0pt}{2.6ex}
			                     &$D^1$&$D^2$&$D^3$&$D^4$&$D^5$&$D^6$&$D^7$&$		D^8		$&$		D^9		$&$		\cdots		$\\
		\end{tabular}
}\end{minipage}
\caption{First page of the spectral sequence $E^1_{r,q}= H^{\Alt_{r+1}}_q\left(D^{r+1}\left(F\right),D^{r+1}\left(f_t\right)\right)$ for a map germ $f:(X,S)\rightarrow(\CC^6,0)$ in the pair of dimensions $(5,6)$ (left) and the \textsl{input} objects required to compute it after the identifications $H_m^\Alt\big(D^{r+1}(F),D^{r+1}(f_t)\big)\cong H_{m-1}^\Alt\big(D^{r+1}(f_t)\big)$, $m>1$, and $H_\bullet^\Alt\big(D^{r+1}(F),D^{r+1}(f_t)\big)= H_\bullet^\Alt\big(D^{r+1}(F)\big)$, $r+1>d(f)$ (right).}
\label{1pag}
\end{table}


The following lemma is stated for rational homology, the version with integer coefficients is explained in the remark that follows its proof.

\begin{lemma}[cf. {\cite[Theorem 4.6]{Houston1997}}] \label{corrangomayor}
Let $f:(X,S)\to(\CC^p,0)$ be an $\eqA$-finite map germ, where $(X,S)$ is a germ of an \textsc{icis} of dimension $n<p$ and codimension $r$. Consider a non-empty $D^k(f_t)$ and write $d$ for $\dim_\CC D^k(f_t)$. Then,
\begin{enumerate}[\itshape(i)]
	\item for $k\geq2$, $H^\Alt_q\medpar{D^k(f_t)}=0$ if $q\neq 0$ or $q\notin \left[d+(1-r)k,d\right]$, and
	\item $H^\Alt_q\medpar{D^1(f_t)}$ is zero for $q\neq0,d$.\label{item: alternatinghomology}
\end{enumerate}
\end{lemma}
\begin{proof}
Observe that $D^1(f_t)=X_t$ is the Milnor fibre of the \textsc{icis} $D^1(f)=X$, so we can assume that $k\geq2$. 

With rational coefficients, the homology of the alternating chains and the alternating part of the homology coincide (see \cite[Proposition 10.1]{Mond-Nuno2020}), so we can use them indistinctly. In this proof we use the homology of the alternating chains to prove the statement.

For $q\geq d$ the space $D^k(f_t)$ has the homotopy type of a $CW$-complex of dimension $d$, therefore, there are no alternating chains above $\dim_\CC D^k(f_t)$. This proves the result for those $q$. It only remains to check when $q<\dim_\CC D^k(f_t)$ for $k\geq2$. To prove the remaining part of the lemma, we take the proof of \cite[Theorem 4.6]{Houston1997} as reference. This argument consists of two steps: controlling the alternating homology of the pair $\medpar{D^k(F),D^k(f_t)}$ by virtue of \cite[Theorem 3.30]{Houston1997}, where $F$ is a one parameter unfolding of $f$, and specify exactly when this pair can have alternating homology using \cite[Theorem 3.13]{Houston1997}. 

The hypothesis of \cite[Theorem 3.13 and Theorem 3.30]{Houston1997} are not too restrictive, so we can take a one-parameter unfolding $F$ of $f$ and combine these theorems to prove that
$$H^\Alt_q\medpar{D^k(F),D^k(f_t)}=0 $$
for
\begin{align*} q&\leq \min\left\{(n+1-r+1)k-(p+1)(k-1)-1, nk-p(k-1)\right\} \\
&= nk-p(k-1)+(1-r)k=d+(1-r)k.
\end{align*}
Therefore, using the exact sequence of the pair and the fact that $D^k(F)$ contracts to isolated points in an equivariant way, we have that $H^\Alt_0\medpar{D^k(f_t)}\cong H^\Alt_0\medpar{D^k(F)}$ and $H^\Alt_{q-1}\medpar{D^k(f_t)}\cong H^\Alt_{q}\medpar{D^k(F),D^k(f_t)}$, for $q>1$.
\qed\newline
\end{proof}

\begin{remark}
As the reader should notice, this lemma can be stated for the homology of alternating chains (some times denoted by $AH$) instead of alternating homology and integer homology instead of rational homology. If one wants to use integer homology the same proof works changing $H^\Alt$ for $AH$ and adding in \cref{item: alternatinghomology} that the homology is free if $q=0,d$ (which is trivial, because the group acting is $\Sigma_1$). Nevertheless, \cref{homotopydiscorankr} is well stated as it is, changing to rational coefficients to write $H^\Alt$.
\end{remark}

\begin{corollary}\label{homotopydiscorankr}
With the hypotheses of \cref{corrangomayor}, the reduced integer homology of the image of a stable perturbation of $(X,f)$ is zero except possibly in dimensions 
\begin{enumerate}[\itshape(i)]
	\item $p-k(p-n)+k-1+s$ for all $0\leq s\leq (1-r)k$ and $2\leq k \leq d(f)$,\\
	\item $d(f)-1$  if $s(f)>d(f)$, and\\
  \item $n$ if $X$ is non-smooth (or $p = n + 1$).
\end{enumerate}
\end{corollary}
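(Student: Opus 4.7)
The plan is to adapt the proof of \cref{homotopydis} to higher corank, substituting \cref{corrangomayor} for the corank-one concentration of the alternating homology of the multiple point spaces. Fix a versal unfolding $(\mathcal{X},\pi,F,j)$ of $(X,f)$ and a regular value so that $(X_t,f_t)=F|_{\pi^{-1}(t)}$ is a stable perturbation. Applying \cref{SecuenciaH} to $F$ with $Z=X_t$ yields the spectral sequence
\[
E_1^{a,q}=H^{\Alt_{a+1}}_q\bigl(D^{a+1}(F),D^{a+1}(f_t);\ZZ\bigr)\Longrightarrow H_{a+q}\bigl(F(\mathcal{X}),f_t(X_t);\ZZ\bigr).
\]
Since $F(\mathcal{X})$ is contractible, $\tilde{H}_i(f_t(X_t);\ZZ)\cong H_{i+1}(F(\mathcal{X}),f_t(X_t);\ZZ)$, so it suffices to track the possibly non-vanishing diagonals of the abutment.

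To control the first page, I would combine \cref{corrangomayor} with the fact that $D^{a+1}(F)$ retracts equivariantly onto a finite set of points, so $H^\Alt_q(D^{a+1}(F))=0$ for $q\geq 1$. The long exact sequence of the pair then gives, for $k=a+1\geq 2$ and $q\geq 2$, an isomorphism $H^{\Alt_k}_q(D^k(F),D^k(f_t))\cong H^{\Alt_k}_{q-1}(D^k(f_t))$, with additional possibly non-zero contributions at $q=0,1$ from the $H^\Alt_0$ terms. By \cref{corrangomayor}(i), the right hand side is non-zero only for $q-1\in\{0\}\cup[d_k+(1-r)k,d_k]$, where $d_k=\dim D^k(f_t)=p-k(p-n)$; reading off the diagonals this populates after the shift produced by the abutment yields exactly the list of degrees in item (i).

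Item (ii) follows as in \cref{homotopydis}: the row $q=0$ of the first page is an exact sequence away from its final non-trivial column (this is \cref{SecuenciaH} applied with $Z=\emptyset$ together with the contractibility of $F(\mathcal{X})$), so at $E_2$ only one entry in that row can survive, and it is present precisely when $s(f)>d(f)$ by \cref{DP}, contributing in degree $d(f)-1$. Item (iii) comes from the column $a=0$, where $D^1(f_t)=X_t$ is the Milnor fibre of the \textsc{icis} $X$; by \cref{corrangomayor}(ii) its alternating homology is concentrated in degrees $0$ and $n$, and the associated $E_1^{0,n+1}$ term yields a possibly non-trivial contribution to $\tilde{H}_n(f_t(X_t))$ whenever $X$ is non-smooth.

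The main obstacle is conceptual rather than technical: unlike in corank one, the alternating homology of $D^k(f_t)$ now occupies an entire interval of length $(r-1)k+1$ rather than a single middle degree, so the first page is much wider and in principle many higher differentials $d_s$ could cancel entries. Since \cref{homotopydiscorankr} only asks for the set of degrees where the image homology \emph{may} be non-zero, it suffices to enumerate the diagonals hit by non-vanishing $E_1$ entries; sharpening this list to an actual rank computation would require controlling all higher differentials, which is exactly what the remark following \cref{homotopydis} describes as out of reach with current techniques.
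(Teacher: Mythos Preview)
Your proposal is correct and follows essentially the same route as the paper, whose proof is the single sentence ``follows from \cref{corrangomayor}, \cref{SecuenciaH} and a careful inspection of the \textsc{icss} as in \cref{homotopydis}''; you have simply written out what that inspection amounts to. One small point: your appeal to \cref{DP} for item (ii) is misplaced, since that lemma is stated under the corank-one hypothesis, whereas here you are in arbitrary corank; the bottom-row argument you give (exactness from \cref{SecuenciaH} with $Z=\varnothing$ and contractibility of $F(\mathcal X)$) already suffices, and the condition $s(f)>d(f)$ for a possible survivor is just the combinatorics of the $0$-homology of the multiple point spaces of $F$, which does not need corank one.
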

\begin{proof}
The proof follows from the version of \cref{corrangomayor} with integer coefficients and the homology of the alternating chains, \cref{SecuenciaH} and a careful inspection of the \textsc{icss} as in \cref{homotopydis}.
\qed\newline
\end{proof}

\begin{remark}
Observe that, if $X$ is a hypersurface, this theorem proves that the homology of the image appears in the same dimensions than the smooth case. Also, note that this theorem may not be sharp, because \cite[Theorem 3.13]{Houston1997} only gives a estimate to control the alternating homology of the pair $\medpar{D^k(F),D^k(f_t)}$ and it could be a bad estimate in general.
\end{remark}

 It is surprising that, in the case of corank one, the same proof does not prove \cref{homotopydis}. This makes us think that there is an argument that avoids the detail of the codimension of the \textsc{icis}:

\begin{conjecture}
Consider a map germ $f:(X,S)\rightarrow  (\CC^{p},0)$ of finite $\eqA_e$-codimension, with $X$ \textsc{icis} of dimension $\dim X=n<p$. Then, the reduced integer homology of the image of a stable perturbation of $(X,f)$ is zero except possibly in dimensions 
\begin{enumerate}[\itshape(i)]
	\item $p-k(p-n)k+k-1$ for all $2\leq k \leq d(f)$,\\
	\item $d(f)-1$  if $s(f)>d(f)$, and\\
  \item $n$ if $X$ is non-smooth.
\end{enumerate}
\end{conjecture}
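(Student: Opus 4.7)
The approach is to apply the ICSS of \cref{SecuenciaH} to a versal unfolding $(\mathcal{X},\pi,F,j)$ of $(X,f)$ and a stable perturbation $f_t=F|_{X_t}$, exactly as in the proof of \cref{homotopydis}. Since $F(\mathcal{X})$ is contractible, the abutment computes the reduced homology of $f_t(X_t)$ shifted by one. The corank-one argument rested on \cref{DICIS}, which provides that $H^{\Alt_k}_q\medpar{D^k(f_t)}$ is concentrated in the top degree $d_k:=p-k(p-n)$ for $k\geq 2$; this collapses the $E_1$-page onto one diagonal together with its bottom row, and the bottom row is exact up to position $d(f)$ by applying \cref{SecuenciaH} with $Z=\varnothing$. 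The plan therefore reduces to establishing the sharp alternating vanishing
\[
H^{\Alt_k}_q\medpar{D^k(f_t);\ZZ}=0 \quad \text{for all } k\geq 2 \text{ and } q<d_k,
\]
without any corank hypothesis. Granting this, items (i), (ii) and (iii) of the conjecture follow from exactly the same bookkeeping as in \cref{homotopydis}: (iii) from $D^1(f_t)$ being the Milnor fibre of the ICIS $X$, (ii) from the length of the exact bottom row via \cite[Lemma 3.3]{Houston2010}, and (i) from the surviving diagonal.

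For the alternating vanishing the strategy is equivariant. Fix a stable unfolding $F$ whose total space $\mathcal{X}$ is the smooth base of a versal unfolding of $X$, so that $D^k(F)$ is smooth of dimension $d_k+rk$, where $r=\codim(X,\mathcal{X})$. If $h_1,\ldots,h_r$ cut out $X$ inside $\mathcal{X}$, then $D^k(f)$ is the zero locus in $D^k(F)$ of the $rk$ functions $h_i(x_j)$ obtained by pulling back each $h_i$ through the $k$ coordinate projections $(x_1,\ldots,x_k)\mapsto x_j$. The idea is to run a $\Sigma_k$-equivariant iterated L\^e--Greuel argument on this presentation: slice $D^k(F)$ symmetrically by orbit combinations of these functions, take the alternating part of the long exact sequence of each pair, and shrink the range in which alternating homology can live at each equivariant step. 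The target is to collapse the range $[d_k+(1-r)k,\,d_k]$ provided by \cref{corrangomayor} down to the single top degree $d_k$.

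The hard part will be that the individual functions $h_i(x_j)$ are not $\Sigma_k$-invariant; only their elementary symmetric combinations are, and passing to these combinations introduces non-reduced scheme structure whose interaction with alternating homology is delicate. One promising workaround is to view $\medpar{h_i(x_j)}$ as an equivariant section of the trivial rank-$rk$ $\Sigma_k$-equivariant vector bundle over $D^k(F)$, with $\Sigma_k$ permuting the $k$ copies of $\CC^r$, and to develop a $\Sigma_k$-equivariant Koszul/L\^e--Greuel spectral sequence whose sign isotype can be read off representation-theoretically. An alternative route is to adapt the wedge-of-spheres argument of \cite{Goryunov1995} equivariantly, using that $D^k(F)$ is a smooth affine variety and hence, by Andreotti--Frankel, has the homotopy type of a CW-complex of dimension $d_k+rk$. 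In either case, the genuinely new input required is a refinement of the Kato--Matsumoto vanishing used in \cite[Theorem 3.13]{Houston1997} that improves the alternating range by the full factor $rk$, and producing such a refinement is, in our view, the essential obstacle of the conjecture.
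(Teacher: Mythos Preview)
The statement you are addressing is stated in the paper as a \emph{conjecture}; the authors do not give a proof, and in fact explicitly write that the argument of \cref{homotopydis} ``would work for any corank if we were able to prove that the alternating homology of the pairs $\medpar{D^k(F),D^k(f)}$ in any corank appear in the same dimensions as in corank one,'' but that ``the techniques used for the smooth case do not give what we expect.'' What they do prove is the weaker \cref{corrangomayor} (and \cref{homotopydiscorankr}), which only forces the alternating homology of $D^k(f_t)$ into the interval $[d_k+(1-r)k,\,d_k]$.

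Your proposal is therefore not being compared against an existing proof: there is none. And you yourself are not presenting a proof either; you correctly reduce the problem to the sharp alternating vanishing $H^{\Alt_k}_q\medpar{D^k(f_t)}=0$ for $q<d_k$, and then describe two plausible equivariant strategies while stating explicitly that the ``essential obstacle'' is to improve the Kato--Matsumoto type bound in \cite[Theorem 3.13]{Houston1997} by the full factor $rk$. That identification of the obstacle matches the paper's own assessment precisely. So your write-up is a reasonable research outline for attacking the conjecture, but it should not be labelled a proof: the key vanishing step is still open, and neither your equivariant L\^e--Greuel/Koszul idea nor the equivariant Andreotti--Frankel route is carried to the point of yielding the required collapse of the range $[d_k+(1-r)k,\,d_k]$ down to $\{d_k\}$.
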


\begin{remark}
This conjecture and \cref{{homotopydiscorankr,homotopydis}} are related to \cite[Theorems 2.3 and 2.8]{Liu2021}, when the source is smooth and the map germ is not necessarily $\eqA$-finite but the dimensions of the multiple point spaces are controlled. They, and \cref{Df}, are also closely related with \cite[Theorem 2.4]{Liu2021} in the particular case that $X$ is the double point space $D^2(f)$.
\end{remark}

Houston also uses \cref{SecuenciaH} in \cite{Houston2010} with a versal unfolding $F$, of a multi-germ $f:(\CC^n,S)\to(\CC^{n+1},0)$, and a section that gives the stable perturbation, $f_t$. Also, taking into account that the Euler-Poincaré characteristic of every page remains invariant, see \cite[Example 1.F]{McCleary2001}, and that the image of the versal unfolding is contractible, it remains to compute $\chi\left(E^1_{*,*}\right)$ to get $\mu_I(f)$, and the terms of the sum are arranged to define Houston's alternating Milnor numbers, $\mu_k^\Alt(f)$ (actually, he computes them through the limit of the spectral sequence, both ways give the same result). 

These ideas and the previous proofs inspire the following definition (see also the simplification and developments of it in \cite{GimenezConejero2021}, particularly how to determine $d(f)$ when $s(f)>d(f)$ with \cite[Lemma 3.3]{GimenezConejero2021}).

\begin{definition}\label{mukalticis}
Given an $\eqA$-finite map germ $f:(X,S)\rightarrow (\CC^{n+1},0)$ of corank one, with $X$ \textsc{icis} of dimension $n$, the $k$\textit{-th alternating Milnor number of }$(X,f)$, denoted as $\mu_k^\Alt(X,f)$, is defined by
	\[ \mu_k^\Alt(X,f)\coloneqq\begin{cases}
	\textnormal{rank } H^{\Alt_{k}}_{n-k+2}\medpar{D^k(F),D^k(f_t);\ZZ}, \quad  \quad\text{if $1\leq k\leq d(f)$}\\ \\
	\displaystyle{s(f)-1\choose d(f)}, \quad\quad\text{ if $k=d(f)+1$ and $s(f)>d(f)$}\\ \\
	0,\quad  \quad\text{otherwise},
	\end{cases}
\]
being $F$ its versal unfolding and $f_t$ a stable perturbation.
\end{definition}

These numbers are very useful because they decompose the image Milnor number. This was used by Houston in \cite[Definition 3.11]{Houston2010} for the smooth case of corank one, we extend it to the non-smooth case of corank one.

\begin{proposition}\label{Houston formula}
For $f:(X,S)\rightarrow (\CC^{n+1},0)$ $\eqA$-finite of corank one and $X$ an \textsc{icis} of dimension $n$,
$$ \sum_k \mu_k^\Alt\left(X,f\right)=\mu_I\left(X,f\right).$$
\end{proposition}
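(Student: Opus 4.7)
The plan is to apply the image-computing spectral sequence (\cref{SecuenciaH}) to a versal unfolding $F$ of $(X,f)$ together with its restriction to a fibre $Z=X_t$ giving a stable perturbation $(X_t,f_t)$. This yields
$$E_1^{r,q} = H^{\Alt_{r+1}}_q\medpar{D^{r+1}(F), D^{r+1}(f_t);\ZZ} \Longrightarrow H_{r+q}\medpar{F(\mathcal{X}), f_t(X_t); \ZZ}.$$
The strategy is then to compare the Euler-Poincaré characteristic of the first page (invariant on every page of a spectral sequence, cf. \cite[Example 1.F]{McCleary2001}) with that of the abutment and to identify each term with some $\mu_k^\Alt(X,f)$.

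First I would compute the abutment. Since $F(\mathcal{X})$ is contractible for a suitable representative of the versal unfolding, and $f_t(X_t)$ has the homotopy type of a wedge of $\mu_I(X,f)$ copies of $S^n$ by \cref{muICIS}, the long exact sequence of the pair shows that $H_\ast(F(\mathcal{X}), f_t(X_t))$ is free of rank $\mu_I(X,f)$ concentrated in degree $n+1$. Consequently $\chi\medpar{H_\ast(F(\mathcal{X}), f_t(X_t))} = (-1)^{n+1} \mu_I(X,f)$.

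Next I would locate the non-trivial entries of $E_1$. By the argument in the proof of \cref{homotopydis} together with \cref{DICIS}, for every $1 \leq k \leq d(f)$ the alternating homology of the pair $(D^k(F), D^k(f_t))$ is concentrated in degree $n-k+2$. The $k=1$ case exploits that $D^1(F) = \mathcal{X}$ is contractible while $D^1(f_t) = X_t$ is the Milnor fibre of the \textsc{icis} $X$, giving rank $\mu(X) = \mu_1^\Alt(X,f)$; for $2\leq k\leq d(f)$ the rank is $\mu_k^\Alt(X,f)$ by definition. All these entries lie on the antidiagonal $r+q=n+1$, hence contribute with the common sign $(-1)^{n+1}$. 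When $s(f) > d(f)$, the bottom row $q=0$ adds an extra contribution: by a verbatim application of \cite[Lemma 3.3]{GimenezConejero2021} (whose proof depends only on the dimensions of the multiple point spaces), the bottom row collapses onto a single non-zero entry in column $D^{d(f)+1}$ of rank $\binom{s(f)-1}{d(f)} = \mu_{d(f)+1}^\Alt(X,f)$; moreover, $s(f)>d(f)$ forces $d(f) = n+1$ (the maximum multiplicity in the pair of dimensions $(n,n+1)$), so this entry sits at position $(n+1,0)$, again on the antidiagonal $r+q=n+1$ and with sign $(-1)^{n+1}$.

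Combining these identifications one gets $\chi(E_1^{\ast,\ast}) = (-1)^{n+1}\sum_k \mu_k^\Alt(X,f)$, which, by invariance of the Euler characteristic across pages, equals the abutment $(-1)^{n+1}\mu_I(X,f)$, yielding the formula. The main technical obstacle I anticipate is the bookkeeping needed to confirm that every non-trivial entry of the $E_1$ page really lies on the single antidiagonal $r+q=n+1$: the non-smooth $D^1$ contribution $\mu(X)$ must be correctly placed (this has no analogue in Houston's smooth case), and the multi-germ bottom-row contribution $\mu_{d(f)+1}^\Alt(X,f)$ only lands on the correct antidiagonal thanks to the observation that the hypothesis $s(f)>d(f)$ forces $d(f)=n+1$; without that alignment the signs in $\chi(E_1)$ would not collapse to a single $(-1)^{n+1}$ and the formula would fail.
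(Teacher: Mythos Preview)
Your proof is correct and follows essentially the same approach as the paper: both use the \textsc{icss} of \cref{SecuenciaH} for the pair $(F,f_t)$, identify the non-trivial entries as the $\mu_k^{\Alt}(X,f)$, and match the total rank (equivalently, the Euler characteristic) against the abutment $H_{n+1}\medpar{F(\mathcal X),f_t(X_t)}$, which has rank $\mu_I(X,f)$. The paper is terser because it simply refers back to the proof of \cref{homotopydis} for the structure of the $E_2$ page and then verifies the rank of the single surviving bottom-row entry; you spell out the same computation directly.

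Two small points. First, your citation for the bottom-row rank is misplaced: \cite[Lemma~3.3]{GimenezConejero2021} is the statement that $s(f)\ge d(f)$ forces $d(f)=n+1$ (which you use correctly in the next sentence), not the computation of $\rank E_2^{d(f),0}=\binom{s(f)-1}{d(f)}$. For the latter the paper invokes \cite[Lemma~3.3]{Houston2010} or, equivalently, the alternating-sum identity $\bigl|\sum_{\ell=d(f)+1}^{s(f)}(-1)^\ell\binom{s(f)}{\ell}\bigr|=\binom{s(f)-1}{d(f)}$. Second, your presentation mixes $E_1$ and $E_2$ data (you take $E_1$ ranks on the antidiagonal but the collapsed $E_2$ rank on the bottom row); this is harmless because $\chi$ is invariant across pages, but it would be cleaner to say you are computing $\chi(E_2)$ throughout, since the antidiagonal entries survive unchanged to $E_2$.
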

\begin{proof}
From the proof of \cref{homotopydis}, we only have to check that $\mu^\Alt_{d(f)+1}(X,f)$ coincides with the (rank of the) remaining non-zero entries of the spectral sequence after collapsing, i.e., we have to check that
$$\rank E^2_{d(f),0}={s\left(f\right)-1\choose d\left(f\right)}.$$
From \cite[Lemma 3.3]{Houston2010}, which can be stated for general stable map germs with a verbatim proof, or the constancy of the Euler-Poincaré characteristic of the spectral sequence (see \cite[Example 1.F]{McCleary2001}) we have
\[\pushQED{\qed} \rank E^2_{d(f),0}=\left|\sum_{\ell=d\left(f\right)+1}^{s\left(f\right)}\left(-1\right)^\ell{s\left(f\right) \choose \ell}\right|={s\left(f\right)-1\choose d\left(f\right)}.
\qedhere
\popQED\]
\end{proof}

One term deserves a bit of attention: $\mu_1(X,f)$. This term does not appear in the smooth case because it is zero but, in  general, it is equal to $\mu(X)$ (see \cref{fig:muICIS2}). \cref{Houston formula} allows us to reduce the weak form of Mond's conjecture for \textsc{icis} to the smooth case.

\begin{figure}[H]
	\centering
		\includegraphics[width=1\textwidth]{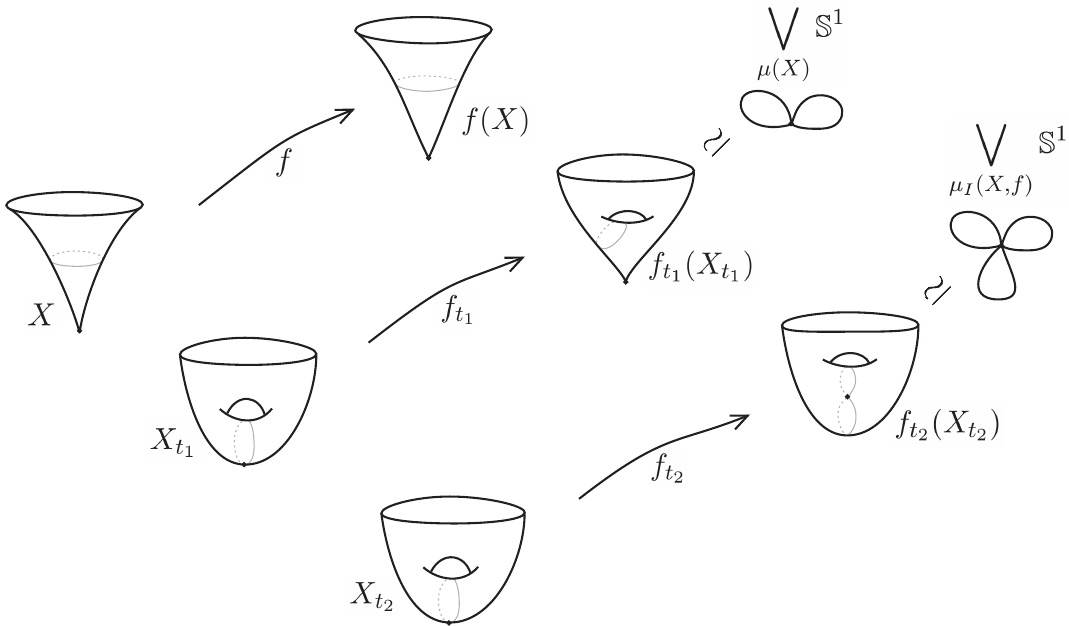}
	\caption{The first alternating Milnor number and its relationship with the deformations of $(X,f)$ and the $\mu_I(X,f)$. Here, one can also appreciate the conclusions of \cref{conservation,uscontinuity}.}
	\label{fig:muICIS2}
\end{figure}

\begin{corollary}[cf. {\cite[Theorem 3.9]{GimenezConejero2021}}] \label{CDMicis}
For $f:(X,S)\rightarrow (\CC^{n+1},0)$ $\eqA$-finite of corank one and $X$ an \textsc{icis} of dimension $n$, $\mu_I(X,f)=0$ if, and only if, $(X,f)$ is stable.
\end{corollary}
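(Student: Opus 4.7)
\medskip
\noindent\textbf{Proof proposal.}
The plan is to combine the Houston decomposition from \cref{Houston formula} with the smooth-source version \cite[Theorem 3.9]{GimenezConejero2021} via the remark after \cref{sumaAes} which identifies stability of $(X,f)$ with smoothness of $X$ plus usual stability of $f$.

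For the direct implication, assume $(X,f)$ is stable. By definition every unfolding of $(X,f)$ is trivial, so any stabilisation $(X_s,f_s)$ is $\eqA$-equivalent to $(X,f)$ itself. In particular $f_s(X_s)$ is biholomorphic to $f(X)$ inside a Milnor ball, hence it is contractible and $\mu_I(X,f)=0$. (Alternatively, $\eqA_e\text{-}\codim(X,f)=0$ forces, via \cref{sumaAes}, both $\tau(X,0)=0$ and $\eqA_e\text{-}\codim(f)=0$, and the claim then follows from the smooth case.)

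For the converse, suppose $\mu_I(X,f)=0$. By \cref{Houston formula},
\[
0=\mu_I(X,f)=\sum_k \mu_k^{\Alt}(X,f),
\]
and each term is a non-negative integer, so $\mu_k^{\Alt}(X,f)=0$ for every $k$. I would first look at the $k=1$ term. By \cref{mukalticis}, $\mu_1^{\Alt}(X,f)=\rank H_{n+1}(D^1(F),D^1(f_t);\ZZ)=\rank H_{n+1}(\mathcal X,X_t;\ZZ)$. Since $\mathcal X$ is the total space of a versal unfolding of the \textsc{icis} $X$ it is smooth and contractible, and the long exact sequence of the pair gives $H_{n+1}(\mathcal X,X_t)\cong \tilde H_n(X_t)\cong \ZZ^{\mu(X)}$, using that $X_t$ is the Milnor fibre of $X$ and thus has the homotopy type of a wedge of $n$-spheres. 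Hence $\mu_1^{\Alt}(X,f)=\mu(X)$, and the vanishing forces $\mu(X)=0$, i.e.\ $(X,S)$ is smooth.

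Once $(X,S)$ is smooth we are in the standard setting of a corank one, $\eqA$-finite map germ $f\colon(\CC^n,S)\to(\CC^{n+1},0)$ with $\mu_I(f)=\mu_I(X,f)=0$; applying \cite[Theorem 3.9]{GimenezConejero2021} yields that $f$ is stable in the usual sense, and the remark after \cref{sumaAes} then gives that $(X,f)$ is stable. I expect the main technical subtlety to be the identification $\mu_1^{\Alt}(X,f)=\mu(X)$: one must be careful that the pair $(D^1(F),D^1(f_t))$ is really the versal total space together with a Milnor fibre of $X$, and that the relevant Betti number in \cref{mukalticis} lands in degree $n+1$ so the exact sequence collapses to the Milnor number of $X$; everything else is a clean combination of already proved statements.
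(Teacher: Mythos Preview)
Your proposal is correct and follows essentially the same route as the paper: use \cref{Houston formula} to force $\mu_1^{\Alt}(X,f)=\mu(X)=0$, deduce smoothness of $X$, and then invoke the smooth-source case \cite[Theorem~3.9]{GimenezConejero2021}. The paper simply asserts $\mu_1^{\Alt}(X,f)=\mu(X)$ (in the discussion immediately following \cref{Houston formula}), while you supply the extra detail via the long exact sequence of the pair $(\mathcal X,X_t)$; this is a welcome elaboration rather than a different argument.
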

\begin{proof}
One direction is trivial.

If $\mu_I(X,f)=0$ then $\mu_1^\Alt(X,f)=\mu(X)=0$ and we are in the case of smooth domain, proved in \cite[Theorem 3.9]{GimenezConejero2021}.
\qed\newline
\end{proof}


\begin{remark}
Note that the weak form of Mond's conjecture for the smooth case in any corank implies, with the same proof of \cref{CDMicis}, the same conjecture for \textsc{icis} in any corank by means of \cref{Houston formula}, which can be stated for any corank using \cref{corrangomayor} and always carries a term equal to $\mu(X)$.
\end{remark}
\section{A Lê-Greuel type formula}\label{sec:Le-Greuel}

Now that we have a basic building of the image Milnor number with \textsc{icis} in the source, our last preparatory step is to prove a Lê-Greuel type formula for $\mu_I(X,f)$. In \cite{Nuno-Ballesteros2019} I. Pallarés-Torres and the second named author proved a Lê-Greuel type formula in the setting of the image Milnor number in the smooth case and finitely determined map germs. Recall the original Lê-Greuel formula, see \cite{Greuel1975,Trang1974a},
$$ \mu(X,0)+\mu(X\cap H,0)=\dim_\CC \frac{\mathcal{O}_n}{(g)+J(g,p)}.$$
Here, $(X,0)$ is an \textsc{icis} with defining equation $g$, $p$ a function such that $(X\cap H,0) $ is an \textsc{icis} as well, where  $H\coloneqq p^{-1}(0)$, and $J(g,p)$ is the ideal generated by the minors of maximum order of the Jacobian matrix of $(g,p)$. Taking into account that the right hand side of the equation could be seen as the number of critical points of $p$ restricted to the Milnor fiber of $X$, it is obvious that the main theorem of \cite{Nuno-Ballesteros2019} is a similar result for the context of map germs when $p$ is a generic linear projection (see \cref{figuralegreuel}):

\begin{theorem}[see {\cite[Theorem 3.2]{Nuno-Ballesteros2019}}] \label{LeGreuel}
Let $f:\Gzero{n}{n+1}$ be a corank 1 and $\eqA$-finite map germ with $n>1$. Let $p:\CC^{n+1}\rightarrow\CC$ be a generic linear projection which defines a transverse slice $g:\Gzero{n-1}{n}$. Then
$$\textnormal{\texttt{\#}}\Sigma\left(\left.p\right|_{Z_s}\right)=\mu_I\left(f\right)+\mu_I\left(g\right),$$
where $\textnormal{\texttt{\#}}\Sigma\left(\left.p\right|_{Z_s}\right)$ is the number of critical points on all the strata of $Z_s\coloneqq \text{Im}\left(f_s\right)$, being $f_s$ a stable perturbation of $f$.
\end{theorem}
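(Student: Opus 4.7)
The plan is to compare the relative homology $H_*(Z_s,F;\ZZ)$ of a stable perturbation and its generic hyperplane section with a Morse-theoretic count of the critical points of $p|_{Z_s}$, in the spirit of the classical L\^e-Greuel argument for the Milnor fibre of an \textsc{icis}.

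First, fix a stable perturbation $f_s$ of $f$ with image $Z_s:=f_s(X)$ inside a Milnor ball $B_\epsilon\subset\CC^{n+1}$. By Mond's theorem (cf.\ \cref{muICIS}), $Z_s$ has the homotopy type of a wedge of $\mu_I(f)$ spheres of dimension $n$. The genericity of $p$ guarantees that $g$ is a transverse slice and that, for generic $c$ close to $0$, the fibre $F:=Z_s\cap p^{-1}(c)$ is the image of a stable perturbation of $g$ inside a Milnor ball of $p^{-1}(c)\cong\CC^n$; hence $F$ has the homotopy type of a wedge of $\mu_I(g)$ spheres of dimension $n-1$. Using $n>1$, all reduced homologies are concentrated in a single positive degree each, and the long exact sequence of the pair $(Z_s,F)$ collapses to
$$ 0 \longrightarrow \ZZ^{\mu_I(f)} \longrightarrow H_n(Z_s,F;\ZZ) \longrightarrow \ZZ^{\mu_I(g)} \longrightarrow 0, $$
so $\textnormal{rank}\, H_n(Z_s,F;\ZZ) = \mu_I(f)+\mu_I(g)$ and $H_i(Z_s,F;\ZZ)=0$ for $i\neq n$.

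The second step is to identify $\textnormal{rank}\, H_n(Z_s,F;\ZZ)$ with $\textnormal{\texttt{\#}}\Sigma(p|_{Z_s})$. By genericity, $p|_{Z_s}$ has only finitely many critical points on the Whitney stratification of $Z_s$ by stable types, and one applies stratified Morse theory in the complex analytic setting: the pair $(Z_s,F)$ is built up from $F$ by successively passing each critical value of $p|_{Z_s}$, and each critical point contributes a single generator in degree $n$ to the relative homology (holomorphic Morse theory on the stratum yields a cell in the tangential direction, and the normal Morse data combines with it to give a single $n$-cell). Equating the two computations yields $\textnormal{\texttt{\#}}\Sigma(p|_{Z_s})=\mu_I(f)+\mu_I(g)$.

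The main obstacle is the verification that each critical point on a lower-dimensional stratum of $Z_s$ (those coming from the double, triple,\ldots\ point loci of $f_s$) contributes exactly one generator in degree $n$; this depends on the precise local structure of the image of a stable map at each multiple-point stratum, and is where the genericity of $p$ must be used most essentially. An alternative route, adopted in \cite{Nuno-Ballesteros2019} for the smooth-source case, is to realise $Z_s$ as the zero set of a defining equation $G$, regard $p$ as the parameter of a one-parameter deformation that is topologically trivial over a Milnor sphere, and then apply Siersma's \cref{siersma} directly to relate the vanishing cycles of the nearby and special fibres to $\mu_I(g)$ and to $\mu_I(f)$ plus the critical-point count.
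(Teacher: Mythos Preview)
Your approach is genuinely different from the one the paper uses. In this paper, Theorem~\ref{LeGreuel} is quoted from \cite{Nuno-Ballesteros2019}, and the surrounding argument in Section~4 (which re-derives it en route to the {\sc icis} generalisation, Theorem~\ref{LeGreuelICIS}) proceeds entirely through the multiple point spaces: one applies Marar's formula (Theorem~\ref{MararFormula}) for $\chi\big(f_t(X_t)\big)$ in terms of the $\chi\big(D^k(f_t,\gamma(k))\big)$, writes the analogous formula for the slice $g$, subtracts, and then invokes the classical L\^e--Greuel formula for each {\sc icis} $D^k(f,\gamma(k))$ to convert $\mu\big(D^k(f,\gamma(k))\big)+\mu\big(D^k(g,\gamma(k))\big)$ into a count of critical points of $\tilde p$ on the Milnor fibre. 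A final combinatorial identity (carried out in \cite{Nuno-Ballesteros2019}) reassembles these stratum-by-stratum counts on the $D^k$ side into the count $\texttt{\#}\Sigma(p|_{Z_s})$ on the image. Nowhere does the argument touch stratified Morse theory on $Z_s$ directly.

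Your first step, the long exact sequence of $(Z_s,F)$ giving $\operatorname{rank}H_n(Z_s,F)=\mu_I(f)+\mu_I(g)$, is correct and clean. The gap is exactly where you locate it: the claim that each stratified critical point of $p|_{Z_s}$ contributes a single generator in degree $n$. In Goresky--MacPherson terms, at a critical point on a stratum of complex dimension $d$ the local Morse data is the join of a $d$-cell with the cone on the complex link $\mathcal L$ of the stratum in $Z_s$; for the contribution to sit purely in degree $n$ you need $\tilde H_*(\mathcal L)$ concentrated in degree $n-d-1$. For the image of a stable corank-one germ this amounts to knowing, for every stable type $\mathcal Q$, that a transverse slice to $\mathcal Q(f_s)$ in $Z_s$ has the homotopy type of a wedge of $(n-d-1)$-spheres. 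That is true, but establishing it is essentially equivalent to the statement you are trying to prove applied to the local model at $\mathcal Q$, so it is not a shortcut. The Marar-formula route sidesteps this by working on the $D^k$ side, where everything is an {\sc icis} and the classical L\^e--Greuel formula applies directly.

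One further remark: your closing sentence attributes a Siersma-based argument to \cite{Nuno-Ballesteros2019}, but the evidence in this paper (``This is exactly the same point \ldots\ reach in \cite[Theorem 3.2]{Nuno-Ballesteros2019}'') indicates that the original proof there also goes through Marar's formula, not through \cref{siersma}.
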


If $p\colon\CC^{n+1}\rightarrow\CC$ is a generic linear projection, then the \emph{transverse slice} is, by definition, the restriction $f\colon\big(f^{-1}(H),0\big)\to(H,0)$, where $H$ is the hyperplane $H=p^{-1}(0)$. When $f$ has corank one, we can choose coordinates in such a way that the transverse slice is in fact a mapping $g:\Gzero{n-1}{n}$ and $f$ is an unfolding of $g$ (see \cite[pp. 1380--1381]{Marar2014}).

\begin{figure}[ht]
	\centering
		\includegraphics[width=0.85\textwidth]{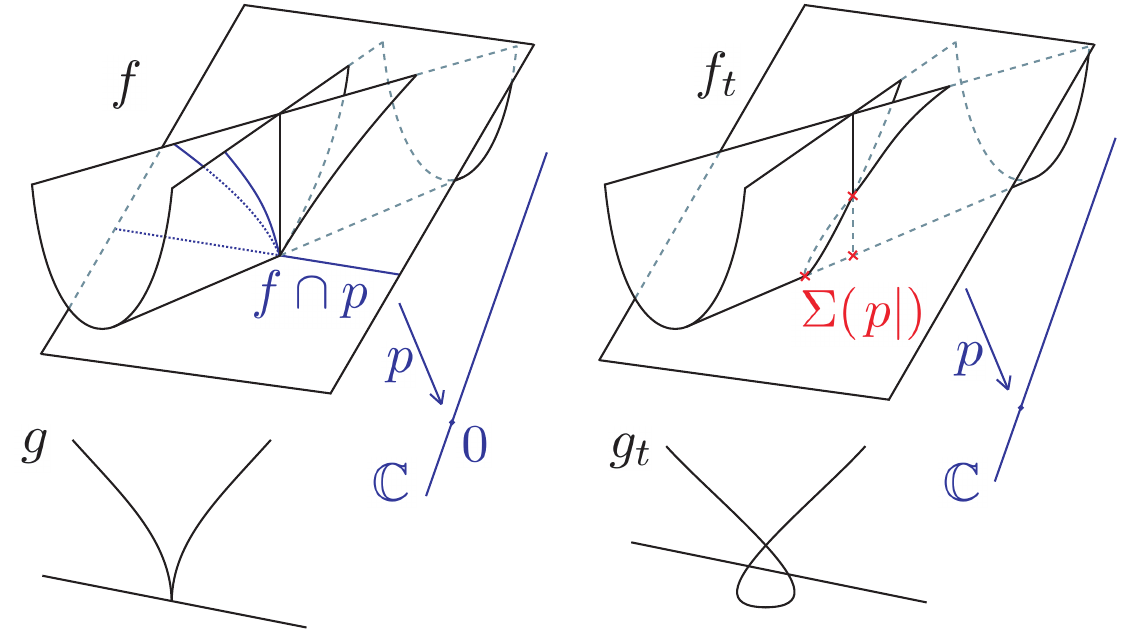}
		\caption{Depiction of the Lê-Greuel type formula for map germs.}
	\label{figuralegreuel}
\end{figure}

The stratification considered in the image of the stable perturbation $f_s$ in the theorem above is the \emph{stratification by stable types} (see \cite[Definition 7.2]{Mond-Nuno2020}). We recall that two points $y_1$ and $y_2$ in the image of $f_s$ belong to the same stratum if, and only if, the multi-germs of $f_s$ at such points are $\eqA$-equivalent. Since $f_s$ has only corank one singularities, each stratum is determined by a partition $\gamma(k)$, with $1\le k\le n+1$ and it is, in fact, a Whitney stratification (see \cite[Corollary 7.5]{Mond-Nuno2020}).

We prove a similar result for an \textsc{icis} in the source and multi-germs. The first step is to work with Marar's formula  for our setting of multi-germs and \textsc{icis} in the source. Fortunately, his proof is essentially combinatorial and one can prove the version we need with almost no modifications. Hence, if $\gamma(k)=(r_1,\dots,r_m)$ is a partition of $k$ and $\alpha_i=\texttt{\#}\left\{j:r_j=i\right\}$, Marar's formula is the following.

\begin{theorem}[cf. {\cite[Theorem 3.1]{Marar1991}}]\label{MararFormula}
Let $f_0:(\CC^n,0)\rightarrow (\CC^p,0)$ of corank 1 and $\eqA$-finite, $2\leq n<p$, and consider its stable perturbation $f_t:X_t\rightarrow \CC^p$. Then
$$ \chi\medpar{f_t(X_t)}=a_0\chi\left(U_t\right)+\sum_{k\geq 2} \sum_{\gamma(k)}a_{\gamma(k)}\chi\left(D^k\medpar{f_t,\gamma(k)}\right),$$
where $a_0=1$ and
$$ a_{\gamma(k)}=\frac{(-1)^{\sum \alpha_i+1}}{\prod_{i\geq1}i^{\alpha_i}\alpha_i!}$$
if $D^k\medpar{f_t,\gamma(k)}$ is non-empty, and zero otherwise.
\end{theorem}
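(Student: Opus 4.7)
My plan is to follow Marar's combinatorial strategy from \cite{Marar1991} almost verbatim: the argument depends only on the corank-one hypothesis and on the structure of stable types in the image, not on the codimension $p-n$, so the only step that requires justification is that the same local bookkeeping survives for arbitrary $n<p$ once \cref{DICIS} and \cref{DP} are in hand.

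First, I would stratify $Y_t:=f_t(X_t)$ by stable types. Since $f_t$ is stable and of corank one, each stable mono-germ is of type $A_{k-1}$ of multiplicity $k$, and every stable multi-germ is a transverse union of mono-germs. Consequently the stable type at $y\in Y_t$ is encoded exactly by the partition $\gamma(k)=(k_1,\dots,k_m)$ recording the multiplicities of the preimages of $y$. By \cref{DP} the subspaces $D^k(f_t,\gamma(k))$ are smooth, and the decomposition $Y_t=\bigsqcup_\gamma S_\gamma$ into stable-type strata is Whitney. Additivity of the Euler characteristic yields
\[
\chi(Y_t)=\sum_{\gamma}\chi(S_\gamma).
\]

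Second, for each partition $\gamma'(k')$ I would consider the finite projection $D^{k'}(f_t,\gamma'(k'))\to Y_t$ sending a tuple to the common image of its entries. Over each $S_\gamma$ this is an unramified cover of constant degree $N(\gamma,\gamma')$, and set-theoretically a $\sigma$-fixed tuple over $y\in S_\gamma$ is specified by assigning one preimage of $y$ to each cycle of $\sigma$, subject to the multiplicity constraint that the total cycle-length assigned to $x_i$ does not exceed $k_i$. Additivity then gives
\[
\chi\bigl(D^{k'}(f_t,\gamma'(k'))\bigr)=\sum_{\gamma}N(\gamma,\gamma')\,\chi(S_\gamma).
\]
Identifying the term $a_0\chi(U_t)=\chi(X_t)=\chi(D^1(f_t))$ with the $k'=1$, $\gamma'=(1)$ summand (for which $a_{(1)}=1=a_0$), the right-hand side of Marar's formula rewrites as
\[
\sum_{\gamma}\chi(S_\gamma)\Bigl(\sum_{k'\geq1,\,\gamma'(k')}a_{\gamma'(k')}N(\gamma,\gamma')\Bigr),
\]
so that the result reduces to the pointwise combinatorial identity
\[
\sum_{k'\geq1,\,\gamma'(k')}a_{\gamma'(k')}N(\gamma,\gamma')=1\qquad\text{for every partition }\gamma.
\]

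The main obstacle is proving this identity. The denominator $\prod_i i^{\alpha_i}\alpha_i!$ in $a_{\gamma'(k')}$ is precisely the order of the centraliser of a permutation of cycle type $\gamma'$ in $\Sigma_{k'}$, so the inner sum is a cycle-index evaluation. In the unconstrained situation (where $N(\gamma,\gamma')=m^{M(\gamma')}$ with $m$ the number of preimages and $M(\gamma')$ the number of cycles of $\gamma'$), Polya enumeration plus the binomial identity $\sum_{k'\geq1}(-1)^{k'+1}\binom{m}{k'}=1$ for $m\geq 1$ already yields the value $1$. The presence of the multiplicity constraint $c_i\leq k_i$ is handled by factoring $N(\gamma,\gamma')$ into local contributions at each preimage and checking, by an inductive M\"obius-type argument on the total multiplicity $k=\sum k_i$, that the forbidden configurations cancel out after weighting by $a_{\gamma'}$. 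Since this bookkeeping is entirely local at each $y$ and depends only on the partition $\gamma$, not on the codimension, Marar's combinatorial argument applies with no essential modification, and summing the pointwise identity against $\chi(S_\gamma)$ yields the claimed formula.
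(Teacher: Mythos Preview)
Your proposal is correct and is precisely the approach the paper takes: the paper does not give a proof at all, but simply remarks that Marar's original argument in \cite{Marar1991} is ``essentially combinatorial and one can prove the version we need with almost no modifications.'' You have sketched that combinatorial argument in more detail than the paper itself provides, and your observation that the bookkeeping is local and independent of the codimension $p-n$ is exactly the point the authors are relying on.
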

Now, by \cref{MararFormula} but stated for a stable perturbation of an $\eqA$-finite corank $1$ map germ $f:(X,S)\rightarrow(\CC^{n+1},0)$, with $\dim X=n$, we have
\begin{equation}\label{eq: marar f x}
\begin{aligned}
	1+(-1)^n&\mu_I(X,f)=\texttt{\#}S+(-1)^n\mu(X)\\
	&+\sum_{k\geq 2} \sum_{\gamma(k) }a_{\gamma(k)}\left(\beta_0^{\gamma(k)}+(-1)^{\dim D^k\left(f,\gamma(k)\right)} \mu\left(D^k\medpar{f,\gamma(k)}\right)\right),
\end{aligned}\end{equation}
where $\texttt{\#}S$ is the number of points in $S$ and $\beta_0^{\gamma(k)}$ the zero Betti number of $D^k\medpar{f,\gamma(k)}$. 
%

With \cref{LeGreuel} in mind, we take a generic linear projection $p:\CC^{n+1}\rightarrow\CC$, with kernel $H\cong\CC^n$, which defines a transverse slice $g\coloneqq\left.f\right|:\big(X\cap f^{-1}(H),S\big) \rightarrow \CCzero{n}$. 

Observe that $X\cap f^{-1}(H)$, which we will call $\tilde{X}$ to simplify notation, is still an \textsc{icis}. Following the steps of \cite{Nuno-Ballesteros2019} from here, notice that, if $\dim D^k\medpar{f,\gamma(k)}>0$, then $\dim D^k\medpar{f,\gamma(k)}-1=\dim D^k\medpar{g,\gamma(k)}$ and, if $\dim D^k\medpar{f,\gamma(k)}=0$, then the fiber of $D^k\medpar{g,\gamma(k)}$ (i.e., $D^k\big(g_s,\gamma(k)\big)$ for a stable perturbation $g_s$) is empty, by \cref{lem: dkgamma}. Therefore, if we apply the previous formula to $(\tilde{X},g)$, we get
\begin{equation}\label{eq: marar gtildex}
\begin{aligned}
	1+(-1&)^{n-1}\mu_I(\tilde{X},g)=\texttt{\#}S+(-1)^{n-1}\mu(\tilde{X})\\
	&+\,\,\sum_{ \mathclap{\substack{k\geq2, \,\gamma(k):\\ \dim D^k(f,\gamma(k)) >0}} }\ \ \ a_{\gamma(k)}\left(\beta_0^{\gamma(k)}+(-1)^{\dim D^k\left(f,\gamma(k)\right)-1} \mu\left(D^k\medpar{g,\gamma(k)}\right)\right),
\end{aligned}\end{equation}
If we subtract \cref{eq: marar gtildex} from \cref{eq: marar f x}, and use that $\dim D^k\medpar{f,\gamma(k)}=n+1-k-k+\sum_i \alpha_i$ (see \cref{lem: dkgamma}), we have
\begin{align*}
	\mu_I(X,f)+\mu_I&(g,\tilde{X})=\mu(X)+\mu(\tilde{X}) \\
	&+\,\,\sum_{ \mathclap{\substack{k\geq2, \,\gamma(k):\\ \dim D^k(f,\gamma(k)) =0}} }
	\,\,\,\,\,\,\,\,\frac{(-1)^{\sum \alpha_i+1+n}}{\prod_{i\geq1}i^{\alpha_i}\alpha_i!}\left(\beta_0^{\gamma(k)}+\mu\left(D^k\medpar{f,\gamma(k)}\right)\right)\\
	&+\,\,\sum_{ \mathclap{\substack{k\geq2, \,\gamma(k):\\ \dim D^k(f,\gamma(k)) >0}} }
	\,\,\,\,\,\,\,\,\frac{1}{\prod_{i\geq1}i^{\alpha_i}\alpha_i!}\left(\mu\left(D^k\medpar{f,\gamma(k)}\right)+\mu\left(D^k\medpar{g,\gamma(k)}\right)\right),
\end{align*}
where we have simplified the signs expanding $a_{\gamma(k)}$, and $\beta_0^{\gamma(k)}$ denotes the same as before.

Once we arrive here, we can keep simplifying signs: if $\dim D^k(f,\gamma(k))=\sum \alpha_i+1+n-2k=0$, then the first sign is positive. 

On the other hand, we can choose a generic projection and coordinates on source and target so that $p(y_1,\dots,y_{n+1})=y_1$. Moreover, 
\begin{equation*}
D^k\medpar{g,\gamma(k)}=D^k\medpar{f,\gamma(k)}\cap \tilde{p}^{-1}(0), 
\end{equation*}
where $\tilde{p}:X\times\CC^{k-1}\rightarrow \CC$ is the projection on the first coordinate for every $k$, seeing $D^k(f)$ as a subset of $X\times\CC^{k-1}$ (recall the comments below \cref{inmersion}), and it is generic as well (in general it would be a mapping induced by $p\circ f$).

By the comments above, the structure of \textsc{icis} given in \cref{DP} and the Lê-Greuel-type formula for \textsc{icis}; we have
$$ \mu\left(D^k\medpar{f,\gamma(k)}\right)+\mu\left(D^k\medpar{g,\gamma(k)}\right) = \texttt{\#}\Sigma\left(\left.\tilde{p}\right|_{D^k(f_s,\gamma(k))}\right)$$
and
$$ \mu(X)+\mu(\tilde{X})= \texttt{\#}\Sigma\left(\left.\tilde{p}\right|_{X_s}\right),$$
where $f_s$ and $X_s$ are the stable perturbations of $f$ and $X$.

Moreover, note that, if $\dim D^k(f,\gamma(k))=0$, then 
$$ \mu\left(D^k\medpar{f,\gamma(k)}\right)=m_0\left(D^k\medpar{f,\gamma(k)}\right)-\beta_0^{\gamma(k)}, $$
where $m_0\left(D^k\medpar{f,\gamma(k)}\right)$ is the multiplicity of $D^k\big(f,\gamma(k)\big)$. This can also be seen as the number of critical points of $\left.\tilde{p}\right|_{D^k(f_s,\gamma(k))}$.

In conclusion,
	\[\mu_I(X,f)+\mu_I(g,\tilde{X})= \sum_{k\geq 1}\sum_{\gamma(k)}\frac{\texttt{\#}\Sigma\left(\left.\tilde{p}\right|_{D^k(f_s,\gamma(k))}\right)}{\prod_{i\geq1}i^{\alpha_i}\alpha_i!}.
\]

This is exactly the same point Pallarés-Torres and the second author, Nuño-Ballesteros, reach in \cite[Theorem 3.2]{Nuno-Ballesteros2019}. The theorem below follows from there (see \cref{figuralegreuel}).

\begin{theorem}[see {\cite[Theorem 3.2]{Nuno-Ballesteros2019}}]\label{LeGreuelICIS}
For an $\eqA$-finite map germ $f:\left(X,S\right)\rightarrow\left(\CC^{n+1},0\right)$ of corank $1$ from an \textsc{icis} $X$ of dimension $\dim X =n\geq 2$, let $p:\CC^{n+1}\rightarrow\CC$ be a generic linear projection which defines a transverse slice $g:\scalerel*{(}{\strut}X\cap\left(p\circ f\right)^{-1}\left(0\right),S\scalerel*{)}{\strut}\rightarrow \left(\CC^n,0\right)$. Then,
$$ \mu_I\left(f,X\right)+\mu_I\left(g,X\cap \left(p\circ f\right)^{-1}\left(0\right)\right)= \textnormal{\texttt{\#}}\Sigma \left(\left.p\right|_{Z_s}\right),$$
where $\textnormal{\texttt{\#}}\Sigma\left(\left.p\right|_{Z_s}\right)$ is the number of critical points in the stratified sense of $p$ restricted to $Z_s\coloneqq \text{Im}\left(f_s\right)$, being $f_s$ a stable perturbation of $f$.
\end{theorem}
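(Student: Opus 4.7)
The plan is to reduce the statement to the corresponding smooth-source result \cite[Theorem 3.2]{Nuno-Ballesteros2019} by a two-step strategy that uses Marar's formula to translate image Milnor numbers into Euler characteristics of multiple point spaces, and then applies the classical Lê-Greuel formula termwise on each ICIS $D^k\medpar{f,\gamma(k)}$. First, I would apply Marar's formula in its multi-germ/ICIS version (\cref{MararFormula}, adapted via the structure in \cref{DP}) to both $(X,f)$ and its transverse slice $(\tilde X,g)$, where $\tilde X \coloneqq X\cap (p\circ f)^{-1}(0)$ is still an ICIS. This yields expressions for $1+(-1)^n\mu_I(X,f)$ and $1+(-1)^{n-1}\mu_I(\tilde X,g)$ as weighted sums, over partitions $\gamma(k)$, of $\chi\medpar{D^k(f_s,\gamma(k))}$ and $\chi\medpar{D^k(g_s,\gamma(k))}$ respectively.

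Next, the plan is to subtract these two identities, splitting the sum according to whether $\dim D^k\medpar{f,\gamma(k)}=0$ (in which case $D^k\medpar{g,\gamma(k)}$ is empty and we must substitute $\mu=m_0-\beta_0$) or $\dim D^k\medpar{f,\gamma(k)}>0$ (in which case $\dim D^k\medpar{g,\gamma(k)}=\dim D^k\medpar{f,\gamma(k)}-1$). The decisive geometric observation, made possible by the corank one hypothesis, is that after choosing coordinates so that $p(y_1,\ldots,y_{n+1})=y_1$ and viewing each $D^k(f)$ as a subset of $X\times\CC^{k-1}$, one has $D^k\medpar{g,\gamma(k)}=D^k\medpar{f,\gamma(k)}\cap\tilde p^{-1}(0)$ for the linear projection $\tilde p$ onto the first coordinate, which is generic. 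Applying the classical Lê-Greuel formula to each ICIS $D^k\medpar{f,\gamma(k)}$ then converts $\mu\medpar{D^k(f,\gamma(k))}+\mu\medpar{D^k(g,\gamma(k))}$ into $\texttt{\#}\Sigma\medpar{\tilde p|_{D^k(f_s,\gamma(k))}}$, and a separate application to $X$ itself converts $\mu(X)+\mu(\tilde X)$ into $\texttt{\#}\Sigma\medpar{\tilde p|_{X_s}}$.

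After these substitutions and a careful cancellation of signs (using $\dim D^k\medpar{f,\gamma(k)}=\sum\alpha_i+1+n-2k$ to fix the parity in the zero-dimensional summands), one arrives at
\[
\mu_I(X,f)+\mu_I(\tilde X,g)=\sum_{k\geq 1}\sum_{\gamma(k)}\frac{\texttt{\#}\Sigma\medpar{\tilde p|_{D^k(f_s,\gamma(k))}}}{\prod_{i\geq 1}i^{\alpha_i}\alpha_i!},
\]
which coincides with the expression reached in the smooth case in \cite[Theorem 3.2]{Nuno-Ballesteros2019}. The final identification of the right-hand side with $\texttt{\#}\Sigma(p|_{Z_s})$, summed over the stratification of $Z_s$ by stable types, is precisely the content of that reference, since the combinatorial factors $\prod_{i\geq 1}i^{\alpha_i}\alpha_i!$ are exactly the cardinalities of the fibres of $D^k(f_s,\gamma(k))\to Z_s$ along the stratum of type $\gamma(k)$. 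I expect the bookkeeping in the signs and in the zero-dimensional summands (where Milnor numbers must be replaced by $m_0-\beta_0$) to be the most delicate step, but it is entirely combinatorial and parallels the argument in the smooth case, so no genuinely new obstacle appears beyond verifying that the ICIS structure on $D^k\medpar{f,\gamma(k)}$ provided by \cref{DP} allows the classical Lê-Greuel formula to be applied.
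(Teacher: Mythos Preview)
Your proposal is correct and follows essentially the same route as the paper's proof: apply Marar's formula to both $(X,f)$ and the transverse slice $(\tilde X,g)$, subtract, split the sum according to whether $\dim D^k\medpar{f,\gamma(k)}$ is zero or positive, use the identification $D^k\medpar{g,\gamma(k)}=D^k\medpar{f,\gamma(k)}\cap\tilde p^{-1}(0)$, and then invoke the classical L\^e--Greuel formula termwise on each {\sc icis} (including $X$ itself) to reach the displayed sum, after which the identification with $\texttt{\#}\Sigma(p|_{Z_s})$ is taken directly from \cite[Theorem 3.2]{Nuno-Ballesteros2019}. Your anticipation of the sign bookkeeping via $\dim D^k\medpar{f,\gamma(k)}=\sum_i\alpha_i+1+n-2k$ and of the zero-dimensional substitution $\mu=m_0-\beta_0$ matches the paper exactly.
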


We complete \cref{LeGreuelICIS} with the case of a one-dimensional \textsc{icis} $X$.

\begin{proposition}[see {\cite[Theorem 3.1]{Nuno-Ballesteros2019}}]
Let $f:(X,S)\to(\CC^2,0)$ be an injective map germ from an {\sc icis} $X$ of dimension one. Consider a generic linear projection $p:\CC^2\to\CC$, then
$$\mu_I(X,f)+m_0(f)-1=\textnormal{\texttt{\#}}\Sigma \left(\left.p\right|_{Z_s}\right),$$
where $\textnormal{\texttt{\#}}\Sigma\left(\left.p\right|_{Z_s}\right)$ is the number of critical points in the stratified sense of $p$ restricted to $Z_s\coloneqq \text{Im}\left(f_s\right)$, being $f_s$ a stable perturbation of $f$, and $m_0(f)=\dim_\CC \OO_{X,S}/f^*\mathfrak m_2$ the multiplicity of $f$.
\end{proposition}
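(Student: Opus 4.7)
The plan is to bypass Marar's formula, which requires $\dim X\ge 2$ and so does not drive the proof of \cref{LeGreuelICIS} in this one-dimensional case, and to run a direct Euler-characteristic plus Riemann--Hurwitz argument on the normalisation. Since $f$ is injective and of corank at most one into $\CC^2$, a stable perturbation $f_s\colon X_s\to\CC^2$ will be an immersion whose only instabilities are transverse double points; denote their number by $\delta$ and write $Z_s\coloneqq f_s(X_s)$. Then $f_s\colon X_s\to Z_s$ is the normalisation map, $2$-to-$1$ over the $\delta$ nodes and bijective elsewhere.

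First I would compute $\mu_I(X,f)$ via Euler characteristics. The Milnor fibre $X_s$ of the one-dimensional {\sc icis} $(X,S)$ has $\chi(X_s)=1-\mu(X)$, and the $2$-to-$1$ identification at the nodes gives $\chi(Z_s)=\chi(X_s)-\delta$. Because $Z_s$ is a connected one-dimensional CW-complex it is homotopy equivalent to a wedge of circles; combined with the definition of the image Milnor number this yields
\[
\mu_I(X,f)=1-\chi(Z_s)=\mu(X)+\delta.
\]

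Next I would decompose the stratified critical set $\Sigma(p|_{Z_s})$ according to the stable-type stratification of $Z_s$: each of the $\delta$ nodes is a $0$-dimensional stratum and hence automatically a critical point, contributing $\delta$ in total. On the smooth $1$-dimensional stratum, the critical points of $p$ are the vertical tangencies of $Z_s$. For a generic choice of $p$, no node of $Z_s$ has vertical tangency, and these smooth-stratum critical points correspond bijectively to the critical points of $g_s\coloneqq p\circ f_s\colon X_s\to\CC$.

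Finally I would apply the Riemann--Hurwitz formula to the proper holomorphic map $g_s\colon X_s\to D_\eta$: for a generic $p$, its degree is $m_0(p\circ f)=m_0(f)$ and its ramification is simple, hence $\chi(X_s)=m_0(f)-\#\Sigma(g_s)$. Combining everything,
\[
\#\Sigma(p|_{Z_s})=\delta+\#\Sigma(g_s)=\delta+m_0(f)+\mu(X)-1=\mu_I(X,f)+m_0(f)-1,
\]
which is the claim. The main subtlety will be securing the two genericity statements (that $m_0(p\circ f)=m_0(f)$ and that $\ker p$ is transverse to every local branch of $Z_s$ at each node), together with checking carefully in this low-dimensional setting that $Z_s$ has the claimed wedge-of-circles homotopy type, so that the identification $\chi(Z_s)=1-\mu_I(X,f)$ is legitimate; both points are standard but deserve explicit justification.
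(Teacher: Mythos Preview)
Your argument is correct and follows the same overall decomposition as the paper: split $\Sigma(p|_{Z_s})$ into the $\delta$ nodes plus the critical points of $p\circ f_s$ on the smooth stratum, and then identify the latter with $\mu(X)+m_0(f)-1$. The difference lies only in the tools used for the two sub-computations. For the smooth-stratum count, the paper invokes the classical L\^e--Greuel formula for the one-dimensional {\sc icis} $X$ and its slice $X\cap(p\circ f)^{-1}(0)$, whereas you obtain the same identity from Riemann--Hurwitz applied to $g_s\colon X_s\to D_\eta$; these are equivalent in dimension one. For the identity $\mu_I(X,f)=\mu(X)+\delta$, the paper appeals to \cref{Houston formula} (so $\mu_I=\mu_1^{\Alt}+\mu_2^{\Alt}=\mu(X)+\delta$), while you read it off directly from $\chi(Z_s)=\chi(X_s)-\delta$ together with the wedge-of-circles description of $Z_s$. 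Your route is more self-contained and avoids the {\sc icss} machinery, at the cost of reproving a special case of \cref{Houston formula} by hand; the paper's route illustrates how that machinery specialises. Note incidentally that your use of $\chi(X_s)=1-\mu(X)$ tacitly uses $|S|=1$, which is indeed forced by the injectivity hypothesis.
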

%
%
\begin{proof}
We have two strata in the image $Z_s$ of the stable perturbation $(X_s,f_s)$: the 0-dimensional stratum $Z_s^0$ given by the transverse double points and the 1-dimensional stratum $Z_s^1=f_s(X_s)\setminus Z_s^0$. 

Obviously, $\textnormal{\texttt{\#}}\Sigma \medpar{\left.p\right|_{Z_s^0}}$ is equal to $\textnormal{\texttt{\#}}Z_s^0$, which is equal to $\mu_2^\Alt(X,f)$. Since $f_s$ is a local diffeomorphism on $Z_s^1$, $\textnormal{\texttt{\#}}\Sigma \medpar{\left.p\right|_{Z_s^1}}$ is equal to the number of critical points of $p\circ f_s$ on $X_s$ (here the points of $Z_s^0$ can be excluded by genericity of $p$). By the usual Lê-Greuel formula for $X$ and $X\cap (p\circ f)^{-1}(0)$, we have
\[
\textnormal{\texttt{\#}}\Sigma\medpar{p\circ f_s}=\mu(X)+\deg(p\circ f)-1.
\]
But, again, the genericity of $p$ implies that $\deg(p\circ f)=m_0(f)$. Hence,
\begin{align*}
\textnormal{\texttt{\#}}\Sigma \medpar{\left.p\right|_{Z_s^0}}
+\textnormal{\texttt{\#}}\Sigma \medpar{\left.p\right|_{Z_s^1}}&=
\mu_2^\Alt(X,f)+\mu(X)+m_0(f)-1\\&=\mu_I(X,f)+m_0(f)-1,
\end{align*}
by \cref{Houston formula}.
\qed\newline
\end{proof}

\section{The double point Milnor number}\label{Df}

Finding conditions for a 1-parameter family to be Whitney equisingular requires working on the source and in the target separately. In the case of the source, we need to assure some structure and, as the reader could guess, the double point set is the best candidate. Furthermore, this set is the projection of an \textsc{icis}, the double point space, if the map is nice enough (see \cref{fig:DICIS}). We have some invariants in this sense.
%

\begin{definition}\label{mud}
\textit{The double point set of} $f:(\CC^n,S)\to(\CC^{n+1},0)$, of finite singularity type, is the projection on the first coordinate of $D^2(f)$, and we denote it by $D(f)$. Furthermore, if $f$ is $\eqA$-finite, we will define the \textit{double point Milnor number} as
$$\mu_D(f)\coloneqq\beta_{n-1}\medpar{D(f_t)},$$
where $f_t$ is a stable perturbation of $f$.
\end{definition}

\begin{remark}
The double point Milnor number was denoted as $\mu_{\Sigma_2}$ in \cite{Houston2001} by Houston.
\end{remark}

Note that we have to define $\mu_D(f)$ through a stable perturbation of $f$ because $D(f)$ is a hypersurface with possibly non-isolated singularities, hence may not have finite Milnor number in the ordinary sense of a hypersurface. However, we can still use \cref{siersma} to prove that $D(f_t)$ in \cref{mud} has the homotopy type of a wedge of spheres of middle dimension, as a small deformation of $D(f)$ is topologically trivial in a Milnor sphere (see \cref{deftopsiersma}).

The main reason to use this invariant is that $\mu_D(f)$ is the image Milnor number of certain $(X,f)$ if $f$ has corank one, in that case $\mu_D(f)$ coincides with $\mu_I\medpar{D^2(f),\pi}$. Hence, we can use all the machinery we developed above if $f$ has corank one. Firstly, $\mu_D(f)$ is well defined by \cref{muICIS}. Secondly, there are triple points of $f_t$ that also correspond to double points of $\pi$ and give rise to more homology (as \cref{fig:DICIS} represents, there we have depicted a vague idea of the generators of the homology because some higher-dimensional properties cannot be made visible). Finally, the invariant $\mu_D(f)$ is also conservative by \cref{conservationICIS}:

\begin{corollary}
Let $f:\CCS{n}\rightarrow\CCzero{n+1}$ be finitely $\eqA$-determined, of corank $1$, and $f_u$ a one-parameter perturbation of $f=f_0$. Take a representative of the unfolding such that its codomain, $B_\epsilon$, is a Milnor ball.  Then,
$$ \mu_D(f)=\beta_{n-1}\medpar{D(f_u)} +\sum_{y\in B_\epsilon}\mu_D(f_u,y).$$
\end{corollary}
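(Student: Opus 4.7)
The plan is to reduce this conservation statement to the already-established conservation theorem for the image Milnor number on an \textsc{icis} (\cref{conservationICIS}), applied to the pair $\medpar{D^2(f),\pi}$. The key point is that, since $f$ has corank one and is $\eqA$-finite, \cref{DICIS} guarantees that $D^2(f)\subset \CC^n\times\CC^n$ is an $(n-1)$-dimensional \textsc{icis} and $\pi\colon D^2(f)\to(\CC^n,S')$ is itself an $\eqA$-finite map germ of corank one between the appropriate dimensions. Moreover, by the definition of $\mu_D$ and the observation in \cref{Df} that $\mu_D(f)=\mu_I\medpar{D^2(f),\pi}$ in the corank one setting, the statement we want to prove is literally a $\mu_I$-conservation statement for $(D^2(f),\pi)$.

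First, I would set up the induced unfolding: the one-parameter unfolding $f_u$ of $f$ produces a one-parameter unfolding of the pair $(D^2(f),\pi)$, namely $\medpar{D^2(f_u),\pi_u}$, where $\pi_u$ is again the projection onto the first $n$ coordinates. Second, I would apply \cref{conservationICIS} to this unfolding, choosing a Milnor ball $B_\delta\subset \CC^n$ for $\pi$ small enough so that, for $u_0$ small, every instability of $\pi_{u_0}$ in $D^2(f_{u_0})\cap (B_\delta\times B_\delta)$ projects by $f_{u_0}$ into $B_\epsilon$. This yields
\[
\mu_I\medpar{D^2(f),\pi}=\beta_{n-1}\medpar{\pi_{u_0}(D^2(f_{u_0}))}+\sum_{z\in B_\delta}\mu_I\medpar{D^2(f_{u_0}),\pi_{u_0};z}.
\]
Since $\pi_{u_0}(D^2(f_{u_0}))=D(f_{u_0})$ by definition of the double point set, the Betti number term is exactly $\beta_{n-1}\medpar{D(f_{u_0})}$.

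Third, I would regroup the local sum by the $f_{u_0}$-image of the instability points. Every instability $z\in B_\delta$ of $\pi_{u_0}$ lies over a value $y=f_{u_0}(z)\in B_\epsilon$ at which $f_{u_0}$ has an unstable multi-germ, and conversely each unstable multi-germ of $f_{u_0}$ at $f_{u_0}^{-1}(y)$ produces exactly the set of $z\in f_{u_0}^{-1}(y)$ as the collection of instability points of $\pi_{u_0}$ that share the same target. Using the corank-one identity $\mu_D(\cdot,y)=\mu_I\medpar{D^2(\cdot),\pi;f_{u_0}^{-1}(y)}$ applied to the multi-germ of $f_{u_0}$ at $y$, the aggregated contribution from the $z$'s with $f_{u_0}(z)=y$ is precisely $\mu_D(f_{u_0},y)$. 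Summing over $y\in B_\epsilon$ gives the desired formula.

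The main obstacle is the bookkeeping in the third step: carefully matching the Milnor radii $\epsilon$ (for $f$ in the target) and $\delta$ (for $\pi$ in the source) so that the instability points of $\pi_{u_0}$ captured by the \textsc{icis} conservation theorem correspond bijectively, after grouping by $f_{u_0}$-image, with the instabilities of $f_{u_0}$ in $B_\epsilon$. Granted this matching and the corank-one identification between $\mu_D$ of a multi-germ and $\mu_I$ of its associated $(D^2,\pi)$ pair, the result is a direct consequence of \cref{conservationICIS}.
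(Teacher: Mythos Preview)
Your proposal is correct and follows exactly the approach the paper intends: the corollary is stated immediately after the identification $\mu_D(f)=\mu_I\medpar{D^2(f),\pi}$ with no proof beyond the phrase ``by \cref{conservationICIS}'', and your argument simply unpacks that reduction. The regrouping of the local contributions of $\pi_{u_0}$ over $z\in f_{u_0}^{-1}(y)$ into $\mu_D(f_{u_0},y)$ is the one step the paper leaves implicit, and your justification via the corank-one identity $\mu_D(\,\cdot\,,y)=\sum_{z\in f_{u_0}^{-1}(y)}\mu_I\medpar{D^2(f_{u_0}),\pi_{u_0};z}$ is the right way to make it explicit.
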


\begin{remark}
Observe that \cref{mud} can be generalized for every pair of dimensions $(n,p)$ as long as $n<p$ and $f$ has corank one. Despite the fact that $D(f)$ could not be a hypersurface, $D^2(f)$ is an \textsc{icis} in this case (see \cref{DICIS}) and we can define $\mu_D(f)$ by means of \cref{homotopydis} (one can also use the Euler-Poincaré characteristic of the image of a stable perturbation, see for example \cite{Nuno-Ballesteros2018}).
\end{remark}

\begin{figure}[htbp]
	\centering
		\includegraphics[width=0.80\textwidth]{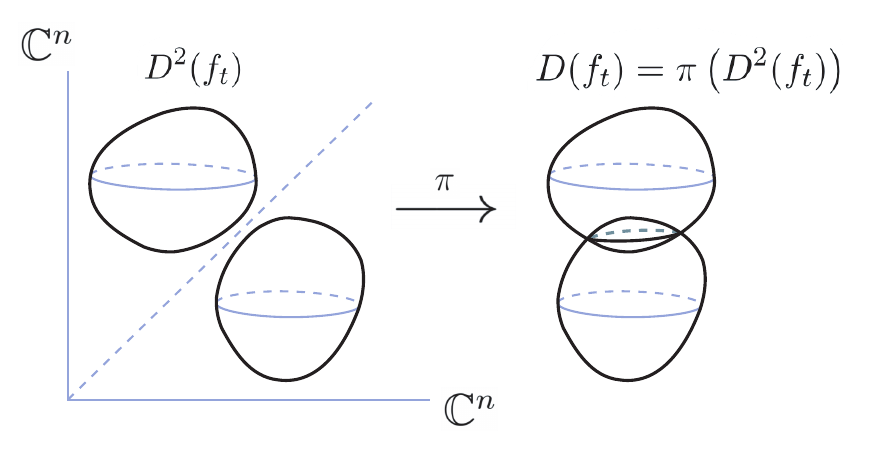}
	\caption{Representation of how the homology of the double point set of a stable perturbation works.}
	\label{fig:DICIS}
\end{figure}


Once more, we focus on the multiple point spaces but, in this case, we deal with $\medpar{D^2(f),\pi}$ and $D^k(\pi)$, where $f$ has corank one and it is $\eqA$-finite. Using the \textit{principle of iteration} (cf. \cite[4.1]{Kleiman1981}), the multiple point spaces of a perturbation of $f$ are isomorphic to the multiple point spaces of a perturbation of $\pi$ with a shift in the multiplicity, and the same is true for unfoldings $F$ and $\Pi$ (of $f$ and $\pi$, respectively). More precisely, $D^k(\pi_t)\cong D^{k+1}(f_t)$ and $D^k(\Pi)\cong D^{k+1}(F)$, where the first isomorphism is given by
\begin{equation}\label{eq: iso principio it}
\begin{aligned}
\phi\colon D^k(\pi_t) &\longrightarrow D^{k+1}(f_t)\\
		\big((x,x_1),\dots,(x,x_k)\big)&\longmapsto(x,x_1,\dots,x_k),
\end{aligned}
\end{equation}
and the second one is analogous. This inspires us to compare $\mu_k^\Alt\scalerel*{(}{\strut}D^2(f),\pi\scalerel*{)}{\strut}$ and $\mu_{k+1}^\Alt(f)$, which determine $\mu_D(f)$ and $\mu_I(f)$, respectively. The relation is straightforward considering that
\begin{align*}
\mu_k^\Alt\medpar{D^2(f),\pi}&=\textnormal{rank }  H^{\Alt_k}_{n-1-k+2}\medpar{D^k(\Pi),D^k(\pi_t)}\\
&=\textnormal{rank } H^{\Alt_k}_{n-1-k+2}\medpar{D^{k+1}(F),D^{k+1}(f_t)}
\end{align*}
and
\begin{equation*}
\mu_{k+1}^\Alt(f)=\textnormal{rank }  H^{\Alt_{k+1}}_{n-1-k+2}\medpar{D^{k+1}(F),D^{k+1}(f_t)}.
\end{equation*}
More precisely, the difference between $\mu^\Alt_k\medpar{D^2(f),\pi}$ and $\mu_{k+1}^{\Alt}(f)$ is the group of permutations that acts. 

To ease the notation, we write $H$ instead of  $H_{n-1-k+2}\big(D^{k+1}(F),D^{k+1}(f_t)\big)$ and $k$ will be clear from the context. Consequently, we want to compare the alternating actions of $\Sigma_{k+1}$ and $\Sigma_k<\Sigma_{k+1}$ on $H$, where $\Sigma_k$  acts as a subgroup fixing the first entry (by construction of the isomorphism of \cref{eq: iso principio it}). We will use representation theory to do this. For this reason, we will see $H$ as a $\CC$-vector space. 

Recall that for each partition of $N$, say $\gamma(N)$, there is associated an irreducible representation of $\Sigma_N$ (see \cite[Proposition 1.10.1]{Sagan2001}). We will call this representation the $\gamma(N)$\textit{-representation}. The representation that acts by its sign is associated to the partition $(1,\dots,1)$, and we call it the \textit{alternating representation}. Moreover, from the branching rules (see \cite[Theorem 2.8.3]{Sagan2001}), we know that the alternating representation of $\Sigma_N$ appears as a restriction of $\Sigma_{N+1}$ from both the alternating representation and the $(2,1,\dots,1)$-representation. Therefore, knowing the character of the last one will be useful. Unfortunately, we could not find it in the literature.

\begin{lemma}\label{caracter}
The character of the irreducible representation associated to the partition $(2,1,\dots,1)$ is
$$ sign(\sigma)\left(\textnormal{fix}\left(\sigma\right)-1\right),$$
where $\textnormal{fix}(\sigma)$ is the number of entries fixed by the permutation $\sigma$.
\end{lemma}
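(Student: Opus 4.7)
The plan is to recognise the partition $(2,1,\dots,1)$ of $N$ as the conjugate (transpose) of the partition $(N-1,1)$. The Young diagram of $(N-1,1)$ consists of one row of length $N-1$ and one row of length $1$; its transpose has a first column of length $2$ and $N-2$ further columns of length $1$, which is exactly $(2,1,\dots,1)$.

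The $(N-1,1)$-representation of $\Sigma_N$ is the classical \emph{standard representation}, realised on the hyperplane $\{(x_1,\dots,x_N)\in\CC^N:\sum_i x_i=0\}$ with the natural action of $\Sigma_N$ by permutation of coordinates. The permutation representation on $\CC^N$ has character $\textnormal{fix}(\sigma)$ and decomposes as the trivial representation plus the standard one, so the character of the $(N-1,1)$-representation is $\chi_{(N-1,1)}(\sigma)=\textnormal{fix}(\sigma)-1$. This fact is elementary and can also be found in \cite[Example 1.6.4 and Section 2.3]{Sagan2001}.

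Now apply the standard fact (see, e.g., \cite[Theorem 2.4.7]{Sagan2001}) that tensoring an irreducible $\Sigma_N$-representation $V_\lambda$ with the sign representation yields the irreducible representation $V_{\lambda'}$ indexed by the conjugate partition $\lambda'$. Taking $\lambda=(N-1,1)$, so that $\lambda'=(2,1,\dots,1)$, we obtain
\[
\chi_{(2,1,\dots,1)}(\sigma)=\textnormal{sign}(\sigma)\cdot\chi_{(N-1,1)}(\sigma)=\textnormal{sign}(\sigma)\bigl(\textnormal{fix}(\sigma)-1\bigr).
\]
The proof is essentially a reduction to two standard facts in the representation theory of the symmetric group, so there is no substantial obstacle; the only thing to keep track of is that the conjugation of partitions indeed interchanges $(N-1,1)$ with $(2,1,\dots,1)$, which is a matter of drawing the two Young diagrams.
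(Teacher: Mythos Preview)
Your proof is correct. Both your argument and the paper's ultimately identify the $(2,1,\dots,1)$-representation as the tensor product of the standard representation with the sign representation, whence the character $\operatorname{sign}(\sigma)\bigl(\textnormal{fix}(\sigma)-1\bigr)$. The paper, however, reaches this identification less directly: it treats $N=3$ separately, and for $N>3$ appeals to \cite[Exercise~4.14]{Fulton1991} to reduce to two candidates (the standard representation or its tensor with the sign), then rules out the first via a further inspection (\cite[Exercise~4.6]{Fulton1991}). Your route via the general fact $V_{\lambda}\otimes\operatorname{sign}\cong V_{\lambda'}$, together with the observation that the conjugate of $(N-1,1)$ is precisely $(2,1,\dots,1)$, bypasses both the case distinction and the elimination step and works uniformly for all $N$. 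The trade-off is only in which reference one is willing to cite: the conjugate-partition theorem is slightly heavier machinery than the Fulton--Harris exercises, but it gives a one-line proof.
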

\begin{proof}
If $N=3$, the result is trivial. Assume that $N>3$; then, by \cite[Exercise 4.14]{Fulton1991}, we know that the representation associated to this partition is either the standard or the tensor product of the standard and the alternating representations. By a careful inspection (but see also \cite[Exercise 4.6]{Fulton1991}), we know that it is not the standard representation, therefore its character is the product of the standard and the alternating representations.

\qed\newline

\end{proof}

For any representation $V$ of a finite group $G$ there is a decomposition into irreducible representations where there could be multiple copies of the same irreducible representation, see for example \cite[Proposition 1.8]{Fulton1991}. For any partition $\gamma(N)$ of $N$, the $\gamma(N)$\textit{-isotype}, also written as $V^{\gamma(N)}$, is the sum of those copies of the $\gamma(N)$-representation. In the particular case of the $(1,\dots,1)$-isotype we will use the name \textit{alternating isotype}, as well. Hence, a useful object is the projection onto an isotype.

The projection onto the $(2,1,\dots,1)$-isotype can be made explicit considering we know the character and the dimension of the irreducible representation associated to this partition (see \cite[Section 2.4]{Fulton1991}):
$$ P_{k+1}\coloneqq \frac{k}{(k+1)!}\sum_{\sigma\in \Sigma_{k+1}}sign(\sigma)\left(\textnormal{fix}(\sigma)-1\right)\sigma.$$

\begin{remark}
Observe that one can define the projection $P_{k+1}$ with domain any space where $\Sigma_{k+1}$ acts, here we will define it on $H$ if nothing is said.
\end{remark}

\begin{theorem}\label{comparativak}
Let $f:\CCS{n}\rightarrow\CCzero{n+1}$ be $\eqA$-finite of corank $1$. Then,
\begin{equation*}
\mu_{k+1}^\Alt(f)\leq\mu_k^\Alt\medpar{D^2(f),\pi}
\end{equation*}
for $k=1,\dots,n$. Furthermore,
\begin{enumerate}[\itshape(i)]
	\item for $k=2,\dots,n$, $ \mu_{k+1}^\Alt(f)=\mu_k^\Alt\medpar{D^2(f),\pi} $ if, and only if, $P_{k+1}\equiv0$ (or, equivalently, the $(2,1,\dots,1)$-isotype is zero), and \label{cki}
	\item for $k=1$, $\mu_2^\Alt(f)=\mu\left(D^2(f)\right)$ if, and only if, the space $H_{n-1}\medpar{D^2(f_t)}$ coincides with its alternating isotype, for $f_t$ a stable perturbation of $f$.\label{ckii}
\end{enumerate}
\end{theorem}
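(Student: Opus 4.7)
The plan is to realise both invariants as dimensions of isotypes of a single $\Sigma_{k+1}$-module and then apply Young's branching rule. By the iteration principle, the map $\phi$ in \eqref{eq: iso principio it} and its analogue for the versal unfoldings give $D^k(\pi_t)\cong D^{k+1}(f_t)$ and $D^k(\Pi)\cong D^{k+1}(F)$. Writing
\[
H\coloneqq H_{n-1-k+2}\medpar{D^{k+1}(F),D^{k+1}(f_t);\CC},
\]
I will verify that, under these isomorphisms, the $\Sigma_k$-action on $D^k(\Pi)$ (permuting the $k$ factors $(x,x_i)$) corresponds to the action of the subgroup $\Sigma_k<\Sigma_{k+1}$ fixing the first coordinate of a $(k+1)$-tuple in $D^{k+1}(F)$. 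This identifies
\[
\mu_{k+1}^\Alt(f)=\dim_\CC H^{\Alt_{k+1}},\qquad \mu_k^\Alt\medpar{D^2(f),\pi}=\dim_\CC H^{\Alt_k}.
\]
The inequality is then immediate, since the sign character of $\Sigma_{k+1}$ restricts to the sign character of $\Sigma_k$, so that $H^{\Alt_{k+1}}\subseteq H^{\Alt_k}$.

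For statement \textit{(i)}, I will decompose $H$ into $\Sigma_{k+1}$-isotypes, $H=\bigoplus_{\gamma(k+1)}V_{\gamma(k+1)}^{m_{\gamma(k+1)}}$, and invoke Young's branching rule: the irreducibles of $\Sigma_k$ appearing in $\textnormal{Res}^{\Sigma_{k+1}}_{\Sigma_k}V_{\gamma(k+1)}$ are precisely the $V_{\gamma(k)}$ whose Young diagrams arise from $\gamma(k+1)$ by removing one corner box. Therefore $V_{(1^k)}$ appears in the restriction exactly when $\gamma(k+1)$ is obtained from the single column $(1^k)$ by adding one box, leaving only $\gamma(k+1)=(1^{k+1})$ (extending the column) and $\gamma(k+1)=(2,1^{k-1})$ (starting a second column); in each case the multiplicity is $1$. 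Hence
\[
\dim_\CC H^{\Alt_k}=m_{(1^{k+1})}+m_{(2,1^{k-1})}=\dim_\CC H^{\Alt_{k+1}}+m_{(2,1^{k-1})},
\]
and equality holds if and only if $m_{(2,1^{k-1})}=0$, equivalently the $(2,1^{k-1})$-isotype of $H$ vanishes, equivalently $P_{k+1}\equiv 0$.

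For statement \textit{(ii)}, when $k=1$ the group $\Sigma_1$ is trivial, so $H^{\Alt_1}=H$ and $\mu_1^\Alt\medpar{D^2(f),\pi}=\dim_\CC H=\mu\medpar{D^2(f)}$. By \cref{DICIS}, the total space $D^2(F)$ is a smooth germ, hence contractible, so the long exact sequence of the pair $\medpar{D^2(F),D^2(f_t)}$ provides a $\Sigma_2$-equivariant isomorphism $H\cong \tilde H_{n-1}\medpar{D^2(f_t);\CC}$. Under this identification $\mu_2^\Alt(f)=\dim_\CC \tilde H_{n-1}\medpar{D^2(f_t)}^{\Alt_2}$, and the equality with $\mu\medpar{D^2(f)}=\dim_\CC \tilde H_{n-1}\medpar{D^2(f_t)}$ is equivalent to $\tilde H_{n-1}\medpar{D^2(f_t)}$ coinciding with its alternating isotype.

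The main obstacle is the careful bookkeeping of the iteration principle: matching the $\Sigma_k$-action on $D^k(\Pi)$ with the restriction of the natural $\Sigma_{k+1}$-action on $D^{k+1}(F)$ fixing the first entry. Once this identification is established, the rest is a clean application of the branching rule, together with, for the case $k=1$, the long exact sequence of a pair and the contractibility of the total space of the versal unfolding.
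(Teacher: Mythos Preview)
Your proposal is correct and follows essentially the same approach as the paper's own proof: both identify $\mu_{k+1}^\Alt(f)$ and $\mu_k^\Alt\medpar{D^2(f),\pi}$ as the $\Sigma_{k+1}$- and $\Sigma_k$-alternating isotypes of the same module $H$ via the iteration principle, and then invoke the branching rule to see that the only $\Sigma_{k+1}$-irreducibles contributing to $H^{\Alt_k}$ upon restriction are $(1^{k+1})$ and $(2,1^{k-1})$. Your treatment of case \textit{(ii)} is slightly more explicit than the paper's (you spell out the long exact sequence of the pair and the contractibility of $D^2(F)$, whereas the paper simply notes that $\Sigma_2$ has only the trivial and sign representations), but the substance is identical.
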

\begin{proof}
For $k=2,\dots,n$, from the branching rules, we know that the alternating isotype of the $\Sigma_{k}$ representation on $H$ comes exactly from the alternating isotype and the $(2,1,\dots,1)$-isotype of the representation of $\Sigma_{k+1}$. 

Moreover, the former isotype contributes with the same dimension it has but the latter makes a contribution of one dimension for each $k$-dimensional copy it has. This comes from the fact that, in this isotype, every copy of the $(2,1,\dots,1)$-representation has dimension $k$ and each one splits into an alternating representation of dimension $1$ and an irreducible $(2,1,\dots,1)$-representation of dimension $k-1$ when we restrict it to the subgroup $\Sigma_{k}$. 

As the only difference between $\mu_{k+1}^\Alt(f)$ and $\mu_k^\Alt\medpar{D^2(f),\pi}$ is the different groups acting, $\Sigma_{k+1}$ and $\Sigma_k$ as a subgroup, the result follows for these cases. 

On the other hand, $P_{k+1}\equiv0$ if, and only if, there is no $(2,1,\dots,1)$-isotype, so \cref{cki} follows.

Finally, \cref{ckii} is trivial, as the only two possible representations of $\Sigma_2$ are the trivial and the alternating one.
\qed\newline
\end{proof}

\begin{remark}
It is possible to determine the difference between $\mu_{k+1}^\Alt(f)$ and $\mu_k^\Alt\medpar{D^2(f),\pi}$, for it depends on the number of repetitions of the $(2,1,\dots,1)$-representation. For example, one can compute it through the inner product between the character of the whole representation of $\Sigma_{k+1}$ and the $(2,1,\dots,1)$-representation (see \cite[Corollary 2.16]{Fulton1991}), which is nothing more than counting the number of generators fixed by $P_k$ in a convenient basis.
\end{remark}

Also, for $k=n$ and $k=n+1$, one is dealing with zero homology and the multiple point spaces are points, this ease the relation and we can say something more.

\begin{theorem}\label{comparativan}
With the hypotheses of \cref{comparativak}, for $k=n$,
$$ (n+1) \textnormal{ rank }H_0^{\Alt_{n+1}}\left(D^{n+1}(f_t)\right)=\textnormal{rank } H_0^{\Alt_{n}}\left(D^{n+1}(f_t)\right).$$

Also, for $k=n+1$,
$$\frac{d(f)s(f)^2}{s(f)-1}\mu_{n+2}^\Alt(f)= \mu_{n+1}^\Alt\medpar{D^2(f),\pi}.$$
\end{theorem}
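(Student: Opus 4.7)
Since $f$ has corank one and $\dim D^{n+1}(f)=0$, the space $D^{n+1}(f_t)$ is a finite set of ordered $(n+1)$-tuples, one such tuple for every ordering of the preimages of an $(n+1)$-fold point of $f_t$. At such a point the $n+1$ preimages are distinct, so $\Sigma_{n+1}$ acts freely, and letting $N$ denote the number of $(n+1)$-fold points one gets $H_0(D^{n+1}(f_t);\CC)\cong N\cdot\CC[\Sigma_{n+1}]$ as a $\Sigma_{n+1}$-module. The sign isotype of the regular representation of $\Sigma_{n+1}$ has dimension one, giving $\rank H_0^{\Alt_{n+1}}\bigl(D^{n+1}(f_t)\bigr)=N$. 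Restricting to the index-$(n+1)$ subgroup $\Sigma_n$ that fixes the first coordinate, each copy of $\CC[\Sigma_{n+1}]$ splits as $(n+1)$ copies of $\CC[\Sigma_n]$, each with a one-dimensional sign isotype; hence $\rank H_0^{\Alt_{n}}\bigl(D^{n+1}(f_t)\bigr)=(n+1)N$, and the first equality follows.

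\textbf{Plan for the second equality.} If $d(f)<n+1$, then both sides vanish by \cref{mukalticis}, since $n+2>d(f)+1$ and $n+1>d(\pi)+1$, where $d(\pi)=d(f)-1$ by the principle of iteration. Assume $d(f)=n+1$ and $s(f)>n+1$. From \cref{mukalticis}, $\mu_{n+2}^\Alt(f)=\binom{s(f)-1}{n+1}$. For $\mu_{n+1}^\Alt(D^2(f),\pi)$, I would mimic the last part of the proof of \cref{Houston formula}: compute $\rank E_2^{n+1,0}(\Pi,\pi)$ as (the absolute value of) the alternating Euler characteristic of the bottom row of the spectral sequence for $\pi$, arguing that this row is exact except at the single entry $r=d(\pi)+1$.

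By iteration $D^k(\Pi)\cong D^{k+1}(F)$, and because $D^{r+2}(f_t)=\varnothing$ for $r\geq n$, the tail entries reduce to $E_1^{r,0}(\Pi,\pi)=H_0^{\Alt_{r+1}}\bigl(D^{r+2}(F)\bigr)$. The germ $D^{r+2}(F)$ contracts onto the ordered $(r+2)$-tuples of distinct elements of $S$, and $\Sigma_{r+1}$ acts freely as the subgroup of $\Sigma_{r+2}$ fixing the first coordinate; its orbits are parametrised by a choice of first coordinate together with an unordered $(r+1)$-subset of the remaining $s(f)-1$ elements, and each orbit carries a regular representation of $\Sigma_{r+1}$. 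This gives
\[
\rank E_1^{r,0}(\Pi,\pi)=s(f)\binom{s(f)-1}{r+1}\qquad\text{for }r\geq n,
\]
while $E_1^{r,0}(\Pi,\pi)=0$ for smaller $r$ (since then the Milnor fibre $D^{r+2}(f_t)$ meets every branch of $D^{r+2}(F)$, so relative $H_0$ vanishes). Applying the identity $\sum_{\ell=0}^{k}(-1)^{\ell}\binom{N}{\ell}=(-1)^{k}\binom{N-1}{k}$ to the resulting alternating sum, with $N=s(f)-1$ and $k=n$, one obtains $\mu_{n+1}^\Alt(D^2(f),\pi)=s(f)\binom{s(f)-2}{n}$, and a direct binomial manipulation then rewrites this as the claimed multiple of $\mu_{n+2}^\Alt(f)=\binom{s(f)-1}{n+1}$.

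\textbf{Main obstacle.} The delicate step is the analogue for $(\Pi,\pi)$ of the bottom-row-exactness used for $f$ in the proof of \cref{Houston formula}: one must check that on the second page the only surviving entry of the bottom row of the spectral sequence for $\pi$ is $E_2^{d(\pi)+1,0}$. This requires contractibility of $\Pi(D^2(F))$ together with vanishing of the relative $H_0^{\Alt_{r+1}}$ for $r<d(\pi)$, which should follow by the same kind of argument as in \cref{homotopydis}, after observing that every branch of $D^2(F)$ is met by the Milnor fibre $D^2(f_t)$.
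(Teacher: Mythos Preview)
Your approach is correct and essentially the same as the paper's: both parts exploit the regular-representation structure on the relevant $H_0$, and for the second equality you and the paper both reduce to the alternating sum along the bottom row of the ICSS for $\pi$ (your count $s(f)\binom{s(f)-1}{r+1}$ is exactly the paper's $(r+2)\binom{s(f)}{r+2}$, and the ``main obstacle'' you flag is precisely the step the paper also takes for granted). One warning about the very last step: your binomial manipulation will give
\[
\mu_{n+1}^{\Alt}\bigl(D^2(f),\pi\bigr)=s(f)\binom{s(f)-2}{n}=\frac{d(f)\,s(f)}{s(f)-1}\,\mu_{n+2}^{\Alt}(f),
\]
not the stated $\dfrac{d(f)\,s(f)^2}{s(f)-1}\,\mu_{n+2}^{\Alt}(f)$; the extra factor $s(f)$ in the theorem's displayed constant is a typo, as one sees already from the paper's own intermediate expression $\dfrac{d(f)(d(f)+1)}{s(f)-1}\binom{s(f)}{d(f)+1}$.
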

\begin{proof}
Now, we are dealing with points and the zero homology. In particular, we can identify the elements in the homology with the 0-chains. Hence, one can find a basis of $H_0^{\Alt_{n+1}}\left(D^{n+1}(f_t)\right)$ with elements given using the orbits of points $p$, for $p\in D^{n+1}(f_t)$:
$$ \sum_\sigma sign(\sigma)\sigma(p). $$

The action of $\Sigma_{n+1}$ on the homology of the orbit of $p$ is the regular representation, so it decomposes into the alternating representation we are considering, $n$ $(2,1,\dots,1)$-subrepresentations and more irreducible subrepresentations (see \cite[p. 17]{Fulton1991}). The contributions to the alternating isotype of the representation of $\Sigma_n$ come from: one alternating representation from each $(2,1,\dots,1)$-subrepresentation, each one of the alternating subrepresentation of $\Sigma_{n+1}$ will be preserved in the subgroup, and there are no more contributions from other isotypes. This happens for every orbit of points in $D^{n+1}(f_t)$, proving the first statement.



To prove the second part, recall that $\mu_{n+2}^\Alt(f)$ comes from the bottom row of the spectral sequence (see, for example, \cref{1pag}), and the argument is similar but, now, working with the multiple point space of the unfolding. Therefore, again, the alternating isotype of $\Sigma_{k}$ is $k+1$ times bigger than the alternating isotype of $\Sigma_{k+1}$. Hence, if originally $\mu_{n+2}^\Alt(f)$ was
$$ \left|\sum_{\ell=d(f)+1}^{s(f)}(-1)^l {s(f) \choose \ell}\right|={s(f)-1\choose d(f)}, $$
now, $\mu_{n+1}^\Alt\medpar{D^2(f),\pi}$ is
\begin{align*}
	\pushQED{\qed}\left|\sum_{\ell=d(f)+1}^{s(f)}(-1)^\ell \ell{s(f) \choose \ell}\right|&=\frac{d(f)\left(d(f)+1\right)}{s(f)-1}{s(f) \choose d(f)+1}\\
	&=\frac{d(f)s(f)^2}{s(f)-1}{s(f)-1 \choose d(f)}.\qedhere
\popQED
\end{align*}

\end{proof}

\begin{remark}
Although these inequalities and equalities are enough for our purposes, one can specify the relation of $\mu_{n+1}^\Alt(f)$ and $ \mu_{n}^\Alt\medpar{D^2(f),\pi}$ using the ideas of the second part of the proof of \cref{comparativan} and considering the exact sequence of the pair.

Also, one may ask what happens if the group acts by permutation of the elements of a base for some $k<n$ (this action could be not faithful). Regarding this, there are algorithms to compute the alternating part based in the same idea: looking for orbits and the relation between the actions. An upper bound is also possible with the same ideas.
\end{remark}

There are some interesting corollaries of \cref{comparativak,comparativan}. For example, it could happen that there is not enough space in the homology group to fit a $(2,1,\dots,1)$\mdash subrepresentation.

\begin{corollary}
With the notation of \cref{comparativak}, for $k=1,\dots,n$, if 
$$\rank \left(H_{n-1-k+2}\big(D^{k+1}(F),D^{k+1}(f_t)\big)\right)-\mu_{k+1}^\Alt(f)<k,$$ then
\begin{equation*}
\mu_{k+1}^\Alt(f)=\mu_k^\Alt\medpar{D^2(f),\pi}.
\end{equation*}
\end{corollary}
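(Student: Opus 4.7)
The plan is to apply the characterisations already given in \cref{comparativak}: for $k\geq2$ equality holds if and only if the $(2,1,\dots,1)$-isotype of the $\Sigma_{k+1}$-representation on $H:=H_{n-1-k+2}\big(D^{k+1}(F),D^{k+1}(f_t)\big)$ vanishes, and for $k=1$ equality holds if and only if the trivial (non-alternating) isotype of the $\Sigma_2$-action vanishes. The hypothesis on ranks is precisely a dimension count that forces these isotypes to be zero for purely representation-theoretic reasons, so the work reduces to identifying the minimal dimension that a non-trivial $(2,1,\dots,1)$-subrepresentation would occupy.

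First, I would recall from \cref{caracter} the character of the irreducible representation associated to the partition $(2,1,\dots,1)$ of $k+1$. Evaluating the character at the identity gives $\mathrm{sign}(\mathrm{id})\,(\mathrm{fix}(\mathrm{id})-1)=k$, so every copy of the $(2,1,\dots,1)$-representation inside $H$ contributes exactly $k$ linearly independent vectors. Since distinct isotypes of $H$ sit in a direct-sum decomposition of $H$ as a $\Sigma_{k+1}$-module, the $(2,1,\dots,1)$-isotype and the alternating isotype intersect trivially; hence the dimension of the $(2,1,\dots,1)$-isotype is bounded above by $\dim H-\mu^{\Alt}_{k+1}(f)$.

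Second, I would apply the inequality of the hypothesis: if $\dim H-\mu^{\Alt}_{k+1}(f)<k$, then there is not enough room in the complement of the alternating isotype to house even a single copy of the $(2,1,\dots,1)$-representation. Consequently the $(2,1,\dots,1)$-isotype is zero, and \cref{comparativak}\cref{cki} yields $\mu^{\Alt}_{k+1}(f)=\mu^{\Alt}_k\bigl(D^2(f),\pi\bigr)$.

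Finally, for the edge case $k=1$, the only irreducible $\Sigma_2$-representations are the trivial and the alternating ones, both of dimension one, so $\dim H-\mu^{\Alt}_{2}(f)$ equals the dimension of the trivial isotype. The hypothesis says this quantity is less than $1$, hence zero, which by \cref{comparativak}\cref{ckii} gives $\mu^{\Alt}_2(f)=\mu(D^2(f))=\mu^{\Alt}_1(D^2(f),\pi)$. The only genuinely delicate point is confirming that the dimension of the $(2,1,\dots,1)$-representation is indeed $k$ and that it remains an irreducible summand in every decomposition (so that the isotype argument is valid); both follow immediately from standard character theory combined with \cref{caracter}, so no substantive obstacle remains.
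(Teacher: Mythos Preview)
Your proof is correct and follows exactly the approach the paper sketches: the $(2,1,\dots,1)$-representation of $\Sigma_{k+1}$ has dimension $k$ (via the character at the identity from \cref{caracter}), so if the complement of the alternating isotype in $H$ has rank strictly less than $k$ there is no room for a single copy, forcing the $(2,1,\dots,1)$-isotype to vanish and \cref{comparativak} then gives the equality. Your treatment of the $k=1$ case is likewise correct once one identifies $H_n\bigl(D^2(F),D^2(f_t)\bigr)$ with $H_{n-1}\bigl(D^2(f_t)\bigr)$ $\Sigma_2$-equivariantly via the long exact sequence of the pair.
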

\begin{proof}
The proof is based on the fact that a $(2,1,\dots,1)$-representation of $\Sigma_k$ has dimension $k$ and the ideas of \cref{comparativak,comparativan}.
\qed\newline
\end{proof}

Another example is an inequality involving the full Milnor number on both contexts.

\begin{corollary}\label{ileqd}
For $f$ as in \cref{comparativak}, $\mu_I(f)\leq\mu_D(f)$. This holds with equality if, and only if, $H_{n-1}\medpar{D^2(f_t)}$ coincides with its alternating isotype and all the $P_i$ are zero for all $i$, for $P_i$ as in \cref{comparativak}.
\end{corollary}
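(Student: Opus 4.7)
My plan is to decompose both invariants using the Houston formula and then compare them level by level with the tools already developed in \cref{comparativak} and \cref{comparativan}.

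First, I would apply \cref{Houston formula} to $(\CC^n,f)$ to get $\mu_I(f) = \sum_k \mu_k^\Alt(f)$. Since the source $\CC^n$ is smooth, $\mu_1^\Alt(f) = \mu(\CC^n) = 0$, and reindexing yields
\[
\mu_I(f) = \sum_{k \ge 1} \mu_{k+1}^\Alt(f).
\]
On the double-point side, because $f$ has corank one, \cref{DICIS} ensures that $D^2(f)$ is an \textsc{icis} of dimension $n-1$ and that $\pi\colon D^2(f) \to \CC^n$ is a corank-one germ with isolated instability, so \cref{muICIS} applies. By definition $\mu_D(f) = \mu_I(D^2(f),\pi)$, and a second application of \cref{Houston formula} (to the pair $(D^2(f),\pi)$) gives
\[
\mu_D(f) = \sum_{k\ge 1} \mu_k^\Alt(D^2(f),\pi).
\]

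Next, I would compare the two series term by term. For $k=1$, the first summand on the right is $\mu_1^\Alt(D^2(f),\pi) = \mu(D^2(f))$, and \cref{comparativak}(\ref{ckii}) gives $\mu_2^\Alt(f) \le \mu(D^2(f))$, with equality precisely when $H_{n-1}(D^2(f_t))$ coincides with its alternating isotype. For $k = 2, \dots, n$, \cref{comparativak}(\ref{cki}) gives $\mu_{k+1}^\Alt(f) \le \mu_k^\Alt(D^2(f),\pi)$, with equality iff $P_{k+1} \equiv 0$. Finally, for $k = n+1$, \cref{comparativan} yields the identity
\[
\mu_{n+1}^\Alt(D^2(f),\pi) = \frac{d(f)\,s(f)^2}{s(f)-1}\,\mu_{n+2}^\Alt(f),
\]
and since the coefficient is at least $1$, this again gives $\mu_{n+2}^\Alt(f) \le \mu_{n+1}^\Alt(D^2(f),\pi)$. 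Summing over all $k$ immediately produces $\mu_I(f) \le \mu_D(f)$.

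For the equality case, equality in the sum forces equality at each level; the conditions listed in the statement are then exactly the term-by-term equality conditions read off from \cref{comparativak} and \cref{comparativan}. The main subtlety I anticipate is the bookkeeping at the endpoint $k=n+1$: the comparison there is governed by the combinatorial ratio from \cref{comparativan} rather than by a genuine projector $P_i$, so one must check that its collapse to $1$ is either implied by the vanishing of the projectors at lower levels or is interpreted as the ``last'' instance of the $P_i$-condition, i.e.\ that the $(2,1,\dots,1)$-isotype of the top alternating representation is trivial (equivalently, $\mu_{n+2}^\Alt(f)=0$). Apart from this index-matching point, the argument is a direct summation of the inequalities already established.
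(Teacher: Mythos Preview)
Your proposal is correct and follows exactly the approach the paper intends: the corollary is stated without proof precisely because it is the direct summation, via \cref{Houston formula}, of the term-by-term inequalities and equality conditions in \cref{comparativak} and \cref{comparativan}. Your identification of the endpoint bookkeeping at $k=n+1$ as the only subtlety is accurate; in the paper's phrasing ``all the $P_i$ are zero for all $i$'' is meant to absorb that last case as well, since the vanishing of the $(2,1,\dots,1)$-isotype at the top level is what forces the combinatorial ratio from \cref{comparativan} to collapse.
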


Thence, there are some nice characterizations as well, in particular weak  Mond's conjecture for $\mu_D(f)$.

\begin{corollary}[see {\cite[Theorem 3.9]{GimenezConejero2021}}]
For $f$ as in \cref{comparativak}, the following are equivalent:
\begin{enumerate}[\itshape(i)]
	\item $f$ is stable,
	\item $\mu_I(f)=0$, and
		\item $\mu_D(f)=0$.
\end{enumerate}
\end{corollary}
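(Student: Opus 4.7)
The strategy is to close a short cycle among (i), (ii), and (iii), piggy-backing on results already in the paper so that no new machinery is needed.

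First, I would recall that the equivalence (i) $\Leftrightarrow$ (ii) is exactly the weak form of Mond's conjecture in the corank one case, established in \cite[Theorem 3.9]{GimenezConejero2021}. So the only work is to incorporate (iii) into the equivalence, which I would do by proving (iii) $\Rightarrow$ (ii) and (i) $\Rightarrow$ (iii).

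For (iii) $\Rightarrow$ (ii), the implication is immediate from \cref{ileqd}: since $\mu_I(f) \leq \mu_D(f)$ and both are non-negative integers, $\mu_D(f)=0$ forces $\mu_I(f)=0$. So this direction is a one-liner.

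For (i) $\Rightarrow$ (iii), assume $f$ is stable. Then by \cref{DICIS}(\textit{i}) every $D^k(f)$ is smooth of dimension $n+1-k$ (or empty); in particular $D^2(f)$ is smooth of dimension $n-1$. Applying the principle of iteration (the isomorphism in \cref{eq: iso principio it} and its unfolding version), $D^k(\pi) \cong D^{k+1}(f)$ is smooth as well for every $k\ge 1$. Since $\pi\colon D^2(f)\to\CC^n$ is now a map germ with smooth source whose multiple point spaces are all smooth of the expected dimension, \cref{DICIS}(\textit{i}) again tells us that $\pi$ is stable; equivalently the pair $(D^2(f),\pi)$ is stable. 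By \cref{CDMicis} this forces $\mu_I(D^2(f),\pi)=0$, and since $\mu_D(f)=\mu_I(D^2(f),\pi)$ when $f$ has corank one (as observed right after \cref{mud}), we conclude $\mu_D(f)=0$.

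Combining (i) $\Leftrightarrow$ (ii) with the two implications above gives the cycle (ii) $\Leftrightarrow$ (i) $\Rightarrow$ (iii) $\Rightarrow$ (ii), which proves the three statements are equivalent. No step is really an obstacle here: the only nontrivial ingredient is the citation of \cite[Theorem 3.9]{GimenezConejero2021}, and the rest is bookkeeping using \cref{ileqd}, \cref{DICIS}, \cref{CDMicis}, and the iteration principle already invoked in the paper.
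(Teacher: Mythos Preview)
Your proof is correct and follows essentially the same cycle as the paper's: (i) $\Rightarrow$ (iii) $\Rightarrow$ (ii) $\Rightarrow$ (i). The only difference is cosmetic: for (i) $\Rightarrow$ (iii) the paper says in one line that if $f$ is stable then all alternating Milnor numbers of $(D^2(f),\pi)$ vanish (since $f$ is its own stable perturbation, the relative homology groups in \cref{mukalticis} are trivially zero), whereas you spell this out via \cref{DICIS}, the iteration principle, and \cref{CDMicis} to conclude that $(D^2(f),\pi)$ is stable.
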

\begin{proof}
If $f$ is stable, then $\mu_D(f)=\mu_I\medpar{D^2(f),\pi}=0$ as so are all the alternating Milnor numbers. If $0=\mu_D(f)$,  then $\mu_I(f)=0$ by \cref{ileqd}, but, if $\mu_I(f)=0$, then  $f$ is stable by the weak Mond's conjecture for corank $1$ (see \cite[Theorem 3.9]{GimenezConejero2021}).
\qed\newline
\end{proof}

For an $f:(\CC^n,S)\to(\CC^{n+1},0)$ of corank one, we have studied the homology of the projection of $D^2(f_t)$ onto $\CC^n$ and we have compared it with the homology of the image of $f_t$, for $f_t$ a stable perturbation of $f$. One can keep looking for relations between the multiple point spaces using the same ideas. 

On one hand we can reproduce the roles of the image of $f_t$ and $D(f_t)$ easily:
$$
 \begin{tikzcd}
 \cdots\arrow[dr,dashed, two heads ] \arrow[r,dashed]&   D^3(f_t) \arrow[dr, two heads] \arrow[r, "\pi^3_2" ]& D^2(f_t) \arrow[dr, two heads] \arrow[r, "\pi^2_1" ]& D^1(f_t)=\CC^n \arrow[dr, two heads] \arrow[r, "f_t" ]&\CC^{n+1}\\
 \cdots& D_3^4(f_t)\arrow[u, hook] &D_2^3(f_t)\arrow[u, hook] &D_1^2(f_t)=D(f_t)\arrow[u, hook] &\textnormal{Im}(f_t)\arrow[u, hook]
\end{tikzcd},
$$

where $\pi_{k-1}^k:D^{k}(\bullet)\to D^{k-1}(\bullet)$ is the projection that forgets the last entry, for $k=2,\dots,d(f)$, and $D_{k-1}^k(\bullet)$ is the image of $\pi_{k-1}^k$.

On the other hand, although $\pi_{k-1}^k:D^k(f)\rightarrow D^{k-1}(f)$ has the problem that the target is an \textsc{icis} as well, the homology of $D_{k-1}^k(f_t)$ is well defined for any $\eqA$-finite $f:\CCS{n}\rightarrow\CCzero{n+1}$ of corank $1$. This is a consequence of \cref{SecuenciaH} applied to $\pi_{k-1}^k:D^k(f_t)\rightarrow D^{k-1}(f_t)$. Hence, again by the iteration principle, the Betti numbers of $D_{k-1}^k(f_t)$ are determined by the $\Sigma_{i+1}$-alternated homologies of $\left(D^{k+i}(F),D^{k+i}(f_t)\right)$, for $i=0,\dots,d(f)-k$. 

Furthermore, $D^{k+i}(f_t)$ is the unique fiber, up to isomorphism, of $D^{k+i}(f)$ and the action of the permutations does not depend on the stable perturbation, so these Betti numbers will be well defined. 

Finally, note that the homology will appear again in middle dimension for the pair of dimensions $(n,n+1)$, for the same reason it happens for $D(f_t)$. Hence, we will simply write $\beta_k(f)$ to denote $\beta_{n-k+1}\scalerel*{(}{\strut}D_{k-1}^k(f_t)\scalerel*{)}{\strut}$.
\newline


We have compared $\beta_1(f)\coloneqq\mu_I(f)$ with $\beta_2(f)=\mu_D(f)$ and, similarly, we can compare $\beta_k(f)$ with $\beta_{k+1}(f)$. This is very easy if $d(f)<n+1$ or we have a mono-germ, because we can forget about the the homology of the pair and the unfolding by the exact sequence of the pair. As we were saying, by \cref{SecuenciaH} and the iteration principle, arranging the terms in a convenient way, we have
\begin{equation}\label{eq: bk+1}
	\beta_{k+1}(f)=\rank\bigoplus_{i\geq2}H^{\Alt_i}\medpar{D^{k+i}(f_t)}\oplus H\medpar{D^{k+1}(f_t)}
	\end{equation}
	and
	\begin{equation}\label{eq: bk}
	\beta_k(f)=\rank\bigoplus_{i\geq1}H^{\Alt_{i+1}}\medpar{D^{k+i}(f_t)}\oplus H\medpar{D^k(f_t)},
	\end{equation}
from where we have omitted the index of the homology. Therefore, subtracting \cref{eq: bk} from \cref{eq: bk+1} and using the ideas of \cref{comparativak,comparativan}, we get
\begin{equation}
\begin{aligned}
\beta_{k+1}(f)-\beta_k(f)=&\sum_{i\geq2} \frac{\textnormal{rank }H\medpar{D^{k+i}(f_t)}^{(2,1,\dots,1)}}{i}+\mu\medpar{D^{k+1}(f)}\\
&-\mu\medpar{D^k(f)}-\textnormal{rank } H^{\Alt_2}\medpar{D^{k+1}(f_t)}
\label{eq:betis}
\end{aligned},
\end{equation}
with $(2,1,\dots,1)$ partition of $i+1$.

\begin{remark}
One can take this to a broader context as long as the first page of the spectral sequence collapses and the iteration principle works.
\end{remark}

We put this in practice with some examples.

\begin{example} \label{2to3k1}
For $f:\Gzero{2}{3}$ as in \cref{comparativak} and taking $k=1$ in \cref{eq:betis},
\begin{align*}
 \mu_D(f)-\mu_I(f)&= \frac{\textnormal{ rank }H_0\left(D^3(f_t)\right)^{(2,1)}}{2}+\mu\left(D^2(f)\right)-0-\mu_2^\Alt(f)\\
&=\frac{\textnormal{ rank }H_0\left(D^3(f_t)\right)^{(2,1)}}{2}+\rank H_1\left(D^2(f_t)\right)^{(2)},
 \end{align*}
where $H_1\big(D^2(f_t)\big)^{(2)}$ is the part of the homology that is fixed by the group $\Sigma_2$, i.e., the \textit{trivial isotype} of $\Sigma_2$. The last equality is due to the fact that there are only two irreducible representations of $\Sigma_2$, the alternating and the \textit{trivial} one.
\end{example}

\begin{example} \label{2to3k2}
For $f:\Gzero{2}{3}$ as in \cref{comparativak}, and taking $k=2$ in \cref{eq:betis},
\begin{align*}
 \beta_3(f)-\mu_D(f)&= \textnormal{rank }H_0\left(D^3(f_t)\right)-\mu\left(D^2(f)\right)-\textnormal{rank }H_0^{\Alt_2}\left(D^3(f_t)\right)\\
&=-\mu\left(D^2(f)\right)+ \textnormal{rank } H_0\left(D^3(f_t)\right)^{(2)},
 \end{align*}
following the same notation as above.
\end{example}

Note that the triple points of $f_t$ are strict in \cref{2to3k1,2to3k2}, i.e.,  in $D^3(f_t)$ the points are the $\Sigma_3$-orbit of $(a,b,c)$ with $a\neq b\neq c\neq a$. Say we have $T$ triple points, then 
\begin{itemize}
	\item  $\textnormal{ rank }H_0\left(D^3(f_t)\right)^{(2,1)}=4T$, as it is the complement of the alternating and trivial isotype and both representations have dimension one.
	\item $\textnormal{rank } H_0\left(D^3(f_t)\right)^{(2)}=3T$, as it is the trivial isotype of $\Sigma_2$ fixing the first entry (it has elements of the form $(a,b,c)+(a,c,b)$).
	\item $\rank H_0^\Alt\left(D^3(f_t)\right)=3T$, similarly as the previous case.
	\item $\beta_3(f)=6T$, as it is simply counting the elements of the orbits.
\end{itemize}
In conclusion, we have the following results, which were obtained also by Houston with similar invariants (see \cite[Proofs of Theorems 2.7 and 2.8]{Houston2001}).
\begin{theorem}
For $f:\Gzero{2}{3}$ $\eqA$-finite and being $T$ the number of triple points of a stable perturbation of $f$,
$$\mu_D(f)=\mu\left(D^2(f)\right)+3T.$$
\end{theorem}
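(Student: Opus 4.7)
The strategy is to substitute into the identity recorded in \cref{2to3k2},
\[\beta_3(f)-\mu_D(f) = -\mu(D^2(f)) + \rank H_0(D^3(f_t))^{(2)},\]
the two combinatorial counts assembled in the bullet list immediately preceding the theorem. Once we have $\beta_3(f)=6T$ and $\rank H_0(D^3(f_t))^{(2)} = 3T$, a single rearrangement gives
\[\mu_D(f) = \beta_3(f) + \mu(D^2(f)) - \rank H_0(D^3(f_t))^{(2)} = 6T + \mu(D^2(f))-3T,\]
which is exactly the claimed equality $\mu_D(f)=\mu(D^2(f))+3T$.

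For self-containedness I would briefly re-derive the two counts. The corank-one hypothesis together with \cref{DP} forces the triple points of a stable perturbation $f_t$ to be strict, so $\Sigma_3$ acts freely on $D^3(f_t)$ with exactly $T$ orbits of size six; the projection $\pi_2^3$ forgetting the last coordinate embeds each such orbit into $D^2(f_t)$, whence $\beta_3(f)=\beta_0(D_2^3(f_t))=6T$. For the second count I would restrict the $\Sigma_3$-action on a single six-element orbit to the subgroup $\Sigma_2=\langle (2\,3)\rangle$, which is the copy of $\Sigma_2$ that fixes the first entry and therefore the one that the iteration principle $D^2(\pi_t)\cong D^3(f_t)$ transports from $\pi$. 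Under this subgroup the orbit splits into three size-two orbits, each a copy of the regular $\Sigma_2$-representation; each contributes one dimension to the trivial isotype, so in total one obtains $3T$.

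There is no real obstacle: the spectral-sequence work is already packaged inside \cref{eq:betis,2to3k2}, and the representation-theoretic dissection of one $\Sigma_3$-orbit is routine. The only mild care needed is to verify that the superscript $(2)$ refers to the trivial isotype of the subgroup of $\Sigma_3$ that fixes the first entry — which it does by construction of the iteration principle — so that the count $3T$ is the correct quantity to subtract.
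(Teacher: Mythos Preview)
Your argument via \cref{2to3k2} is correct for corank one and is essentially a different bookkeeping of the same bullet list the paper uses: the paper's one-line proof in the corank-one case invokes \cref{Houston formula} applied to $\medpar{D^2(f),\pi}$, which reads off $\mu_D(f)=\mu\medpar{D^2(f)}+\rank H_0^{\Alt_2}\medpar{D^3(f_t)}=\mu\medpar{D^2(f)}+3T$ directly from the third bullet, whereas you route through \cref{2to3k2} and use the second and fourth bullets instead. Both are legitimate and equally short.

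There is, however, a genuine gap. The theorem as stated does \emph{not} carry a corank-one hypothesis (note the contrast with \cref{comparativak} and the examples preceding the theorem, all of which do). A germ $f\colon(\CC^2,0)\to(\CC^3,0)$ can perfectly well have corank two, and in that case none of the machinery you invoke is available as written: \cref{DP}, \cref{2to3k2}, and the bullet list all sit under the corank-one umbrella, and the identification $\mu_D(f)=\mu_I\medpar{D^2(f),\pi}$ that underlies \cref{Houston formula} rests on $D^2(f)$ being an \textsc{icis}, which is only guaranteed in corank one. The paper handles this explicitly with a separate sentence: in the pair of dimensions $(2,3)$ one still has $\dim D^2(f)=1$ and $\dim D^3(f)=0$ regardless of corank, so the relevant homology is again concentrated in middle dimension and the same orbit-counting argument goes through. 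You need to add this case --- or at least flag that the corank-two situation requires a word --- otherwise your proof covers strictly less than the statement claims.
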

\begin{proof}
If $f$ has corank 1, we are done by \cref{Houston formula}. If $f$ does not have corank one, $D^2$ has dimension one and $D^3$ is zero dimensional, so the homology is in middle dimension and the same argument can be applied.

\qed\newline
\end{proof}

Similarly:

\begin{theorem}
For $f:\Gzero{3}{4}$ as in \cref{comparativak} and being $Q$ the number of quadruple points of a stable perturbation $f_t$ of $f$,
$$\mu_D(f)=4Q+\mu\medpar{D^2(f)}+\frac{\mu\medpar{D^3(f)}-\mu_3^T(f)+\mu_3^\Alt(f)}{2},$$
where $\mu_3^T(f)\coloneqq\textnormal{rank } H_1\medpar{D^3(f_t)}^{(3)}$ is the invariant homology by $\Sigma_3$ and $\mu_3^\Alt(f)$ is defined as in \cref{mukalticis}, i.e., the alternating homology by $\Sigma_3$.
\end{theorem}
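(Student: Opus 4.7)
The plan is to apply \cref{Houston formula} directly to the pair $(D^2(f),\pi)$, which is legitimate because $f$ has corank one: by \cref{DICIS}, $D^2(f)$ is an \textsc{icis} of dimension $2$, and the coordinate projection $\pi$ has corank at most one. This gives
$$\mu_D(f)=\mu_I\medpar{D^2(f),\pi}=\sum_{k\geq 1}\mu_k^\Alt\medpar{D^2(f),\pi}.$$
By the iteration principle $D^j(\pi_t)\cong D^{j+1}(f_t)$, so $D^j(\pi_t)$ has dimension $3-j$; hence the only possibly nonzero contributions come from $k=1,2,3$.

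For $k=1$, the exact sequence of the pair gives $\mu_1^\Alt\medpar{D^2(f),\pi}=\mu\medpar{D^2(f)}$. For $k=3$, iteration identifies $D^3(\pi_t)$ with $D^4(f_t)$, a set of $24Q$ points (one $\Sigma_4$-orbit of size $24$ per quadruple point). The stabiliser $\Sigma_3\subset\Sigma_4$ of the first entry splits each such orbit into four $\Sigma_3$-orbits of size $6$, each carrying the regular representation of $\Sigma_3$. Because the sign isotype of the regular representation has dimension one, every quadruple point contributes $4$, and the exact sequence of the pair (using contractibility of $D^4(F)$) yields $\mu_3^\Alt\medpar{D^2(f),\pi}=4Q$.

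The main step is $k=2$. Iteration and the exact sequence give
$$\mu_2^\Alt\medpar{D^2(f),\pi}=\rank H^{\Alt_2}_1\medpar{D^3(f_t)},$$
with $\Sigma_2$ acting as the subgroup of $\Sigma_3$ that fixes the first entry. The group $\Sigma_3$ has only three irreducible representations---trivial, sign, and standard of dimension two---so the $\Sigma_3$-representation $H_1\medpar{D^3(f_t)}$ decomposes into isotypes of dimensions $\mu_3^T(f)$, $\mu_3^\Alt(f)$, and some $2u$, which together sum to $\mu\medpar{D^3(f)}$ (the Milnor number of the one-dimensional \textsc{icis} $D^3(f)$). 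The branching rule from $\Sigma_3$ to $\Sigma_2$ sends trivial to trivial, sign to sign, and the standard representation to trivial plus sign, so the $\Sigma_2$-alternating part has rank $\mu_3^\Alt(f)+u$. Solving $2u=\mu\medpar{D^3(f)}-\mu_3^T(f)-\mu_3^\Alt(f)$ gives exactly the required value, and summing the three contributions yields the stated formula.

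The main obstacle is the representation-theoretic bookkeeping for the $k=2$ term: recognising that the only unknown in the isotype decomposition of $H_1\medpar{D^3(f_t)}$---the multiplicity of the standard isotype---is already determined by $\mu\medpar{D^3(f)}$, $\mu_3^T(f)$, and $\mu_3^\Alt(f)$. The computation closes neatly only because $\Sigma_3$ has exactly three irreducibles and the standard restricts as trivial plus sign; the analogous formula at higher multiple points would require more care, in the spirit of \cref{comparativak}.
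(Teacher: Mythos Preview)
Your argument is correct. The paper's proof reaches the same destination by a slightly different route: it applies the difference formula \eqref{eq:betis} with $k=1$ to compute $\mu_D(f)-\mu_I(f)$ in terms of the $(2,1)$-isotype of $H_1\medpar{D^3(f_t)}$ and the $(2,1,1)$-isotype of $H_0\medpar{D^4(f_t)}$, and then substitutes back $\mu_I(f)=\mu_2^\Alt(f)+\mu_3^\Alt(f)+\mu_4^\Alt(f)$. You instead apply \cref{Houston formula} directly to $(D^2(f),\pi)$ and evaluate each $\mu_k^\Alt\medpar{D^2(f),\pi}$ separately, which avoids the detour through $\mu_I(f)$ and the subtraction--addition of $\mu_2^\Alt(f)$. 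Both arguments hinge on the same representation-theoretic observation---that $\Sigma_3$ has exactly three irreducibles, so the standard isotype of $H_1\medpar{D^3(f_t)}$ is determined by $\mu\medpar{D^3(f)}$, $\mu_3^T(f)$ and $\mu_3^\Alt(f)$---and on the orbit count for $D^4(f_t)$. Your organisation is a bit more transparent; the paper's version has the advantage of fitting into the general comparison \eqref{eq:betis} between successive $\beta_k$.
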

\begin{proof}
By \cref{eq:betis} for $k=1$, we have that
\begin{align*}
\mu_D(f)-\mu_I(f)=&\frac{\textnormal{rank }H\medpar{D^{3}(f_t)}^{(2,1)}}{2}+\frac{\textnormal{rank }H\medpar{D^{4}(f_t)}^{(2,1,1)}}{3}\\
&+\mu\medpar{D^2(f)}-0-\mu_2^\Alt(f).
\end{align*}
Now, observe that $\Sigma_3$ has only three irreducible representations: the trivial representation, alternating representation and the $(2,1)$-representation. With this in mind, 
$$\frac{\textnormal{rank }H\medpar{D^{3}(f_t)}^{(2,1)}}{2}=\frac{\textnormal{rank }H\medpar{D^{3}(f_t)}-\mu_3^T(f)-\mu_3^\Alt(f)}{2}.$$

Finally, recall that $\mu_I(f)=\mu_2^\Alt(f)+\mu_3^\Alt(f)+\mu_4^\Alt(f)$ (see \cite[Definition 3.11]{Houston2010} or \cref{Houston formula}). Furthermore,
$$ \frac{\textnormal{rank }H\medpar{D^{4}(f_t)}^{(2,1,1)}}{3}=3Q $$
and
$$ \mu_4^\Alt(f)=Q.$$
The result follows from here.
\qed\newline
\end{proof}

There is another relation between $\mu_I$ and $\mu_D$.
Let $f:(\CC^n,S)\to(\CC^{n+1},0)$ be $\eqA$-finite of corank one and let $g:(\CC^{n-1},S')\to(\CC^{n},0)$ be a transverse slice. By the Lê-Greuel type formula (recall \cref{LeGreuel,LeGreuelICIS}), we know that
\[
\mu_I(f)+\mu_I(g)=\textnormal{\texttt{\#}}\Sigma(p|_{Z_s})=\sum_{\mathcal Q}\textnormal{\texttt{\#}}\Sigma\big(p|_{\mathcal Q(f_s)}\big),
\]
where $p:\CC^{n+1}\to\CC$ is the generic projection which defines the transverse slice, $Z_s$ is the image of a stable perturbation of $f$, $\mathcal Q$ runs through all the stable types in the target and $\mathcal{Q}(f_s)$ denotes the points of $f_s$ in the target that are of stable type $\mathcal{Q}$.

Using the same argument, we have that
\[
\mu_D(f)+\mu_D(g)=\textnormal{\texttt{\#}}\Sigma(p\circ f_s)=\sum_{\mathcal Q_S}\textnormal{\texttt{\#}}\Sigma\big((p\circ f_s)|_{\mathcal Q_S(f_s)}\big),
\]
where now $\mathcal Q_S$ runs through all the stable types in the source.

If a stable type in the source $\mathcal Q_S$ corresponds to $\mathcal Q$ in the target, the restriction $f_s:\mathcal Q_S(f_s)\to\mathcal Q(f_s)$ is a local diffeomorphism and is $r$-to-one, where $r=r(\mathcal Q)$ is the number of branches of the stable type $\mathcal Q$. Hence,
\[
\textnormal{\texttt{\#}}\Sigma((p\circ f_s)|_{\mathcal Q_S(f_s)})=r(\mathcal Q)\textnormal{\texttt{\#}}\Sigma(p|_{\mathcal Q(f_s)}).
\]

Therefore, if $\mu_I(f_t)$ and $\mu_I(g_t)$ are constant in a family, this implies that $\mu_D(f_t)$ and $\mu_D(g_t)$ are also constant, by upper semi-continuity.

\begin{proposition}\label{prop: quitar mud}
Let $f:(\CC^n,S)\to(\CC^{n+1},0)$ be $\eqA$-finite of corank one and let $g:(\CC^{n-1},S')\to(\CC^{n},0)$ be a transverse slice. If $\mu_I(f_t)$ and $\mu_I(g_t)$ are constant in a family, then $\mu_D(f_t)$ and $\mu_D(g_t)$ are also constant.
\end{proposition}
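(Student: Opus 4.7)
The plan is to leverage the two Lê-Greuel type formulas already displayed just above the statement, together with upper semi-continuity, to peel off the individual summands. Concretely, for each $t$ in the parameter space of the family, apply \cref{LeGreuel} (or \cref{LeGreuelICIS}) to $f_t$ and $g_t$ to obtain
\begin{equation*}
\mu_I(f_t)+\mu_I(g_t)=\sum_{\mathcal{Q}}\textnormal{\texttt{\#}}\Sigma\bigl(p|_{\mathcal{Q}((f_t)_s)}\bigr),
\end{equation*}
where the sum is over stable types $\mathcal{Q}$ in the target. The analogous identity for the double point Milnor numbers, obtained by counting critical points of $p\circ f_s$ on $X_s$ instead of $p$ on $Z_s$, reads
\begin{equation*}
\mu_D(f_t)+\mu_D(g_t)=\sum_{\mathcal{Q}}r(\mathcal{Q})\,\textnormal{\texttt{\#}}\Sigma\bigl(p|_{\mathcal{Q}((f_t)_s)}\bigr),
\end{equation*}
because $f_s\colon\mathcal{Q}_S(f_s)\to\mathcal{Q}(f_s)$ is an $r(\mathcal{Q})$-to-one local diffeomorphism, as noted in the text.

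Next I would invoke upper semi-continuity of each contribution $\textnormal{\texttt{\#}}\Sigma\bigl(p|_{\mathcal{Q}((f_t)_s)}\bigr)$. This follows because these numbers admit an interpretation as vanishing cycles: more concretely, as the sum over the points of $\mathcal{Q}(f_s)$ of the local Milnor numbers of $p$ restricted to the corresponding stratum, each of which is upper semi-continuous under deformation. Since the finite sum $\sum_{\mathcal{Q}}\textnormal{\texttt{\#}}\Sigma(p|_{\mathcal{Q}((f_t)_s)})=\mu_I(f_t)+\mu_I(g_t)$ is constant by hypothesis, and each summand is a non-negative upper semi-continuous function of $t$, each summand must itself be constant in $t$.

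Substituting this conclusion into the weighted formula shows that $\mu_D(f_t)+\mu_D(g_t)$ is constant on $t$. To split this into the two claimed constancies, I would apply the upper semi-continuity of $\mu_D$ individually: by \cref{uscontinuity} (applied to the pair $(D^2(f_t),\pi)$, whose image Milnor number is $\mu_D(f_t)$), both $\mu_D(f_t)$ and $\mu_D(g_t)$ are upper semi-continuous; since their sum is constant, neither can jump, so both are constant.

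The only genuinely delicate point is the upper semi-continuity of $\textnormal{\texttt{\#}}\Sigma(p|_{\mathcal{Q}((f_t)_s)})$, and relatedly the implicit claim that the stable types and the number of branches $r(\mathcal{Q})$ behave uniformly across the family; this is the step I would expect to require the most care. Everything else is bookkeeping with the Lê-Greuel formulas and the standard trick that a sum of upper semi-continuous non-negative functions that is constant forces each summand to be constant.
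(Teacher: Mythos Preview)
Your proposal is correct and follows essentially the same route as the paper: the argument in the paper is precisely the discussion immediately preceding the statement, combining the two L\^e--Greuel identities via the $r(\mathcal Q)$-to-one covering and then invoking upper semi-continuity. The one place where the paper is slightly more precise is the step you yourself flag as delicate: the summands $\textnormal{\texttt{\#}}\Sigma\bigl(p|_{\mathcal Q((f_t)_s)}\bigr)$ are identified (see Corollary~\ref{LeGreuelM} and the proof of Theorem~\ref{equisingularidad}) as top polar multiplicities $m_d(f_t,\mathcal Q)$ for positive-dimensional strata together with the count $c(f_t)$ of $0$-stable singularities, and their upper semi-continuity is then a citation to Gaffney~\cite[Proposition~4.15]{Gaffney1993} (with $c(f_t)$ constant once the family is excellent, which follows from constancy of $\mu_I(f_t)$); your ``local Milnor numbers of $p$ on the stratum'' description is a bit loose, and replacing it by the polar-multiplicity identification makes the step rigorous.
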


The previous argument could make the reader think that, when we deal with Whitney equisingularity, controlling the target is enough to control the source, or vice versa. This idea is wrong in general. We take care of the details to control the target and the source in \cref{sec: we}, and the problem with the previous idea is that $\mu_D(g)$ is not what we need (see \cref{LeGreuelM,equisingularidad}). See also \cref{ex: source no suficiente} for an example where this idea fails.


\section{Whitney equisingularity}\label{sec: we}

In \cite{Gaffney1993}, Gaffney showed that a  one parameter family $f_t:\GS{n}{n+1}$ is Whitney equisingular if, and only if, it is excellent (in Gaffney's sense) and all the polar multiplicities in the source and target are constant on $t$. The problem is that, for each $d$-dimensional stratum in the source or target, we need $d+1$ invariants, so the total number of invariants we need to control the Whitney equisingularity is huge. 

In this section, we follow the approach of Teissier in \cite{Teissier1982} for hypersurfaces with isolated singularities or Gaffney in \cite{Gaffney1996} for {\sc icis} to show that, in the corank one case, Whitney equisingularity can be characterized in terms of the $\mu_I^*(f_t)$ and $\mu_I^*\medpar{D^2(f_t),\pi}$ sequences, obtained by taking successive transverse slices of $f_t$.
\newline

In \cref{sec:Le-Greuel}, we already made use of the stratification $\mathscr S$ by stable types of the image of a locally stable mapping $f:X\to Y$ between smooth manifolds $X$ and $Y$ with $\dim X=n$ and $\dim Y=n+1$ and with only corank one singularities. Since each stable type is determined by its Mather algebra $\mathcal{Q}$, we can denote by $\mathcal{Q}(f)$ the stratum of points $y\in f(X)$ such that the multi-germ of $f$ at $y$ has type $\mathcal{Q}$. 
Because $f$ is stable, $\mathscr S$ is a partial stratification of $f$ in the sense of \cite[Proposition 3.1]{Gibson1976a}. It follows that we have an induced stratification $\mathscr S'$ on $X$, with strata $\mathcal{Q}_S(f)=f^{-1}\medpar{\mathcal{Q}(f)}$, such that
 $f:X\to Y$ is a Thom stratified map.

Suppose, now, that we have an $\eqA$-finite germ $f:(X,S)\rightarrow(\CC^{n+1},0)$ of corank one, where $X$ is an $n$-dimensional {\sc icis}. By \cref{MGcriterion}, we can take a finite representative $f:X\to Y$, where $Y$ is an open neighbourhood of 0 in $\CC^{n+1}$ such that $f^{-1}(0)=S$ and $f:X\setminus S\to Y\setminus\{0\}$ is a locally stable mapping. The stratification by stable types on $f:X\setminus S\to Y\setminus\{0\}$ extends to $f:X\to Y$ just by adding $S$ and $\{0\}$ as strata in the source and target, respectively. By shrinking the representative if necessary, we can always assume that $f$ has no $0$-stable singularities, so $S$ and $\{0\}$ are in fact the only 0-dimensional strata.

Finally, we give a version of excellency, stratification by stable types and Whitney equisingularity for unfoldings of germs on \textsc{icis}.

Let $(\mathcal{X},\pi,F,j)$ be a one parameter unfolding of $(X,f)$ which is origin preserving (that is, $S\subset X_t$ and $f_t(S)=0$, for all $t$) so we can see the unfolding as a family of germs $f_t:(X_t,S)\to(\CC^{n+1},0)$.

\begin{definition} We say that $(\mathcal{X},\pi,F,j)$ is \emph{excellent} if there exist a representative $F:\mathcal X\to Y\times U$, where $Y$ and $T$ are open neighbourhoods of the origin in $\CC^{n+1}$ and $\CC$ respectively, such that for all $t\in T$, $f_t^{-1}(0)=S$ and $f_t:X_t\setminus S\to Y\setminus\{0\}$ is a locally stable mapping with no $0$-stable singularities.
\end{definition}

When the unfolding is excellent, $F:\mathcal X\setminus S\times\{0\}\to (Y\setminus\{0\})\times T$ is also stable, so we have a well defined stratification by stable types. This extends to $F:\mathcal X\to Y\times T$ just by adding $S\times T$ and $\{0\}\times T$ as strata in the source and target, respectively. These are, in fact, the only 1-dimensional strata.

\begin{definition}\label{def:WE} We say that $(\mathcal{X},\pi,F,j)$ is \emph{Whitney equisingular} if $F:\mathcal X\to Y\times T$ is a Thom stratified map with the stratification by stable types.
\end{definition}

Now, we recall the definition of polar multiplicities, following Gaffney in \cite{Gaffney1993}.

\begin{definition}
Let $f:(X,S)\rightarrow(\CC^{n+1},0)$ be $\eqA$-finite of corank one. For each stable type $\mathcal Q$ such that $d=\dim \mathcal Q(f)>0$ and for each $i=0,\dots,d-1$, the \emph{$i$th-polar multiplicities} in the source and target are
\[
m_i(f,\mathcal Q)=m_0\left(P_i\medpar{\overline{\mathcal Q(f)}}\right),\quad m_i(f,\mathcal Q_S)=m_0\left(P_i\medpar{\overline{\mathcal Q_S(f)}}\right),
\]
where the bar means the Zariski closure and $P_i(Z)$ is the absolute polar variety of codimension $i$ of $Z$ in the sense of Lê and Teissier in \cite[p. 462]{Trang1981}.

The \emph{$d$th-stable multiplicities} are
\begin{align*}
m_d(f,\mathcal Q)&=\deg\left(\pi:P_d\medpar{\overline{\mathcal Q(F)},\pi}\to(\CC^r,0)\right),\\
m_d(f,\mathcal Q_S)&=\deg\left(\pi:P_d\medpar{\overline{\mathcal Q_S(F)},\pi}\to(\CC^r,0)\right),
\end{align*}
where, now, $(\mathcal{X},\pi,F,j)$ is an $r$-parameter versal unfolding of $(X,f)$ and $P_d(\mathcal Z,\pi)$ is the relative polar variety of codimension $d$ of a family $\pi:\mathcal Z\to\CC^r$ (see \cite[Section IV.1]{Teissier1982}).

Finally, we denote by $c(f)$ the number of all $0$-stable singularities that appear in a stable perturbation of $(X,f)$.
\end{definition}

It follows from the definition of relative polar variety that the top polar multiplicity $m_d(f,\mathcal Q)$ is equal to the number of critical points of $p|_{\mathcal Q(f_s)}$, where $p:\CC^{n+1}\to\CC$ is a generic linear projection and $(X_s,f_s)$ is a stable perturbation of $(X,f)$. Since the $0$-stable singularities are also critical points of $p$ in the stratified sense, we get the following reformulation of \cref{LeGreuel}:

\begin{corollary}\label{LeGreuelM}
For a corank $1$ and $\eqA$-finite multi-germ $f:(X,S)\rightarrow (\CC^{n+1},0)$, $X$ an \textsc{icis} of dimension $\dim X=n>1$,  let $p:\CC^{n+1}\rightarrow\CC$ be a generic linear projection which defines a transverse slice $g:(Y,S)\rightarrow (\CC^n,0)$, where $Y=X\cap\left(p\circ f\right)^{-1}\left(0\right)$. Then,
$$\mu_I(X,f)+\mu_I(Y,g)=\sum_{\textnormal{dim}\mathcal{Q}(f_s)=d>0}m_d(f,\mathcal{Q})+c(f),$$
where the sum runs on all $\mathcal{Q}$ such that $\textnormal{dim}\mathcal{Q}(f_s)=d$ and all $d>0$.
\end{corollary}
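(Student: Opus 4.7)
The plan is to deduce this corollary directly from \cref{LeGreuelICIS} by reinterpreting the number $\textnormal{\texttt{\#}}\Sigma(p|_{Z_s})$ in terms of the polar multiplicities stratum by stratum. First, I would invoke \cref{LeGreuelICIS} to write
\[
\mu_I(X,f)+\mu_I(Y,g)=\textnormal{\texttt{\#}}\Sigma(p|_{Z_s}),
\]
where $Z_s=\textnormal{Im}(f_s)$ is endowed with the Whitney stratification by stable types $\mathscr S$ discussed just before the statement. Since $f_s$ is locally stable off the finite set of $0$-stable singularities, $\mathscr S$ has only one $0$-dimensional stratum, namely the collection of $0$-stable points, and positive-dimensional strata $\mathcal Q(f_s)$ indexed by the stable types $\mathcal Q$ with $\dim\mathcal Q(f_s)=d>0$.

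Next, I would split $\textnormal{\texttt{\#}}\Sigma(p|_{Z_s})$ according to this stratification. Stratified critical points of $p$ are of two kinds: critical points of the restriction $p|_{\mathcal Q(f_s)}$ on positive-dimensional strata, and the $0$-dimensional strata themselves (which are automatically stratified critical points of any map). The latter are precisely the $0$-stable singularities, contributing exactly $c(f)$ to the count. For the former, I would use the very definition of the top polar multiplicity: $m_d(f,\mathcal Q)$ equals the degree of the relative polar variety $\pi\colon P_d(\overline{\mathcal Q(F)},\pi)\to(\CC^r,0)$, which by construction (see \cite[Section IV.1]{Teissier1982}) counts the number of critical points of a generic linear projection of the ambient space restricted to $\mathcal Q(f_s)$. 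Since $p$ has been chosen generic, these are exactly $\textnormal{\texttt{\#}}\Sigma(p|_{\mathcal Q(f_s)})$.

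Putting everything together,
\[
\textnormal{\texttt{\#}}\Sigma(p|_{Z_s})=\sum_{\dim\mathcal Q(f_s)=d>0}\textnormal{\texttt{\#}}\Sigma(p|_{\mathcal Q(f_s)})+c(f)=\sum_{\dim\mathcal Q(f_s)=d>0}m_d(f,\mathcal Q)+c(f),
\]
which combined with the Lê-Greuel identity above gives the claimed formula. The only genuinely delicate point in this plan is verifying carefully that the generic projection $p$ used in \cref{LeGreuelICIS} can be chosen simultaneously generic for \emph{every} polar variety $P_d(\overline{\mathcal Q(f)})$, but this is standard since the set of non-generic directions for each stratum is a proper Zariski closed subset of $\mathbb P(\CC^{n+1})^\vee$ and there are only finitely many strata; the transversality conditions defining the transverse slice $g$ and those defining the top polar multiplicities are all generic, so a common $p$ exists.
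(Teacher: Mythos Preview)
Your proposal is correct and follows essentially the same approach as the paper: the paper also deduces the corollary by observing (in the paragraph immediately preceding the statement) that the top polar multiplicity $m_d(f,\mathcal Q)$ equals $\textnormal{\texttt{\#}}\Sigma(p|_{\mathcal Q(f_s)})$ for generic $p$, and that the $0$-stable singularities account for the remaining stratified critical points, so the identity is a direct reformulation of \cref{LeGreuelICIS}. Your additional remark on the simultaneous genericity of $p$ for all strata is a welcome clarification that the paper leaves implicit.
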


Now, we define the $\mu_I^*$ and $\mu_D^*$-sequences of a corank one map germ.

\begin{definition}\label{sequences}
Consider $f:(X,S)\rightarrow (\CC^{n+1},0)$ $\eqA$-finite of corank one, with $X$ an \textsc{icis} of dimension $\dim X=n>1$. We take a generic flag of vector subspaces
\[
H_{(n-1)}\subset\dots\subset H_{(1)}\subset H_{(0)}=\CC^{n+1},
\]
such that $H_{(i)}$ has codimension $i$. We put $X_{(i)}=X\cap f^{-1}\left(H_{(i)}\right)$ and $f_{(i)}=\left.f\right|_{X_{(i)}}$
and define the $\mu_I^*$\textit{-sequence of $(X,f)$} as
$$\mu_I^*(X,f)\coloneqq\left(\mu_I(X,f),\mu_I\left(X_{(1)},f_{(1)},\right),\dots,\mu_I\left(X_{(n-1)},f_{(n-1)}\right)\right).$$
Sometimes we do not consider the top image Milnor number $\mu_I(X,f)$ in the $\mu_I^*$-sequence and, then, we denote it by $\tilde\mu_I^*(X,f)$.
\end{definition}

It is well-known that, by generic, we mean a suitable Zariski open in a convenient space, and this definition does not depend on the generic flag we are taking. The details can be seen in \cite[pp. 1380--1381]{Marar2014}.
\newline

In next lemma, we see that all polar multiplicities can be seen as top polar multiplicities of the corresponding transverse slices.

\begin{lemma}\label{lem:polar} With the hypothesis and notation of \cref{sequences}, suppose that $\mathcal Q$ is a stable type such that $\dim\mathcal Q(f_s)=d>0$, for a stable perturbation $(X_s,f_s)$ of $(X,f)$. Then,
\[
m_{d-i}\left(f_{(i)},\mathcal{Q}\right)=m_{d-i}\left(f,\mathcal{Q}\right),\ i=1,\dots,d.
\]
\end{lemma}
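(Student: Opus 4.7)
The plan is to reduce both sides to the same concrete count: the number of points in which a certain polar variety meets the generic linear subspace $H_{(i)}$.

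Fix a stable perturbation $(X_s,f_s)$ of $(X,f)$; by transversality of the generic flag to the stratification by stable types, its restriction to the slice yields a stable perturbation of $(X_{(i)},f_{(i)})$, and $\mathcal{Q}(f_{(i),s})=\mathcal{Q}(f_s)\cap H_{(i)}$ is a smooth $(d-i)$-dimensional submanifold of $H_{(i)}$. Since $(d-i)=\dim\mathcal{Q}(f_{(i)})$, the number $m_{d-i}(f_{(i)},\mathcal{Q})$ is the top (stable) multiplicity for the slice, and by the geometric interpretation of the relative polar variety used in the proof of \cref{LeGreuelM} it equals the count $\#\Sigma(q|_{\mathcal{Q}(f_s)\cap H_{(i)}})$ of critical points of a generic linear projection $q\colon H_{(i)}\to\CC$. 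I would choose $q$ with $\ker q=H_{(i+1)}$; then the transversality identity $T_x(\mathcal{Q}(f_s)\cap H_{(i)})=T_x\mathcal{Q}(f_s)\cap H_{(i)}$ turns the critical set into
$$\{x\in\mathcal{Q}(f_s)\cap H_{(i)}\,:\,T_x\mathcal{Q}(f_s)\cap H_{(i)}\subset H_{(i+1)}\}=P_{d-i}\bigl(\mathcal{Q}(f_s),H_{(i+1)}\bigr)\cap H_{(i)},$$
by the very definition of the polar variety of codimension $d-i$ with respect to $H_{(i+1)}$.

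For the right-hand side, $m_{d-i}(f,\mathcal{Q})=m_0(P_{d-i}(\overline{\mathcal{Q}(f)}))$ is the multiplicity at the origin of an $i$-dimensional germ. By the classical formula expressing the multiplicity of an $i$-dimensional germ at the origin as its number of intersection points with a generic codimension-$i$ affine linear subspace close to $0$, together with conservation of multiplicity along the flat deformation $f\rightsquigarrow f_s$, this multiplicity equals $\#\bigl(P_{d-i}(\mathcal{Q}(f_s),H_{(i+1)})\cap H_{(i)}\bigr)$ computed in a Milnor ball of the stable perturbation. Stringing the equalities together gives $m_{d-i}(f_{(i)},\mathcal{Q})=m_{d-i}(f,\mathcal{Q})$. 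The edge case $i=d$ is the same statement applied to a zero-dimensional stratum: $\#\mathcal{Q}(f_{(d),s})=\#(\mathcal{Q}(f_s)\cap H_{(d)})=m_0(\overline{\mathcal{Q}(f)})$.

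The main obstacle is identifying $m_{d-i}(f_{(i)},\mathcal{Q})$, defined through the relative polar variety in a versal unfolding, with the concrete critical-point count on the stable perturbation. This identification is classical: one realises a stabilisation as a generic one-parameter section of the versal unfolding, under which the relative polar variety specialises to the critical locus of the projection restricted to the stratum in the stable fibre. An alternative is to proceed by induction on $i$, reducing to the essentially identical case $i=1$ (top polar multiplicity of a one-codimensional slice), which is precisely the content of the above geometric argument.
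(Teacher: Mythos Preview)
Your argument is correct in outline and arrives at the same conclusion, but the route differs from the paper's in a way worth noting. You handle all $i$ simultaneously by introducing an external stable perturbation $f_s$ and identifying both sides with $\#\bigl(P_{d-i}(\mathcal{Q}(f_s),H_{(i+1)})\cap H_{(i)}\bigr)$; the step carrying the weight is the ``conservation of multiplicity along the flat deformation $f\rightsquigarrow f_s$'', which you invoke but do not justify in detail (it is standard, but it does require knowing that the relative polar variety of the total family $\overline{\mathcal{Q}(F)}\to\CC$ specialises correctly at $s=0$ and that nothing escapes the Milnor ball). The paper instead argues by induction, reducing to the one-step comparison $m_{d-i}(f_{(1)},\mathcal{Q})=m_{d-i}(f,\mathcal{Q})$. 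For $i\ge 2$ this is a direct citation of L\^e--Teissier \cite[Corollary 4.1.9]{Trang1981} (polar multiplicities are preserved under a generic hyperplane section). For the top case $i=1$ the paper uses the key observation---which you mention only in passing at the end---that $f$ is \emph{already} a stabilisation of $f_{(1)}$, via the slicing parameter $\ell$: no external $f_s$ is introduced, and both the relative polar curve computing $m_{d-1}(f_{(1)},\mathcal{Q})$ and the absolute polar curve computing $m_{d-1}(f,\mathcal{Q})$ are literally the same set $P_{d-1}\bigl(\overline{\mathcal{Q}(f)}\bigr)$, whose degree over $\ell$ gives both numbers at once. Your approach is more uniform in $i$; the paper's is more economical and sidesteps the conservation argument by never leaving the original germ.
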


\begin{proof}
By induction, it is enough to prove that 
\[m_{d-i}\left(f_{(1)},\mathcal{Q}\right)=m_{d-i}\left(f,\mathcal{Q}\right),\ i=1,\dots,d.
\] 
To see this, we first observe that $\mathcal Q\medpar{f_{(1)}}=\mathcal Q(f)\cap H_{(1)}$, so the equality for $i=2,\dots,d$ follows directly from \cite[Corollary 4.1.9]{Trang1981}.

For $i=1$, we can see $f$ as a stabilisation of $f_{(1)}$. 
If $\ell:\CC^{n+1}\to\CC$ is the linear form such that $H_{(1)}=\ell^{-1}(0)$, this means that $f|_{\ell^{-1}(t)}$, with $t\ne0$, is a stable perturbation of $f_{(1)}$.
In particular, $m_{d-1}\medpar{f_{(1)},\mathcal{Q}}$ is  the number of critical points of a generic linear projection $p:\CC^{n+1}\to\CC$ restricted to $\mathcal Q\medpar{f|_{\ell^{-1}(t)}}=\mathcal Q(f)\cap \ell^{-1}(t)$. This number can be also seen as 
\begin{equation}\label{degd-1}
\deg\left(\ell: P_{d-1}\medpar{\overline{\mathcal Q(f)},\ell}\to(\CC,0)\right),
\end{equation}
where $P_{d-1}\medpar{\overline{\mathcal Q(f)},\ell}$ is the closure of the set of critical points of $(p,\ell)|_{\mathcal Q(f)}$.

On the other hand, $m_{d-i}\left(f,\mathcal{Q}\right)=m_0\medpar{P_{d-1}\medpar{\overline{\mathcal Q(f)}}}$. Since $P_{d-1}\medpar{\overline{\mathcal Q(f)}}$ is 1-dimensional and $\ell$ is generic, this is equal to
\begin{equation}\label{degd-2}
\deg\left(\ell: P_{d-1}\medpar{\overline{\mathcal Q(f)}}\to(\CC,0)\right),
\end{equation}
where $P_{d-1}\medpar{\overline{\mathcal Q(f)}}$ is again the closure of the set of critical points of $(p,\ell)|_{\mathcal Q(f)}$. So, \cref{degd-1,degd-2} are equal.
\qed
\newline
\end{proof}

We arrive to the main theorem, which characterises the Whitney equisingularity of a family of map germs in terms of the $\mu_I^*$-sequences of $f_t$ and $\medpar{D^2(f_t),\pi}$.

\begin{theorem}\label{equisingularidad}
Let $f_t\colon(\CC^n,S)\rightarrow (\CC^{n+1},0)$ be a one parameter family of $\eqA$-finite corank one map germs. Then, the family is Whitney equisingular if, and only if, the sequences $\mu_I^*(f_t)$ and $\tilde{\mu}_I^*\medpar{D^2(f_t),\pi}$ are constant on $t$.
\end{theorem}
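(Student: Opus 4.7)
The plan is to apply Gaffney's theorem from \cite{Gaffney1993}: an excellent family is Whitney equisingular if and only if all source and target polar multiplicities are constant in $t$. The main work is the sufficiency direction, which splits naturally into excellency, constancy of target polar multiplicities, and constancy of source polar multiplicities. Necessity is mostly routine: Whitney equisingularity gives excellency by definition, Thom's second isotopy lemma yields topological triviality (hence constancy of $\mu_I(f_t)$ and $\mu_D(f_t)$), and since generic transverse slices of a Whitney equisingular family remain Whitney equisingular, all image Milnor numbers in both successive slice sequences are constant.

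For sufficiency, step 1 is excellency: the main result of \cite{GimenezConejero2021} gives that constancy of $\mu_I(f_t)$ alone implies the family is excellent.

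Step 2 (target) applies the L\^e--Greuel formula \cref{LeGreuelM} to each successive slice pair, yielding
\[
\mu_I(f_{(i),t})+\mu_I(f_{(i+1),t})=\sum_{\mathcal Q}m_{d-i}(f_{(i),t},\mathcal Q)+c(f_{(i),t}).
\]
The left hand side is constant by the hypothesis on $\mu_I^*(f_t)$; each summand on the right is upper semi-continuous, so each is individually constant. Combining with \cref{lem:polar}, which identifies lower polar multiplicities of $f$ with top polar multiplicities of its slices, yields constancy of all target polar multiplicities $m_j(f_t,\mathcal Q)$ and of $c(f_t)$.

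Step 3 (source) begins by observing that \cref{prop: quitar mud} forces $\mu_D(f_t)=\mu_I\medpar{D^2(f_t),\pi}$ to be constant from the constancy of $\mu_I^*(f_t)$; combined with the hypothesis $\tilde{\mu}_I^*\medpar{D^2(f_t),\pi}$ constant, the full $\mu_I^*$-sequence of $\medpar{D^2(f_t),\pi}$ is constant. Since $\pi\colon D^2(f_t)\to\CC^n$ is corank one, $\eqA$-finite, with source--target dimension pair $(n-1,n)$, the same L\^e--Greuel argument of step 2 applies to $\pi$, giving constancy of all target polar multiplicities of $\pi$. The stratification of $\CC^n$ by stable types of $\pi$ coincides as sets with the source stratification of $f$ away from the open immersive stratum: each multi-germ stratum of $f$ of multiplicity $k\geq 2$ equals a target stratum of $\pi$ of multiplicity $k-1$, and the critical set $\Sigma(f)$ appears as the $\pi$-image of the diagonal part of $D^2(f)$. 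Since polar multiplicities depend only on the underlying analytic subset, constancy of target polar multiplicities of $\pi$ transfers to constancy of source polar multiplicities of $f$. Gaffney's theorem then concludes. The main obstacle is this last identification in step 3: one must check carefully that the target stratification of $\pi$ matches the source stratification of $f$ (including the diagonal contribution capturing $\Sigma(f)$) and that the same polar multiplicity is obtained regardless of whether the stratum is viewed through $f$ or through $\pi$, with additional bookkeeping needed in the low-dimensional case $n=2$, where $D^2(f_t)$ is one-dimensional and one uses the special variant of the L\^e--Greuel formula from \cref{sec:Le-Greuel}.
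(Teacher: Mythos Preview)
Your proof is correct and follows essentially the same route as the paper: excellency from the constancy of $\mu_I(f_t)$, then the L\^e--Greuel formula \cref{LeGreuelM} together with \cref{lem:polar} and upper semi-continuity to force constancy of target polar multiplicities, and the same argument transferred to $\pi\colon D^2(f_t)\to\CC^n$ for the source, after recovering $\mu_D(f_t)$ from \cref{prop: quitar mud}. The only minor differences are presentational: the paper obtains constancy of $c\medpar{{f_t}_{(i)}}$ by noting that each sliced family is itself excellent, whereas you fold it into the upper semi-continuity argument; and you spell out the identification of source strata of $f$ with target strata of $\pi$ (via the iteration principle) more explicitly than the paper, which simply asserts it.
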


\begin{proof}
Suppose that the sequences $\mu_I^*(f_t)$ and $\tilde{\mu}_I^*\medpar{D^2(f_t),\pi}$ are constant.
First of all, the constancy of $\mu_I(f_t)$ implies that the family is excellent (see \cite[Theorem 4.3]{GimenezConejero2021}). 
In fact, this holds not only for the family $f_t$, but also for all families ${f_t}_{(i)}$, $i=1,\dots,n-1$. In particular, all the numbers of 0-stable singularities $c\medpar{{f_t}_{(i)}}$ are constant (see \cite[Proposition 3.6]{Gaffney1993}).
By Gaffney's results in \cite[Theorems 7.1 and 7.3]{Gaffney1993}, we need to proof that all polar invariants in the source and target are constant.

By \cref{lem:polar}, the constancy of the polar multiplicities follows from the constancy of the top polar multiplicities of all the transverse slices ${f_t}_{(i)}$, with $i=1,\dots,n-1$. Secondly, we apply recursively \cref{LeGreuelM} on ${f_t}_{(i)}$ for $i=0,\dots,n-2$. For any $i$, we  have
$$\mu_I\medpar{{f_t}_{(i)}}+\mu_I\medpar{{f_t}_{(i+1)}}=\sum_{\textnormal{dim}\mathcal{Q}({f_t}_{(i)})=d>0}m_d\medpar{{f_t}_{(i)},\mathcal{Q}}+c\medpar{{f_t}_{\left(i\right)}}.$$
The polar multiplicities $m_d\medpar{{f_t}_{\left(i\right)},\mathcal{Q}}$ are upper semi-continuous (see \cite[Proposition 4.15]{Gaffney1993}). Therefore, all $m_d\medpar{{f_t}_{(i)},\mathcal{Q}}$ must be constant.

For the polar multiplicities in the source, we follow the same argument, but this time applied to the family $\medpar{D^2(f_t),\pi}$. Observe that the polar multiplicities of ${f_t}_{(i)}$ in the source coincide with the polar multiplicities of $\medpar{D^2({f_t}_{(i)}),\pi}$ in the target. Hence, we need to study $\mu_I^*\medpar{D^2({f_t}_{(i)}),\pi}$.
Moreover, it follows from \cref{prop: quitar mud} that
\[
\mu_D(f_t)+\mu_D\medpar{{f_t}_{(1)}}=\sum_{\mathcal Q_S} r(\mathcal Q)\textnormal{\texttt{\#}}\Sigma\medpar{p|_{\mathcal Q(f_s)}}.
\]
Since the right-hand side is constant, for all the members in the sum are either polar multiplicities in the target or numbers of $0$-stable invariants, so is the left-hand side. Again, the upper semi-continuity of $\mu_D$ (see \cref{uscontinuity}) implies that $\mu_D(f_t)$ is also constant. Hence, it is enough to consider the reduced sequence $\tilde{\mu}^*_I\medpar{D^2(f_t),\pi}$, as $\mu_D(f_t)=\mu_I\medpar{D^2(f_t),\pi}$.

Finally, the converse is easy. If the family $f_t$ is Whitney equisingular, so are the families ${f_t}_{(i)}$, for $i=1,\dots,n-1$. By Thom's second isotopy lemma, they are topologically trivial and, hence, their image Milnor numbers are constant (see  \cite[Corollary 2.10]{GimenezConejero2021}). Analogously, the family $\medpar{D^2(f_t),\pi}$ must be also Whitney equisingular, which gives the constancy of the sequence $\tilde{\mu}^*_I\medpar{D^2(f_t),\pi}$.
\qed
\newline
\end{proof}

The following example shows that $\tilde{\mu}^*_I\medpar{D^2(f_t),\pi}$, or even $\mu^*_I\medpar{D^2(f_t),\pi}$, is not enough to control Whitney equisingularity.

\begin{example}\label{ex: source no suficiente}
Consider the one parameter family $f_t:(\CC^2,0)\to(\CC^3,0)$ such that $$f_t(x,y)=(x,y^4,x^5y-5x^3y^3+4xy^5+y^6+ty^7).$$
This family is an example of topologically trivial family such that it is not Whitney equisingular, which was shown in \cite[Example 5.5]{Ruas2019}. 

In this example, we also have constancy of $\mu\medpar{D(f_t)}$. Therefore, the source is Whitney regular as $D(f_t)$ is a family of plane curves. In particular, all the multiplicities in the source and $\mu_I^*\medpar{D^2(f_t),\pi}$ are constant. However, neither the polar multiplicities in the target nor $\mu_I^*(f_t)$ are constant because the family is not Whitney equisingular (and by \cref{equisingularidad}).
\end{example}

%
%
%

The proof of \cref{equisingularidad} allows us to state a partial result when only the sequence $\mu_I^*(f_t)$ is constant. 

\begin{definition}
We say that the family $f_t$ is \textit{Whitney equisingular in the target} if there exists a representative of the unfolding $F:\mathcal{X}\to Y\times T$ as in \cref{def:WE} such that the stratification by stable types on $Y\times T$ is a Whitney stratification. 
\end{definition}

\begin{proposition}\label{prop: eq source}
Let $f_t\colon(\CC^n,S)\rightarrow (\CC^{n+1},0)$ be a one-parameter family of $\eqA$-finite corank one map germs. Then, the family is Whitney equisingular in the target if, and only if, the sequence $\mu_I^*(f_t)$ is constant on $t$.
\end{proposition}

\begin{example}
Related to \cref{ex: source no suficiente}, there are families $f_t$ that are Whitney equisingular in the target, but they are not Whitney equisingular. In \cite{NunoBallesteros2022}, Tomazella, Silva, and the second author prove that the family $f_t:(\CC^3,0)\to(\CC^4,0)$ such that $$f_t(x,y,z)=(x,y,z^5+xz,z^7+yz+tz^8)$$ has
\begin{align*}
	\mu_I^*(f_t)&=(25,16,10), \forall t,\\
	\tilde{\mu}_I\big(D^2(f_0),\pi\big)&=(65,9), \text{ for }t=0,\\\
	\tilde{\mu}_I\big(D^2(f_t),\pi\big)&=(65,8), \text{ for }t\neq0.
\end{align*}
This shows that the family is Whitney equisingular only in the target (by \cref{equisingularidad,prop: eq source}). This and \cref{ex: source no suficiente} enhance the importance of all the invariants of \cref{equisingularidad}.
\end{example}

As a corollary of \cref{equisingularidad}, and closing the topic we have started with \cref{ex: source no suficiente}, the Whitney equisingularity of a family $f_t:(\CC^2,S)\to(\CC^3,0)$ is controlled by just two invariants in the target. In fact, in \cite{Marar2014} it was shown that $f_t$ (of any corank) is Whitney equisingular if, and only if, $\mu\medpar{D(f_t)}$ and $\mu\medpar{\im({f_t}_{(1)})}$ are constant, where $\mu$ is the usual Milnor number of a plane curve.

\begin{corollary} Let $f_t\colon(\CC^2,S)\rightarrow (\CC^{3},0)$ be a one parameter family of $\eqA$-finite corank one map germs. Then, the family is Whitney equisingular if, and only if, $\mu_I(f_t)$ and $\mu_I\medpar{{f_t}_{(1)}}$ are constant on $t$.
\end{corollary}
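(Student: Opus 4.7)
The plan is to deduce this corollary directly from \cref{equisingularidad} by observing that the two $\mu_I^*$-sequences appearing there collapse in low dimensions. First, for a corank one germ $f\colon(\CC^2,S)\to(\CC^3,0)$, the $\mu_I^*$-sequence in \cref{sequences} has length $n=2$ and reduces to
\[
\mu_I^*(f)=\bigl(\mu_I(f),\,\mu_I(f_{(1)})\bigr),
\]
with $f_{(1)}$ the unique non-trivial transverse slice. Hence the constancy of the sequence $\mu_I^*(f_t)$ is precisely the constancy of the two invariants appearing in the statement.

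The crucial observation is that $\tilde\mu_I^*\medpar{D^2(f_t),\pi}$ is automatically constant in this setting. Indeed, by \cref{DICIS} the corank one hypothesis guarantees that $D^2(f_t)$ is an \textsc{icis} of dimension $n-1=1$, so the full sequence $\mu_I^*\medpar{D^2(f_t),\pi}$ consists of a single term $\mu_I\medpar{D^2(f_t),\pi}=\mu_D(f_t)$. Deleting this unique entry, as the notation $\tilde\mu_I^*$ prescribes, yields the empty sequence, whose constancy in $t$ holds vacuously, regardless of the behaviour of $\mu_D(f_t)$.

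Combining these two points and invoking \cref{equisingularidad} gives the desired equivalence: Whitney equisingularity of the family $f_t$ amounts to the constancy of $\mu_I(f_t)$ and $\mu_I\medpar{{f_t}_{(1)}}$. I do not foresee any substantive obstacle; the argument is essentially a dimension count, whose only content is to notice that the source condition in the main theorem becomes vacuous in the minimal case $n=2$, so that control of the target alone through these two image Milnor numbers already suffices.
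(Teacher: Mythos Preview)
Your argument is correct and matches the paper's approach: the corollary is stated there without proof, as an immediate consequence of \cref{equisingularidad}, and your dimension count is exactly the intended justification. The only point worth noting is that \cref{sequences} is formulated under the hypothesis $\dim X>1$, so for $\dim D^2(f_t)=1$ one is implicitly extending the definition to the natural one-term sequence before truncating; this is harmless and is what the paper has in mind.
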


\bibliographystyle{plain}
\bibliography{Mybib}
\end{document}